\newtheorem{definition}{Definition}[section]
\newtheorem{lemma}[definition]{Lemma}
\newtheorem{theorem}[definition]{Theorem}
\newtheorem{proposition}[definition]{Proposition}
\newtheorem{corollary}[definition]{Corollary}
\newtheorem*{theorem*}{Theorem}
\newtheorem*{corollary*}{Corollary}
\newtheorem*{lemma*}{Lemma}
\newcommand{\stwotwo}[4]{\left(\begin{smallmatrix}
    #1 & #2\\
    #3 & #4
\end{smallmatrix}\right)}
\newcommand{\twotwo}[4]{\begin{pmatrix} 
    #1 & #2 \\
    #3 & #4 
  \end{pmatrix}}
\DeclareMathOperator{\GL}{GL}
\DeclareMathOperator{\gldim}{gl \ dim}
\DeclareMathOperator{\jacrad}{rad}
\DeclareMathOperator{\rad}{rad}
\DeclareMathOperator{\Hom}{Hom}
\DeclareMathOperator{\End}{End}
\DeclareMathOperator{\diag}{diag}
\DeclareMathOperator{\row}{row}
\DeclareMathOperator{\Mod}{Mod}
\DeclareMathOperator{\charstc}{char}
\DeclareMathOperator{\Tr}{Tr}
\DeclareMathOperator{\fg}{fg}
\DeclareMathOperator{\dgend}{dg-End}
\DeclareMathOperator{\dghom}{dg-Hom}
\DeclareMathOperator{\per}{per}
\newcommand{\mb}[1]{\mathbb{#1}}
\newcommand{\ms}[1]{\mathscr{#1}}
\newcommand{\modcat}{{\operatorname{-Mod}}}
\newcommand{\funct}[1]{\mathrm{#1}}
\newcommand{\ind}{\funct{ind}}
\newcommand{\infl}{\funct{infl}}
\newcommand{\res}{\funct{res}}
\newcommand{\parind}{\funct{i}}
\newcommand{\phorind}{\funct{I}}
\newcommand{\coeffs}{R}
\newcommand{\triv}{\mathbbm{1}}
\newcommand{\indicate}{1}
\newcommand{\padicfield}{F}
\newcommand{\resfield}{k}
\newcommand{\resinv}{\resfield^{\times}}
\newcommand{\roi}{\mathcal{O}_{\padicfield}}
\newcommand{\finite}[1]{#1_f}
\newcommand{\oppos}[1]{\bar{#1}}
\newcommand{\alggp}[1]{\mathrm{#1}}
\newcommand{\padicgp}{G}
\newcommand{\redgp}{\alggp{\padicgp}}
\newcommand{\gln}{\alggp{GL}_{n}}
\newcommand{\glnp}{\gln(\padicfield)}
\newcommand{\fingp}{\finite{G}}
\newcommand{\iwa}{I}
\newcommand{\maxcomp}{K}
\newcommand{\prop}[1]{#1^1}
\newcommand{\propiwa}{\prop{\iwa}}
\newcommand{\propmax}{\prop{\maxcomp}}
\newcommand{\weyl}{W}
\newcommand{\torus}{T}
\newcommand{\roots}[1]{\mathtt{#1}}
\newcommand{\rootset}{\roots{P}}
\newcommand{\finroots}{\finite{\roots{S}}}
\newcommand{\finbor}{\finite{\iwa}}
\newcommand{\finuni}{\finite{\propiwa}}
\newcommand{\finuniopp}{\oppos{\finuni}}
\newcommand{\fintor}{\finite{\torus}}
\newcommand{\finweyl}{\finite{\weyl}}
\newcommand{\iwaupper}{\iwa_+}
\newcommand{\iwadiag}{\iwa_0}
\newcommand{\iwalower}{\iwa_-}
\newcommand{\iwaoppupp}{\oppos{\iwaupper}}
\newcommand{\hecke}{\mathsf{H}}
\newcommand{\hecgln}[1]{\hecke_{#1}(n)}
\newcommand{\hecglnco}{\hecgln{\coeffs}}
\newcommand{\hecglnq}{\hecgln{q}}
\newcommand{\schur}{\mathsf{S}}
\newcommand{\schgln}[1]{\schur_{#1}(n)}
\newcommand{\schglnco}{\schgln{\coeffs}}
\newcommand{\schglnq}{\schgln{q}}
\newcommand{\bigschur}{\schur_q(N,n)}
\newcommand{\alt}[1]{\hat{#1}}
\newcommand{\altschur}{\alt{\schur}_q(N,n)}
\newcommand{\smrep}{\Mod(\padicgp)}
\newcommand{\hecglob}{\hecke(\padicgp)}
\newcommand{\block}{\mathcal{B}}
\newcommand{\uniblock}{\block_{1}(\padicgp)}
\newcommand{\unihec}{\hecke_{1}(\padicgp)}
\newcommand{\annihilator}{\mathcal{I}}
\newcommand{\anniblock}{\block_{1}'(\padicgp)}
\newcommand{\annihec}{\hecke_{1}'(\padicgp)}
\newcommand{\heccomp}{\hecke(\maxcomp)}
\newcommand{\finrep}{\Mod(\fingp)}
\newcommand{\hecfin}{\hecke(\fingp)}
\newcommand{\uniblockfin}{\block_{1}(\fingp)}
\newcommand{\hecfinuni}{\hecke_{1}(\fingp)}
\newcommand{\finanni}{\finite{\annihilator}}
\newcommand{\annigen}{Q}
\newcommand{\ourgen}{V}
\newcommand{\unigen}{\Gamma}
\newcommand{\project}{P}
\newcommand{\funig}{\finite{\unigen}}
\newcommand{\fourg}{\finite{\ourgen}}
\newcommand{\fannig}{\finite{\annigen}}
\newcommand{\finproj}{\finite{\project}}
\newcommand{\derivedcat}{D}
\newcommand{\homtpycat}{K}
\newcommand{\bounder}{{\derivedcat}^b}
\newcommand{\dermod}[1]{\derivedcat(#1)}
\newcommand{\htpymod}[1]{\homtpycat(#1)}
\newcommand{\bdermod}[1]{\bounder(#1_{\fg})}
\newcommand{\cx}[1]{#1^{\bullet}}
\newcommand{\gencx}{\cx{\ourgen}}
\newcommand{\homlgy}{H}
\newcommand{\homlgycx}{\cx{\homlgy}}
\newcommand{\nhomlgy}{\homlgy^n}
\title{The Derived Unipotent Block of $p$-Adic $\GL_2$ as Perfect Complexes over a dg Schur Algebra}
\author{Rose Berry}
\begin{document}

\maketitle

\begin{abstract}
    For a $p$-adic field $F$ of residual cardinality $q$, we provide a triangulated equivalence between the bounded derived category $\bdermod{\uniblock}$ of finitely generated unipotent representations of $\padicgp=\GL_2(F)$ and perfect complexes over a dg enriched Schur algebra, in the non-banal case of odd characteristic $l$ dividing $q+1$. The dg Schur algebra is the dg endomorphism algebra of a projective resolution of a direct sum $V$ of the parahoric inductions of the trivial representations of the reductive quotients of $\padicgp$, and $V$ is shown to be a classical generator of $\bdermod{\uniblock}$.
\end{abstract}

\section{Introduction}

Let $\padicfield$ be a non-archimedean local field with residue field $\resfield$, of characteristic $p$ and cardinality $q$, and let $\padicgp$ be the $F$-points of a reductive algebraic group over $\padicfield$.

By \cite{bernstein1984centre}, the category of smooth complex representations of $\padicgp$ is known to decompose into blocks, which are parameterised by inertial supercuspidal support. Furthermore, in almost all cases the structure of these blocks is known via the theory of types(\cite{adler2024structure, adler2024reduction}). Write $\uniblock$ for the unipotent block, that is, the block containing the trivial representation. For $\padicgp=\glnp$, it was shown, first in \cite{bushnell1999types} and expanded on in \cite{dat2017functoriality}, that each block of $\padicgp$ is equivalent to some $\block_1(H)$, where $H$ is a finite product of general linear groups over finite extensions of $\padicfield$. Furthermore, $\uniblock$ is equivalent to the category of modules over the Iwahori-Hecke algebra $\hecglnco$, which has an explicit description in terms of generators and relations, and whose modules are well-understood.

A natural question to ask is how much of this structure passes to the category of smooth $R$-representations for $R$ an algebraically closed field of characteristic $l$, which we take to be different from $p$. In the case of $l$ banal, that is, not dividing the order of the reductive quotient of any parahoric subgroup, all the same results hold. However, in non-banal characteristic much less is known.

It is shown in \cite{vigneras1998induced} that, for $\padicgp=\gln(\padicfield)$, there is the same decomposition into blocks (this is however known to fail for other choices of $\padicgp$ - see \cite{dat2018simple}). Furthermore, there is the same reduction of blocks to the case of $\uniblock$ for some product of $\gln(\padicfield)$ (see \cite{dat2018equivalences}, \cite{chinello2018blocks}). However, the equivalence of $\uniblock$ with the category of modules over $\hecglnco$ fails. Despite this, some partial results are still known:

Vign\'eras defines in \cite{vigneras2003schur} a subcategory $\anniblock$ of $\uniblock$ given by the representations annihilated by a certain ideal $\annihilator$ of the global Hecke algebra $\hecglob$ of $\padicgp$. She shows that $\anniblock$ is equivalent to the category of modules over an algebra closely related to $\hecglnco$, the Schur algebra $\schglnco$. Furthermore, some power of $\annihilator$ annihilates $\uniblock$. Vign\'eras achieves this by giving an explicit progenerator $\annigen$ of the subcategory whose endomorphisms give $\schglnco$, but also observes that $\schglnco$ is the endomorphisms of a much simpler representation $\ourgen$, whose annihilator is $\annihilator$.

We seek to improve this state of affairs at the derived level. Write $\bdermod{\uniblock}$ for the bounded derived category of finitely generated unipotent representations, and for a triangulated category $\mathcal{T}$ with object $G$ write $\langle G\rangle_{\mathcal{T}}$ for the subcategory classically generated by $G$. We obtain the following main theorem:

\begin{theorem*}[Theorems 6.8 and 6.10]
    Suppose that $n=2$ and $l$ odd dividing $q+1$. Then $\bdermod{\uniblock}=\langle \annigen\rangle_{\bdermod{\uniblock}}=\langle \ourgen\rangle_{\bdermod{\uniblock}}$.
\end{theorem*}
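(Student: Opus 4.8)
The plan is to route everything through Vign\'eras's equivalence $\anniblock\simeq\schglnco\modcat$ recalled above, fed by two structural inputs already available: some power $\annihilator^{m}$ of $\annihilator$ annihilates all of $\uniblock$, and the Schur algebra $\schglnco$ has finite global dimension (it is quasi-hereditary, as every $q$-Schur algebra is, independently of the specialisation of the parameter; here $q\equiv-1$ modulo $l$). I would first prove $\bdermod{\uniblock}=\langle\annigen\rangle_{\bdermod{\uniblock}}$ and then deduce $\langle\annigen\rangle_{\bdermod{\uniblock}}=\langle\ourgen\rangle_{\bdermod{\uniblock}}$, which together give the claim.

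For the first equality, $\langle\annigen\rangle_{\bdermod{\uniblock}}\subseteq\bdermod{\uniblock}$ is automatic since $\annigen$, a progenerator of $\anniblock$, lies in $\uniblock_{\fg}$; the content is the reverse inclusion, proved in two steps. (i) Every object of $\anniblock_{\fg}$ lies in $\langle\annigen\rangle_{\bdermod{\uniblock}}$: transporting through $\anniblock\simeq\schglnco\modcat$, which sends $\annigen$ to the regular module and preserves finite generation ($\annigen$ being finitely generated projective), finiteness of $\gldim\schglnco$ gives each object of $\anniblock_{\fg}$ a finite resolution with terms in $\operatorname{add}\annigen$; since $\anniblock$ is a full abelian subcategory of $\uniblock$ (closed under sub- and quotient objects), this resolution stays exact in $\uniblock$, so in $\bdermod{\uniblock}$ the object is isomorphic to a bounded complex over $\operatorname{add}\annigen$ and hence lies in $\langle\annigen\rangle_{\bdermod{\uniblock}}$. (ii) D\'evissage: for $M\in\uniblock_{\fg}$ the chain $M\supseteq\annihilator M\supseteq\annihilator^{2}M\supseteq\cdots\supseteq\annihilator^{m}M=0$ is finite because $\annihilator^{m}$ kills $\uniblock$, every subquotient is annihilated by $\annihilator$ and is finitely generated by Noetherianity of $\uniblock_{\fg}$ (standard for blocks of $\glnp$), hence lies in $\anniblock_{\fg}$; the associated triangles in $\bdermod{\uniblock}$ present $M$ as an iterated extension of such objects, so $M\in\langle\annigen\rangle_{\bdermod{\uniblock}}$ by (i).

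For the second equality, $\ourgen$ has annihilator exactly $\annihilator$, so $\ourgen\in\anniblock$, and it is finitely generated (a finite sum of parahoric inductions of trivial characters, each a cyclic compact induction), so $\ourgen\in\anniblock_{\fg}\subseteq\langle\annigen\rangle_{\bdermod{\uniblock}}$ by (i), whence $\langle\ourgen\rangle_{\bdermod{\uniblock}}\subseteq\langle\annigen\rangle_{\bdermod{\uniblock}}$. The reverse inclusion is $\annigen\in\langle\ourgen\rangle_{\bdermod{\uniblock}}$; the exact functor $\bdermod{\anniblock}\to\bdermod{\uniblock}$ carries the thick subcategory generated by $\ourgen$ into $\langle\ourgen\rangle_{\bdermod{\uniblock}}$ and fixes $\annigen$ and $\ourgen$, so it suffices to show $\ourgen$ classically generates $\bdermod{\anniblock}$. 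Passing to $\anniblock\simeq\schglnco\modcat$, $\annigen$ becomes the regular module and $\ourgen$ becomes $\Hom_{\uniblock}(\annigen,\ourgen)$, and the crux is that the latter is a full tilting $\schglnco$-module (i.e., every indecomposable tilting occurs as a summand), which I expect to read off from the explicit description of $\ourgen$ in terms of parahoric inductions together with the known tilting modules of the Schur algebra. Granting this, $\schglnco$, being projective, has a $\Delta$-filtration and hence a finite coresolution by tilting modules, so the regular module lies in $\langle\Hom_{\uniblock}(\annigen,\ourgen)\rangle$; translating back gives $\annigen\in\langle\ourgen\rangle_{\bdermod{\uniblock}}$, and all three categories coincide.

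The main obstacle, and the only step not reducible to Vign\'eras's theorem plus standard quasi-hereditary and tilting theory, is precisely the identification of $\Hom_{\uniblock}(\annigen,\ourgen)$ with a full tilting module over $\schglnco$ in the last paragraph. Since $n=2$ and $l\mid q+1$ make $\schglnco$ very small, I would most likely sidestep the general machinery and do this by hand: write down $\annigen$, $\ourgen$, and the simple and indecomposable projective objects of $\anniblock$, and exhibit explicit short complexes in $\operatorname{add}\ourgen$ resolving each summand of $\annigen$ (and, for safety, conversely). A secondary point requiring a reference rather than a computation is the Noetherianity of $\uniblock_{\fg}$ used in the d\'evissage.
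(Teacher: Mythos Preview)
Your argument for $\bdermod{\uniblock}=\langle\annigen\rangle$ matches the paper's: d\'evissage along the $\annihilator$-adic filtration reduces to $\anniblock_{\fg}$, and finite global dimension of $\schglnco$ puts every such object in $\langle\annigen\rangle$. One correction, however: the $\schglnco$ of this paper is the \emph{affine} Schur algebra (endomorphisms over the Iwahori--Hecke algebra of the $p$-adic group), which is infinite-dimensional and not quasi-hereditary in the classical sense. Your parenthetical ``every $q$-Schur algebra is quasi-hereditary'' applies to the finite $q$-Schur algebra, not this one. The paper obtains finite global dimension instead by exhibiting an idempotent affine cellular structure, so the input you need is available, just not for the reason you give.

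For $\langle\annigen\rangle=\langle\ourgen\rangle$ your route diverges, and the same affine/finite confusion creates a genuine gap. Your tilting argument requires a quasi-hereditary structure, $\Delta$-filtrations, and a full tilting module over $\schglnco$, none of which are available off the shelf for the affine Schur algebra; setting up an affine highest-weight theory and identifying the image of $\ourgen$ within it would be substantial additional work. Your ``by hand'' fallback, resolving summands of $\annigen$ by complexes in $\operatorname{add}\ourgen$ inside $\schglnco\modcat$, is an infinite-dimensional computation with no evident finite truncation.

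The paper avoids this by routing through the finite reductive quotient $\fingp=\GL_2(\resfield)$, where the unipotent block has cyclic defect and everything is explicit. There one has finite analogues $\fourg=\triv\oplus P_1$ and $\fannig=P_1\oplus\rad(P_1)$, and the tautological sequence $0\to\rad(P_1)\to P_1\to\triv\to 0$ interlocks their summands; exact parahoric induction then gives $\langle\ourgen\rangle=\langle\phorind(\fannig)\rangle$. The remaining and hardest step is that $\annigen=\unigen/\annihilator\unigen$ is a direct summand of $\phorind(\fannig)=\phorind(\funig/\finanni\funig)$, which follows once $\hecglob\finanni\subseteq\annihilator$. The paper proves this by explicitly identifying a single central generator of the unipotent part of $\finanni$ (built from two specific conjugacy classes in $\fingp$) and verifying by a coset computation over the affine Weyl group that it annihilates $\project=\ind_I^G\triv$. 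So the short complexes you anticipate do exist and are indeed very short, but they are produced at the finite level and lifted; the crux is comparing the finite and $p$-adic annihilators, not anything internal to $\schglnco$.
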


We may immediately improve this via the theory of dg algebras. Let $\gencx$ be a projective resolution of $\ourgen$ in $\smrep$, and write $\dgend$ for the dg endomorphism algebra of a complex and $\per$ for the perfect complexes over a dg algebra.

\begin{corollary*}[Corollary 6.11]
    Under the same hypotheses as the theorem, there is a triangulated equivalence $\bdermod{\uniblock}\simeq\per(\dgend(\gencx))$.
\end{corollary*}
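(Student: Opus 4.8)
The plan is to deduce the corollary from the Theorem by the dg-enhanced Morita formalism of Keller; the triangulated content is already contained in the Theorem, so the remaining task is to set up the dg algebra correctly and to mediate between the bounded derived category of finitely generated modules appearing in the Theorem and the unbounded derived categories over which the natural comparison functor lives. I would begin by fixing an algebraic model for the block: since $\uniblock$ is a direct factor of $\smrep$, it is equivalent to the category of modules over a ring $A$ — one may take $A = \unihec$ — in a manner that identifies the finitely generated representations with the finitely generated $A$-modules, and $A$ is Noetherian; under this identification $\ourgen$ becomes a finitely generated $A$-module and $\bdermod{\uniblock}$ the bounded derived category of finitely generated $A$-modules. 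Because $\dgend(\gencx)$ depends, up to quasi-isomorphism of dg algebras, only on $\ourgen$ and not on the chosen resolution (any two projective resolutions being homotopy equivalent; and, $\uniblock$ being a direct factor, a resolution in $\smrep$ being homotopy equivalent to one whose terms are projective in $\uniblock$, hence also in $\smrep$), we may assume $\gencx$ is a right-bounded complex of finitely generated projective $A$-modules, in particular K-projective. Set $B := \dgend_A(\gencx)$; then $B \simeq \dgend(\gencx)$ and $B$ is a dg algebra with $\homlgy^n(B) \cong \operatorname{Ext}^n_A(\ourgen, \ourgen)$ for all $n$.

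Next I would invoke the standard dg Morita statement. Using the natural bimodule structure on $\gencx$, the derived tensor functor $\Phi := (-) \otimes^{\mathbb{L}}_B \gencx \colon \dermod{B} \to \dermod{A}$ carries $B$ to $\gencx$; since $\gencx$ is K-projective, $\operatorname{RHom}_A(\gencx, \gencx) \simeq B$, and a standard d\'evissage then shows that $\Phi$ is fully faithful on the subcategory $\per(B)$ of perfect dg $B$-modules with essential image the thick subcategory of $\dermod{A}$ generated by $\gencx$. As $\gencx \cong \ourgen$ in $\dermod{A}$, this gives $\per(\dgend(\gencx)) \simeq \per(B) \simeq \langle \ourgen \rangle_{\dermod{A}}$, and it remains to identify $\langle \ourgen \rangle_{\dermod{A}}$ with $\bdermod{\uniblock}$. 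Here Noetherianity of $A$ re-enters: the canonical functor $\bdermod{\uniblock} \to \dermod{A}$ is fully faithful with essential image the full subcategory of complexes with bounded, finitely generated cohomology, and that subcategory is thick in $\dermod{A}$ (a direct summand of such a complex is again one). Hence the thick closure of $\ourgen$ is the same formed in $\dermod{A}$ or in $\bdermod{\uniblock}$, so $\langle \ourgen \rangle_{\dermod{A}} = \langle \ourgen \rangle_{\bdermod{\uniblock}}$, which by the Theorem is all of $\bdermod{\uniblock}$. Composing the equivalences yields $\bdermod{\uniblock} \simeq \per(\dgend(\gencx))$.

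The dg formalism is a black box and not, I expect, where the difficulty lies; the genuine work is the finiteness-and-boundedness bookkeeping — confirming that the unipotent block is governed by a Noetherian ring, so that $\bdermod{\uniblock}$ sits inside the unbounded derived category as a thick subcategory and the classical generator supplied by the Theorem can be fed into Morita theory, and verifying that moving a projective resolution between $\smrep$ and $\uniblock$ changes nothing. It is also worth stating explicitly, since it both underlies the argument and is what renders the corollary's statement unambiguous, that $\dgend(\gencx)$ is independent of the chosen projective resolution of $\ourgen$ up to quasi-isomorphism of dg algebras, and therefore $\per(\dgend(\gencx))$ is independent of it up to triangulated equivalence.
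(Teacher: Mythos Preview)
Your proposal is correct and follows essentially the same route as the paper: both deduce the corollary from the Theorem by feeding the classical generator $\ourgen$ into Keller's dg Morita formalism. The paper packages the dg-algebraic step into a separate black-box statement (its Theorem~6.5, which cites Keller's Theorem~3.8(b) directly and checks that $\bdermod{\unihec}$, as a full triangulated subcategory of $\dermod{\unihec}$ closed under direct summands, is algebraic and idempotent-complete), whereas you unpack the same machinery explicitly via the functor $(-)\otimes^{\mathbb{L}}_B \gencx$ and the identification of thick closures in $\dermod{A}$ and $\bdermod{\uniblock}$; your additional remarks on Noetherianity and on independence of the chosen resolution are points the paper leaves implicit.
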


Due to the relative simplicity of $\ourgen$, its projective resolution $\gencx$ can be described explicitly, and future work of the author seeks to give an explicit description of its dg endomorphisms.

To establish that $\annigen$ classically generates $\bdermod{\uniblock}$, we use the results of \cite{dat2009finitude} to show that the categories under consideration are Noetherian, and extend a result of \cite{deng2016affine} to positive characteristic to show the following:

\begin{lemma*}[Theorem 3.15]
    For any $n$ and any $l\neq p$, the Schur algebra $\schglnco$ has finite global dimension.
\end{lemma*}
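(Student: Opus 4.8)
Following Vign\'eras, $\schglnco=\End_{\padicgp}(V)$ where $V$ is the direct sum of the parahoric inductions described in the introduction; concretely it is (a version of) the affine $q$-Schur algebra over $\coeffs$ attached to $\GL_n(F)$, an infinite-dimensional algebra closely related to the affine Iwahori--Hecke algebra $\hecglnco$. One cannot hope to reduce the claim to the global dimension of $\hecglnco$ itself, which is typically \emph{infinite} once $l$ is non-banal: already for $n=2$ and $l\mid q+1$ the finite Hecke algebra degenerates to $\coeffs[T]/(T+1)^2$, a local algebra of infinite global dimension over which $\hecglnco$ is free. Instead the plan is threefold: (i) realise $\schglnco$ by base change from an integral model $\schur_{\mathcal A}$ over $\mathcal A=\mathbb Z[v,v^{-1}]$ (or its localisation at $l$); (ii) refine the characteristic-zero analysis of \cite{deng2016affine} to produce, already over $\mathcal A$, a finite chain of idempotent ideals $0=I_0\subsetneq I_1\subsetneq\cdots\subsetneq I_m=\schur_{\mathcal A}$ whose successive layers are homologically controlled, with all the ambient data (idempotents, standard modules, subquotients) being $\mathcal A$-free; (iii) specialise $v^2\mapsto q$ — legitimate since $q\in\coeffs^{\times}$ and $\coeffs$ is flat over $\mathcal A$ — so that the chain, the $\Delta$-filtrations of the projectives and the required $\mathrm{Ext}$-vanishing all descend, and then apply a general global-dimension bound for stratified algebras.

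For the last point, each layer $I_j/I_{j-1}$ is governed by a subquotient algebra $B_j$ which, exactly as in the classical case, is up to Morita equivalence a polynomial (or Laurent-polynomial) extension of a \emph{non-affine} $q$-Schur algebra over $\coeffs$, the extra variables being the "loop" directions distinguishing the affine from the finite setting. Each such $B_j$ has finite global dimension: the $q$-Schur factor because it is quasi-hereditary over every field (Parshall--Wang, Dipper--James), and the polynomial extension adds only a bounded amount by Hilbert's syzygy theorem. A standard homological estimate for an algebra filtered by idempotent (stratifying) ideals then bounds $\gldim\schglnco$ linearly in the length $m$ of the chain and in $\max_j\gldim B_j$, giving $\gldim\schglnco<\infty$ uniformly in $\coeffs$ with $l\neq p$.

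The main obstacle is step (ii): the stratification of \cite{deng2016affine} is built over a characteristic-zero field, and it must be upgraded to an \emph{integral} statement in which the idempotent ideals, the standard modules and the layer algebras are $\mathcal A$-free, i.e.\ acquire no torsion upon reduction modulo $l$. Exactly this is what makes $-\otimes_{\mathcal A}\coeffs$ exact on the relevant resolutions and keeps intact both the $\Delta$-filtrations of the projective modules and the vanishing $\mathrm{Ext}^{>0}$ between standard and costandard modules; granted that integral refinement, the descent to characteristic $l$ and the resulting global-dimension bound are formal. I expect this step to demand a careful rerun of Deng's constructions with constant attention to $\mathcal A$-freeness, rather than a new homological idea.
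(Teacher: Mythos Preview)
Your overall architecture—stratify by a chain of idempotent ideals coming from \cite{deng2016affine}, base-change from an integral form, then invoke a Koenig--Xi-type finiteness criterion—is exactly the paper's route. But two points of the proposal are off, and they misidentify where the actual work lies.

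First, the ``main obstacle'' you flag is not one. The affine cellular structure on the two-parameter algebra $\bigschur$ in \cite{cui2016affine} and \cite{deng2016affine} is \emph{already} constructed over $\ms{Z}=\mb{Z}[q^{\pm 1/2}]$: the cell ideals $J_k'$, the free modules $V_k$, and the rings $B_k$ are $\ms{Z}$-free by construction. No ``careful rerun attending to $\mathcal A$-freeness'' is needed; base change to $\coeffs$ is formal (this is \cite{koenig2012affine}, Lemma~2.4). What is \emph{not} in the literature for general $q$ is the \emph{idempotence} of the cell ideals in the sense required by Koenig--Xi—Deng--Yang verify $J_k'^2=J_k'$ and exhibit the idempotents $l_k$, but only draw the conclusion for $q=1$—together with the passage from $\bigschur$ down to $\schglnq=e\,\bigschur\,e$. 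Those two short steps (checking that the $l_k$ are idempotent and lie in $\schglnq$, and that $e$ is fixed by the involution so that $eJ_k'e$ inherits the structure) are the paper's actual contribution in this section, not an integral refinement.

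Second, your description of the layer algebras is incorrect. In the affine cellular structure the $B_k$ are \emph{commutative}: explicitly they are localised polynomial rings $\ms{Z}[X_1,\dots,X_{m_k},X_{i_1}^{-1},\dots,X_{i_{n_k}}^{-1}]$. They are not polynomial extensions of non-affine $q$-Schur algebras, so the appeal to quasi-heredity of finite Schur algebras (Parshall--Wang, Dipper--James) is beside the point. After specialising to the field $\coeffs$ one has $\rad(B_k)=0$ (they are integral domains) and $\gldim(B_k)<\infty$ by Hilbert's syzygy theorem alone, and then the Koenig--Xi criterion (\cite{koenig2012affine}, Theorem~4.4) yields $\gldim(\schglnco)<\infty$ directly.
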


To show an equivalence between the categories generated by $\annigen$ and $\ourgen$ in the case $n=2$ and $l$ odd dividing $q+1$, we use that the unipotent block $\uniblockfin$ of the finite reductive quotient $\fingp=\gln(\resfield)$ has cyclic defect group in this characteristic, and that it contains certain finite analogues $\fannig$ and $\fourg$ of $\annigen$ and $\ourgen$. This allows us to use results of \cite{ackermann2006loewy} to describe explicitly the full structure of simple and projective modules of $\uniblockfin$, and to describe $\fannig$ and $\fourg$ in terms of these. Thus we may show the following:

\begin{lemma*}[Lemma 4.13]
    $\langle \fannig\rangle_{\bdermod{\uniblockfin}}=\langle \fourg\rangle_{\bdermod{\uniblockfin}}$.
\end{lemma*}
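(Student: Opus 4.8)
The plan is to identify the two modules $\fannig$ and $\fourg$ explicitly as $\uniblockfin$-modules and then to deduce the equality of the generated subcategories from a single short exact sequence; throughout, $\langle-\rangle$ denotes the thick subcategory of $\bdermod{\uniblockfin}$ classically generated by the indicated objects. Write $S=\overline{\triv}$ for the (simple) reduction of the trivial representation, $P$ for its projective cover in $\uniblockfin$, and $S'$ for the other simple module. I claim that, up to multiplicities of indecomposable summands, $\fourg\cong P\oplus S$ and $\fannig\cong P\oplus\rad P$, so that $\langle\fourg\rangle_{\bdermod{\uniblockfin}}=\langle P,S\rangle$ and $\langle\fannig\rangle_{\bdermod{\uniblockfin}}=\langle P,\rad P\rangle$. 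Granting this, the short exact sequence $0\to\rad P\to P\to S\to0$ gives a distinguished triangle $\rad P\to P\to S\to\rad P[1]$ in $\bdermod{\uniblockfin}$, and since $P$ is a common summand of $\fannig$ and $\fourg$ this triangle places $S$ in $\langle P,\rad P\rangle$ and $\rad P$ in $\langle P,S\rangle$, whence $\langle P,S\rangle=\langle P,\rad P\rangle$ and the lemma follows. (The short exact sequence $0\to S\to\rad P\to S'\to0$ then also puts $S'$ in this subcategory, so both sides in fact equal $\bdermod{\uniblockfin}=\langle S\oplus S'\rangle$, which is worth recording.)

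The real content is the two identifications, so first I would pin down the module theory of $\uniblockfin$. Since $l$ is odd and divides $q+1$, the block $\uniblockfin$ has cyclic defect group of order $l^{a}:=(q+1)_{l}$ and is therefore a Brauer tree algebra; counting the unipotent characters $\triv,\mathrm{St}$ together with the cuspidal characters of $\fingp=\glnf$ lying in the principal block shows its tree is a path with two edges, one end being an exceptional vertex of multiplicity $(l^{a}-1)/2$. Because $\overline{\triv}$ is simple the vertex $\triv$ is a leaf, so $\mathrm{St}$ is the interior vertex and the tree is $\triv-\mathrm{St}-(\text{exceptional})$. Feeding this into the Loewy-series descriptions of \cite{ackermann2006loewy} gives that $P$ is uniserial of length three with radical layers $S,S',S$ from the top; thus $\rad P$ is the uniserial module $[\,S'\mid S\,]$, a reduction of $\mathrm{St}$, and one also reads off that the relevant $\Hom$ and $\operatorname{Ext}^{1}$ spaces among $S,S',\rad P,P$ are at most one-dimensional, so the two short exact sequences above are the unique non-split ones.

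Next I would identify the two modules. For $\fourg$: each parahoric-induction summand of $\ourgen$ has as its finite analogue an induction $\ind_{Q}^{\fingp}\triv$ from a parabolic subgroup $Q$ of $\fingp$, and for $\fingp=\GL_2(k)$ the only options are $Q=B$ (a Borel) and $Q=\fingp$. Now $\ind_{B}^{\fingp}\triv$ is projective because $l\nmid|B|$, and its endomorphism algebra is the rank-one Hecke algebra $k[T]/\bigl((T-q)(T+1)\bigr)$, which modulo $l$ becomes $k[T]/(T+1)^{2}$, a local ring; hence $\ind_{B}^{\fingp}\triv$ is an indecomposable projective, and as it surjects onto $\triv$ it is the projective cover $P$ of $S$. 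The summand $\ind_{\fingp}^{\fingp}\triv$ is $S$, so $\fourg\cong P\oplus S$ up to multiplicities. For $\fannig$: by Vign\'eras's construction $\fannig$ is a progenerator of the subcategory of modules in $\uniblockfin$ annihilated by $\finanni$, a category Morita equivalent to the module category of the Schur algebra $\schglnco$, so up to multiplicities $\fannig$ is the sum of the two indecomposable projectives of that quotient algebra. At the relevant specialisation ($n=2$, quantum characteristic $e=2$) the Schur algebra is quasi-hereditary with Weyl modules of lengths $1$ and $2$ and decomposition matrix $\stwotwo{1}{1}{0}{1}$, so its indecomposable projectives are uniserial of lengths $3$ and $2$. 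Since $\ind_{B}^{\fingp}\triv=P$ is annihilated by $\finanni$ (the finite analogue of Vign\'eras's $\ind_{\iwa}^{\padicgp}\triv$, which her ideal kills) and has length three, it is the length-three one; the length-two projective then has top $S'$ and socle $S$, hence equals $\rad P$. Therefore $\fannig\cong P\oplus\rad P$ up to multiplicities, and the argument of the first paragraph finishes the proof.

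The step I expect to be the main obstacle is the identification of $\fannig$: matching Vign\'eras's abstractly defined progenerator with the two concrete ``truncated'' projectives requires the quasi-hereditary structure of the specialised Schur algebra at $e=2$ together with care about Morita equivalence versus ring isomorphism, and pinning down the orientation of the Brauer tree also relies on the structural input recalled above. Once these are in place the triangulated part of the argument is immediate.
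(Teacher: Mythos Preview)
Your overall plan matches the paper exactly: identify $\fourg\cong P_1\oplus\triv$ and $\fannig\cong P_1\oplus\rad P_1$, then use the short exact sequence $0\to\rad P_1\to P_1\to\triv\to0$ to conclude that each classically generates the other. In the paper this is Corollary~4.13, stated precisely as the existence of that sequence with the indicated summands. Your identification of $\fourg$ is also essentially the paper's (Lemma~4.4 computes a two-dimensional endomorphism algebra for $\ind_{\finbor}^{\fingp}\triv$ and deduces indecomposability), though in fact $\fourg$ is \emph{defined} in the paper as $\triv\oplus P_1$, so nothing needs identifying there.

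Where you diverge, and where there is a genuine gap, is the identification of $\fannig$. You appeal to a Morita equivalence with $\schglnco$, but in this paper $\schglnco$ is the \emph{affine} Schur algebra, an infinite-dimensional object; no quotient of the finite group algebra can be Morita equivalent to it. Even replacing this by a finite $q$-Schur algebra, you would still need a finite-group version of Vign\'eras's progenerator theorem (the paper only cites the $p$-adic statement, Theorem~2.3) and then an identification of that algebra's projectives with the concrete modules $P_1$ and $\rad P_1$, neither of which is immediate. The paper instead bypasses all of this by a direct computation inside the quiver presentation of $\uniblockfin$ (Corollary~4.2): it shows in Lemma~4.11 that the unipotent part of $\finanni$ is the two-sided ideal generated by the loop $\gamma$ at $P_2$, and then in Lemma~4.12 computes $\funig/\finanni\funig$ summand by summand --- $P_1$ survives unchanged since $\finanni P_1=0$, the single copy of $P_2$ becomes $P_2/\gamma P_2\cong\rad P_1$, and everything non-unipotent dies. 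This is short and self-contained once one knows that $\funig$ contains exactly one $P_1$ and one $P_2$ in the unipotent block (Lemmas~4.5 and~4.7), and it avoids any appeal to Schur-algebra structure theory.
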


We then attempt to lift this equivalence into the $p$-adic setting. However, a barrier occurs, as $\annigen$ is not parahorically induced from $\fannig$. Resolving this amounts to showing the following property of the annihilator $\finanni$ of $\fourg$, which is a 2-sided ideal of the finite group algebra and hence may be viewed as a representation of $\fingp$:

\begin{lemma*}[Lemma 5.4]
    $\phorind_{\fingp,\maxcomp}^{\padicgp}\finanni\subseteq\annihilator$.
\end{lemma*}

We show this by giving a full and explicit description of $\finanni$ and showing it annihilates $\ourgen$. To get this explicit description, we again use the structure of $\uniblockfin$ in terms of projectives, together with results from \cite{paige2014projective} giving explicit formulae for their endomorphisms. Thus the final proof rests on a long explicit calculation in the global Hecke algebra of the $p$-adic group.

The main theorem and its corollary allow us to describe $\bdermod{\uniblock}$ in terms of a dg algebra whose zeroth cohomology is $\schglnco$. The current work of the author is in describing this dg algebra, up to quasi-isomorphism, explicitly, to enable an explicit description of its perfect complexes and thus of $\bdermod{\uniblock}$.

The methods of this paper, specifically the second and third lemmas, rely on explicit knowledge of the structure of $\uniblockfin$. This structure is known in more generality than just $n=2$ and $l$ odd dividing $q+1$: it is known whenever $\uniblockfin$ has cyclic defect group. We thus expect this method to generalise to this broader case.

Furthermore, the Schur algebra can be defined for any $\padicgp$, and it seems likely that the proof of the first lemma could extend to this case. The generators $\ourgen$, $\annigen$, $\fourg$, and $\fannig$ can all be defined in this generality, and one might hope that the other two lemmas hold in this case too, though the method in this paper is unlikely to extend. Should a more general method be found, the main theorem could then be extended to other cases where the block decomposition is known to hold.

\subsection{Organisation of the Paper}

\cref{preliminaries} establishes our notation and recalls the relevant results of \cite{vigneras2003schur} and \cite{dat2009finitude} that we will base our results on. In \cref{schur algebra structure} we will recall the notion of an affine cellular algebra, and extend the work of \cite{deng2016affine} to give an affine cellular structure on $\schglnco$ that satisfies additional idempotence properties, which imply that $\schglnco$ has finite global dimension.

\cref{finite block} recalls from \cite{ackermann2006loewy} the structure of $\uniblockfin$ in our choice of characteristic, and uses this to establish the structure of $\finanni$ and the equivalence between $\fourg$ and $\fannig$. Then in \cref{p-adic lift} we lift this equivalence to one between $\ourgen$ and $\annigen$. Finally, in \cref{derived categories} we first recall some general properties of dg algebras and their derived categories, and then use these to establish our equivalence.

\subsection{Acknowledgements}

I would like to thank the Engineering and Physical Sciences Research Council, grant T00046, for funding my PhD studentship, which has enabled all of this research. I would also like to thank my PhD supervisors, Professors Shaun Stevens and Vanessa Miemietz, for their constant guidance and support.

\section{Preliminaries}\label{preliminaries}

Let $\padicfield$ denote a non-archimedean local field, with ring of integers $\roi$, uniformiser $\varpi$, and residue field $\resfield$. Write $q=\left|\resfield\right|$ and $p=\charstc(\resfield)$. Let $\redgp=\gln$, and write $\padicgp=\redgp(\padicfield)$ and $\fingp=\redgp(\resfield)$.

Fix in $\fingp$ a minimal parabolic subgroup $\finbor$; we may without loss of generality take $\finbor$ to be the the upper triangular matrices. Similarly, we have the unipotent radical of $\finbor$, written $\finuni$, which is the unipotent upper triangular matrices. Fix also a maximal split torus $\fintor$ in $\finbor$, which we take to be the diagonal matrices. We shall also need $\finuniopp$, the transpose of $\finuni$, that is, the unipotent lower triangular matrices.

Fix in $\padicgp$ an Iwahori subgroup $\iwa$, which we take to be the preimage of $\finbor$ under the quotient $\roi\rightarrow\resfield$. Similarly, we have the pro-$p$ radical of the Iwahori subgroup, written $\propiwa$, which is the preimage of $\finuni$, and a maximal compact subgroup $\maxcomp$ with pro-$p$ radical $\propmax$, which are the preimages of $\fingp$ and $1$ respectively.

Let $\weyl$ denote the extended affine Weyl group of $\padicgp$, and $\finweyl$ the finite Weyl group of $\padicgp$, with $\finroots$ its simple reflections. We can view these as subgroups of $\padicgp$, with $\finweyl$ being the permutation matrices, $\finroots$ the simple transpositions, and $\weyl$ being the product of these with diagonal matrices with entries powers of $\varpi$.

Let $\coeffs$ be an algebraically closed field of characteristic $l\neq p$. We write $\hecfin$ for the group algebra of $\fingp$ over $\coeffs$, and $\finrep$ for the category of $\fingp$-representations over $\coeffs$, that is, modules over $\hecfin$. Similarly, write $\smrep$ for the smooth (left)-representations of $G$ over $R$. We fix an $R$-valued Haar measure on $\padicgp$ with $\mu(\propmax)=1$. Then $\smrep$ is isomorphic to the category of nondegenerate modules over the global Hecke algebra $\hecglob$ associated to $\mu$.

The unipotent block of $\uniblockfin$ is the block containing the trivial representation $\triv$. Denote the corresponding direct summand of $\hecfin$ by $\hecfinuni$ and its direct summand of $\finrep$ by $\uniblockfin$. Similarly, the the block of $\smrep$ containing the trivial representation $\triv$ is called the unipotent block and written $\uniblock$. The corresponding block in $\hecglob$ is written $\unihec$.

We write $\ind_H^{G}$ for compact induction from $H$ to $G$, as well as $\infl_{H}^G$ for inflation from $H$ to $G$, and $\res_H^G$ for restriction from $G$ to $H$. For a parabolic subgroup $C$ with levi subgroup $L$, write $\parind_{L,C}^G$ for (unnormalised) parabolic induction from $L$ to $G$ via $C$. Similarly, for a parahoric subgroup $J$ with reductive quotient $M$, write $\phorind_{M,J}^G=\ind_{J}^G\infl_{M}^J$ for parahoric induction from $M$ to $G$ via $J$.

Let $\finproj=\ind_{\finbor}^{\fingp}\triv$, and let $\project=\phorind_{\fingp,\maxcomp}^{\padicgp}\finproj=\ind_{I}^G\triv$. Recall that the Iwahori-Hecke algebra of $\padicgp$ with respect to $\iwa$, written $\hecglnco$, is $\End(P)$, or equivalently by Frobenius reciprocity the left-and-right-$\iwa$-invariant functions $\padicgp\rightarrow R$ with multiplication given by convolution. This has a presentation, called the Bernstein presentation (see \cite{vigneras2006algebres}), which is defined over $\ms{Z}=\mb{Z}[q^{\pm\frac{1}{2}}]$. Thus, using these generators and relations, we may define the Iwahori-Hecke algebra over $\ms{Z}$, written $\hecglnq$. We can recover $\hecglnco$ as $\coeffs\otimes_{\ms{Z}}\hecglnq$.

For each subset $\rootset$ of $\finroots$, let $\weyl_{\rootset}$ denote the subgroup of $\finweyl$ generated by $\rootset$. Then write $x_{\rootset}$ for the element $\indicate_{\iwa\weyl_{\rootset}\iwa}\in\hecglnq$.

\begin{definition}\label{schur algebra definition}
    The \emph{Schur algebra} of $\padicgp$ over $\coeffs$ is the $\coeffs$-algebra $\schglnco$ defined by

\begin{align*}
    \schglnco=\End_{\hecglnco}\left\{\bigoplus_{{\rootset}\subseteq\finroots}x_{\rootset}\hecglnco\right\}.
\end{align*}
\end{definition}

We can also define the Schur algebra $\schglnq$ over $\ms{Z}$ as for the Hecke algebra, and we again have $\schglnco=\coeffs\otimes_{\ms{Z}}\schglnq$.

We can describe subsets of $\finroots$ explicitly. Recall that a \emph{composition} of $n$ is a tuple $\lambda=\left\{\lambda_1,\dots,\lambda_M\right\}$ of positive integers that sums to $n$. Define ${\rootset}(\lambda)$ to be the subset of $\finroots$ of all $(i \ i+1)$ with $\sum_{m'=1}^{m}\lambda_{m'} \leq i < \sum_{m'=1}^{m+1}\lambda_{m'}$ for some $m$. This gives a bijection between compositions of $n$ and subsets of $\finroots$.

Let $\annihilator$ be the annihilator in $\hecglob$ of the module $\project$, which lives in the unipotent block. Let $\anniblock$ be the category of $\hecglob$-modules annihilated by $\annihilator$, that is, the category of modules over $\annihec=\hecglob/\annihilator$. This quotient factors through $\unihec$, that is, $\anniblock$ is a subcategory of $\uniblock$. In characteristic zero, or more generally when the characteristic of $\coeffs$ doesn't divide the order of $\fingp$, we have that $\anniblock$ and $\uniblock$ coincide, and moreover are equivalent to the category of modules over the Iwahori-Hecke algebra $\hecglnco$. The situation in general is more complex:

\begin{theorem}\label{Vigneras background}
    Some finite power $\annihilator^N$ of $\annihilator$ annihilates $\uniblock$. Furthermore, $\anniblock$ is equivalent to the category of modules over the Schur algebra $\schglnco$.
\end{theorem}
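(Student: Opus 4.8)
This combines two results of Vign\'eras \cite{vigneras2003schur}, and the plan is to recapitulate her argument, establishing the equivalence first. The heart of it is to exhibit a finitely generated projective generator of $\anniblock$ with endomorphism ring $\schglnco$. One constructs an explicit representation $\annigen$, which may be taken to be the direct sum, over the standard parahoric subgroups $\parhor\supseteq\iwa$, of the parahoric inductions of the projective generators of the unipotent blocks of their reductive quotients; since the $\prop{\parhor}$ are pro-$p$ and $l\neq p$, a parahoric induction of a projective module is projective in $\smrep$, and one checks that $\annihilator$ annihilates $\annigen$, so that $\annigen$ is a projective object of $\anniblock=\annihec\modcat$ and hence a projective $\annihec$-module. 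One then verifies that $\annigen$ generates $\anniblock$ --- equivalently, that every simple representation killed by $\annihilator$ is a quotient of $\annigen$ --- using the classification of the simple unipotent representations (each a subquotient of a parahoric induction of a unipotent simple module of a reductive quotient) together with Frobenius reciprocity to detect the relevant invariants. With a finitely generated projective generator in hand, Morita theory gives $\anniblock\simeq\End_{\hecglob}(\annigen)\modcat$, and one finishes by identifying $\End_{\hecglob}(\annigen)$ with $\schglnco$: by Frobenius reciprocity the $\Hom$-spaces between the summands of $\annigen$ are intertwining spaces governed by the double cosets $\iwa\weyl_{\rootset}\iwa$, and this matches the structure of $\bigoplus_{\rootset\subseteq\finroots}x_{\rootset}\hecglnco$ inside $\hecglnco=\End(\project)$ of \cref{schur algebra definition}. (Vign\'eras also observes that the much simpler $\ourgen=\bigoplus_{\rootset}\ind_{\parhor_{\rootset}}^{\padicgp}\triv$ already satisfies $\End_{\hecglob}(\ourgen)\cong\schglnco$, with $\ourgen$ again having annihilator $\annihilator$, each summand being a quotient of $\project$.)

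For the claim that a finite power of $\annihilator$ annihilates $\uniblock$, I would first show $\annihilator\subseteq\jacrad(\unihec)$. Since $\annigen$ is a sum of parahoric inductions of projective generators, for every unipotent simple module $\sigma$ of a reductive quotient $M$ the parahoric induction $\phorind_{M,\parhor}^{\padicgp}(\sigma)$ is a quotient of $\annigen$ and hence killed by $\annihilator$; and, by the classification of simple unipotent representations, every simple object of $\uniblock$ is a subquotient of such an induction, so is killed by $\annihilator$; thus $\annihilator$ lies in the annihilator of every simple object of $\uniblock$, i.e. in $\jacrad(\unihec)$. It then remains to see that $\jacrad(\unihec)$ is nilpotent: by \cite{dat2009finitude} the block $\unihec$ is two-sided Noetherian, and it is module-finite over its centre, which is a finitely generated $\coeffs$-algebra (part of the known structure of the unipotent block), so $\unihec$ is an affine PI algebra; its Jacobson radical is then nil by the Nullstellensatz for affine PI algebras, and a nil ideal of a Noetherian ring is nilpotent. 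Hence $\annihilator^{N}=0$ in $\unihec$ for some $N$, and in particular $\annihilator^{N}$ annihilates all of $\uniblock$.

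The genuinely hard part throughout is the construction and analysis of $\annigen$ and the classification of the simple unipotent representations, which powers both the verification that $\annigen$ generates $\anniblock$ and the inclusion $\annihilator\subseteq\jacrad(\unihec)$; the double-coset identification of $\schglnco$ and the ring-theoretic finiteness inputs, though needed, are routine by comparison. In practice I would cite \cite{vigneras2003schur} (together with \cite{dat2009finitude} for Noetherianity) for these facts, since the contributions of the present paper lie downstream of them.
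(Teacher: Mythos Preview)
The paper's proof is a one-line citation to the Main Theorem of \cite{vigneras2003schur}, and your concluding sentence does the same, so at the level of what is actually asserted you agree with the paper.

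Your sketch of the underlying argument, however, diverges from Vign\'eras (and from the paper's later explicit description) in two places. First, the progenerator $\annigen$ is \emph{not} the sum over parahorics $\parhor\supseteq\iwa$ of parahoric inductions of projective generators of the finite unipotent blocks, and such an object is not annihilated by $\annihilator$: already at the finite level the annihilator $\finanni$ of $\finproj=P_1$ does not kill the projective cover $P_2$ of the cuspidal simple (in the paper's notation $\finanni P_2=S'\neq 0$). The actual construction is $\annigen=\unigen/\annihilator\unigen$, with $\unigen$ the parahoric induction of a module built from the Gelfand--Graev representation; one \emph{forces} annihilation by $\annihilator$ by quotienting, and projectivity of $\annigen$ then holds only in $\anniblock$, not in $\smrep$. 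So ``one checks that $\annihilator$ annihilates $\annigen$'' is the wrong shape for the step, and the claimed projectivity in $\smrep$ is false.

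Second, your route to nilpotence via $\annihilator\subseteq\rad(\unihec)$ and ``$\unihec$ is an affine PI algebra'' presupposes that $\unihec$ is module-finite over a finitely generated centre in non-banal characteristic. That is a substantial structural input in its own right, not lighter than what you are proving, and it is not the argument Vign\'eras gives. Since you rightly conclude that one should simply cite \cite{vigneras2003schur}, these issues do not affect your bottom line, but the sketch as written would not stand on its own.
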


\begin{proof}
    This is the `Main Theorem' from Section $2$ of the introduction of \cite{vigneras2003schur}.
\end{proof}

This is proven via explicit construction of a progenerator of $\anniblock$, whose endomorphism algebra is $\schglnco$. In attempting to find derived extensions of this equivalence, we shall make use of the explicit structure of this progenerator.

For each standard parahoric subgroup $\iwa\subseteq J\subseteq \maxcomp$, let $M_J=J/J^1$ denote its reductive quotient, let $U_J$ denote the unipotent radical of a minimal parabolic subgroup of $M_J$, and let $\chi_J$ denote a generic character of $U_J$. We define the finite Gelfand Graev module of $J$ to be $\Gamma_J=\ind_{U_J}^{M_J}\chi_J$. Note that this is independent of choice of $U_J$ and $\chi_J$. In what follows, we shall simply refer to $\Gamma_K$ as \emph{the} Gelfand Graev module. Write $J_f=J/K^1$, let $\funig=\bigoplus_J\parind_{M_J,J_f}^{\fingp}\Gamma_J$, and put $\unigen=\phorind_{\fingp,\maxcomp}^{\padicgp}\funig$.

\begin{theorem}\label{Vigneras generator}
    $\anniblock$ has progenerator $\annigen=\unigen/\annihilator\unigen$.
\end{theorem}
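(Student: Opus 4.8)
The plan is to verify the three defining properties of a progenerator — finitely generated, projective, and generating — for $\annigen=\unigen/\annihilator\unigen$; this statement is part of the package of \cite{vigneras2003schur} underlying \cref{Vigneras background}, so in the paper itself it will be a citation, but here is the shape of the argument. Finite generation is immediate, since $\funig$ is a finite-dimensional $\coeffs[\fingp]$-module, compact induction from the open subgroup $\maxcomp$ preserves finite generation, and $\annigen$ is a quotient of $\unigen$.

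For projectivity I would use that $\finuni$ is a finite $p$-group and $l=\charstc\coeffs\neq p$, so $\coeffs[\finuni]$ is semisimple; hence $\triv$ and $\chi$ are projective over $\coeffs[\finuni]$, and since $\ind_{\finuni}^{\fingp}$ is left adjoint to the exact restriction functor it sends them to projective $\coeffs[\fingp]$-modules, so $\funig=\ind_{\finuni}^{\fingp}\triv\oplus\ind_{\finuni}^{\fingp}\chi$ is projective over $\coeffs[\fingp]$. Writing $\phorind_{\fingp,\maxcomp}^{\padicgp}=\ind_{\maxcomp}^{\padicgp}\circ\infl_{\fingp}^{\maxcomp}$, the inflation functor is left adjoint to the functor of $\propmax$-invariants, which agrees with $\propmax$-coinvariants (as $\propmax$ is pro-$p$ and $l\neq p$) and is therefore exact, while $\ind_{\maxcomp}^{\padicgp}$ is left adjoint to restriction along the open subgroup $\maxcomp$; both preserve projectives, so $\unigen=\phorind_{\fingp,\maxcomp}^{\padicgp}\funig$ is projective in $\smrep$. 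Then, identifying $\anniblock$ with modules over $\annihec=\hecglob/\annihilator$, the inclusion $\anniblock\hookrightarrow\smrep$ is exact and fully faithful with left adjoint $M\mapsto M/\annihilator M$, and a left adjoint of an exact functor preserves projectives, so $\annigen$ is projective in $\anniblock$.

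The hard part is showing that $\annigen$ generates $\anniblock$; this is essentially the core of \cite{vigneras2003schur}. Since $\annigen$ is projective, it generates iff every indecomposable projective object of $\anniblock$ is a direct summand of it, so I would try to pin down those indecomposable projectives. They fall into two families: those already visible in $\project=\ind_{\iwa}^{\padicgp}\triv$, which organise the endomorphism ring into its Iwahori--Hecke part $\hecglnco$, and genuinely new ones coming from the Gelfand--Graev module — the key being that $\ind_{\finuni}^{\fingp}\chi$ is itself projective (again because $\finuni$ is a $p$-group), so its parahoric induction contributes projective summands, notably the parahoric induction of the projective cover of the Steinberg module of $\fingp$ (and of the other generic unipotent simples when $n>2$), which the non-projective parabolic inductions $\ind_{Q}^{\fingp}\triv$ from proper parabolics $Q\supsetneq\finbor$ cannot provide. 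Both families already sit inside $\annigen$ by construction, because $\finproj=\ind_{\finbor}^{\fingp}\triv$ is a summand of $\ind_{\finuni}^{\fingp}\triv=\bigoplus_{\theta}\ind_{\finbor}^{\fingp}\theta$ (here $l\nmid|\fintor|$, as $l$ is odd and divides $q+1$), so $\project$ is a summand of $\unigen$. The remaining, non-formal step — which I expect to be the main obstacle — is to show these summands are all of the indecomposable projectives of $\anniblock$, equivalently that $\End_{\padicgp}(\unigen)$ reduces modulo $\annihilator$ to exactly $\schglnco=\End_{\hecglnco}\big(\bigoplus_{\rootset\subseteq\finroots}x_{\rootset}\hecglnco\big)$, with each $x_{\rootset}\hecglnco\cong\Hom_{\padicgp}(\project,\ind_{\parhor_{\rootset}}^{\padicgp}\triv)$ (for the parahoric $\parhor_{\rootset}=\iwa\weyl_{\rootset}\iwa$) accounted for inside $\annigen$; carrying this out means controlling the $\propmax$-invariants and parahoric restrictions of $\unigen$, an explicit computation of exactly the kind developed in \cref{finite block} and \cref{p-adic lift}.
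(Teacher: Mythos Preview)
You are right that the paper's proof is simply a citation to \cite{vigneras2003schur} --- specifically Proposition~D10 for the progenerator claim, with the definitions traced to their equation~(8) and Proposition~D5(1). Your additional sketch of the underlying argument is sound for finite generation and projectivity; the one caveat is that your parenthetical justification that $\finproj$ splits off from $\ind_{\finuni}^{\fingp}\triv$ via $l\nmid|\fintor|$ invokes the hypotheses $l\mid q+1$ and $l$ odd, which are only imposed from \cref{finite block} onward, whereas \cref{Vigneras generator} is stated in \cref{preliminaries} for general $n$ and arbitrary $l\neq p$.
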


\begin{proof}
    \cite{vigneras2003schur}, Proposition D10, says that $\annigen$ is a progenerator. The definition of $\annigen$ is their equation 8, and the definition of $\unigen$ is their Proposition D5(1), where it is important to note that we have removed repeated copies of the same projective summands, and we have specialised her definition for a general reductive group to our particular $\padicgp$.
\end{proof}

We shall also need finiteness of the various categories we consider.

\begin{theorem}\label{everything is noetherian}
    $\smrep$ is noetherian.
\end{theorem}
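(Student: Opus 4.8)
The plan is to reduce the statement to a known finiteness result for $p$-adic reductive groups, namely the results of \cite{dat2009finitude}, which establish that the category of smooth $\coeffs$-representations of $\padicgp=\glnp$ is noetherian (indeed, that $\hecglob$ is a noetherian ring, or that every finitely generated smooth representation is noetherian as an object of $\smrep$). Since $\smrep$ is equivalent to the category of nondegenerate modules over the global Hecke algebra $\hecglob$, it suffices to verify that $\hecglob$ satisfies the relevant noetherianity hypothesis in that reference and to translate their conclusion into the language of the ambient category. The key point to check is that the hypotheses of \cite{dat2009finitude} apply in our setting: they require $l\neq p$ (which we have assumed throughout, since $\coeffs$ has characteristic $l\neq p$) and that $\padicgp$ be a reductive $p$-adic group, which holds as $\padicgp=\GL_n(\padicfield)$.

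First I would recall precisely which statement of \cite{dat2009finitude} is being invoked: the main theorem there asserts that for $\padicgp$ the rational points of a connected reductive group over a $p$-adic field and $\coeffs$ a noetherian ring in which $p$ is invertible, the category $\smrep$ is locally noetherian, equivalently every finitely generated smooth representation is noetherian. Since our $\coeffs$ is a field (hence certainly noetherian) of characteristic $l\neq p$, the hypotheses are met. Second, I would note that "noetherian category" here should be read in the sense that $\smrep$ is locally noetherian and its finitely generated objects form an abelian subcategory closed under subobjects — this is exactly what is needed later to make sense of the bounded derived category $\bdermod{\uniblock}$ of finitely generated unipotent representations and to apply classical-generation arguments. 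Third, I would observe that noetherianity passes to the direct summand $\uniblock$, since a block is a Serre subcategory cut out by an idempotent in the center, so $\uniblock$ inherits the property; this is the form in which the result is actually used downstream.

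The main (and essentially only) obstacle is bookkeeping rather than mathematics: one must confirm that the finiteness theorem of \cite{dat2009finitude} is stated (or trivially extends) over a general coefficient field of characteristic $l\neq p$ rather than only over $\overline{\mb{Q}}_\ell$ or $\overline{\mb{Z}}_\ell$, and that their notion of noetherian agrees with the one we need. If \cite{dat2009finitude} is phrased only for certain coefficient rings, one would supplement it with a base-change argument: noetherianity of $\hecglob$ over $\coeffs$ follows from noetherianity of a suitable integral form together with the fact that $\coeffs$ is a finitely generated (in fact, we may reduce to finite-type) algebra over the prime field, or one cites the refinement in the literature that handles arbitrary $l\neq p$ directly. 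Beyond that, the proof is a one-line citation. I would therefore write the proof as: "This is the main result of \cite{dat2009finitude}, applied to $\padicgp=\glnp$ with coefficients in $\coeffs$, noting that $l=\charstc(\coeffs)\neq p$."

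\begin{proof}
    The category $\smrep$ is equivalent to the category of nondegenerate modules over the global Hecke algebra $\hecglob$. Since $\padicgp=\glnp$ is the group of rational points of a connected reductive group over the $p$-adic field $\padicfield$, and $\coeffs$ is a (noetherian) field of characteristic $l\neq p$, the main finiteness theorem of \cite{dat2009finitude} applies and shows that $\smrep$ is locally noetherian; equivalently, every finitely generated smooth $\coeffs$-representation of $\padicgp$ is a noetherian object. In particular the finitely generated objects form an abelian subcategory of $\smrep$ closed under subobjects, and the same holds for the direct summand $\uniblock$, since $\uniblock$ is a Serre subcategory of $\smrep$ cut out by a central idempotent.
\end{proof}
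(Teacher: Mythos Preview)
Your proposal is correct and takes essentially the same approach as the paper: both reduce to a direct citation of \cite{dat2009finitude}. The paper's own proof is simply ``This is \cite{dat2009finitude}, Theorems 1.3 and 1.5,'' so your additional verification of the hypotheses (that $\padicgp=\glnp$ is reductive, that $\coeffs$ has characteristic $l\neq p$, and that noetherianity passes to $\uniblock$) is elaboration beyond what the paper records, but entirely in the same spirit.
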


\begin{proof}
    This is \cite{dat2009finitude}, Theorems 1.3 and 1.5.
\end{proof}

Thus $\hecglob$ is Noetherian, $\annihilator$ is finitely generated, and $\unihec$ and $\annihec$ are Noetherian.

\section{The Structure of the Schur Algebra}\label{schur algebra structure}

We shall need a key property of the Schur algebra to proceed: that it has finite global dimension. We shall obtain this by finding a suitable affine cellular structure. We recall here the definition. Let $A$ be an $R$-algebra with anti-involution $i$.

\begin{definition}\label{affine cell ideal}
    A 2-sided ideal $J$ in $A$ such that $i(J)=J$ is called an \emph{affine cell ideal} if there are
    \begin{itemize}
        \item a free $\coeffs$-module of finite rank $V$,
        \item a finitely generated commutative $\coeffs$-algebra $B$ with involution $\sigma$,
        \item and a left $A$-module structure on $\Delta=V\otimes_{\coeffs} B$ that is compatible with the regular right $B$-module structure,
    \end{itemize}
    such that, if we define a right $A$-module structure on $\Delta'=B\otimes_{\coeffs} V$ by $xa=\tau^{-1} (i(a)\tau(x))$ where $\tau:\Delta'\rightarrow\Delta$, $b\otimes v\mapsto v\otimes b$, there is an isomorphism of $A$-$A$-bimodules $\alpha: J\rightarrow \Delta\otimes_B\Delta'=V\otimes_{\coeffs} B\otimes_{\coeffs} V$ making the following diagram commute:
    \[\begin{tikzcd}
        J & {V\otimes_{\coeffs} B\otimes_{\coeffs} V} \\
        J & {V\otimes_{\coeffs} B\otimes_{\coeffs} V}
        \arrow["\alpha", from=1-1, to=1-2]
        \arrow["{v\otimes b\otimes v' \mapsto v'\otimes\sigma(b)\otimes v}", from=1-2, to=2-2]
        \arrow["i"', from=1-1, to=2-1]
        \arrow["\alpha"', from=2-1, to=2-2]
    \end{tikzcd}\]
\end{definition}

\begin{definition}\label{affine cellular}
    $A$ is said to be \emph{affine cellular} if there is an $\coeffs$-module decomposition $A=\bigoplus_{k=1}^KJ_k'$ such that, for all $k$, $i(J_k')=J_k'$ and $J_k=\bigoplus_{k'=1}^kJ_{k'}'$ is a 2-sided ideal in $A$ such that $J_k'=J_k/J_{k-1}$ is an affine cell ideal in $A/J_{k-1}$.
\end{definition}

Write $V_k$ and $B_k$ for the $V$ and $B$ as in \cref{affine cell ideal} that give an affine cell ideal structure for $J_k'$.

\begin{definition}
    Using the same notation as in \cref{affine cellular}, an affine cellular algebra is said to be \emph{idempotent affine cellular} if, for all $k$, we have that $J_k'$ is generated as a 2-sided ideal in $A/J_{k-1}$ by a nonzero idempotent.
\end{definition}

We will use the following two lemmas to simplify the task of finding such a structure.

\begin{lemma}[{\cite[Theorem 4.3(1)]{koenig2012affine}}]\label{idempotents generate}
    Let $A$ be affine cellular with notation as above. If, as a 2-sided ideal in $A/J_{k-1}$, we have that $J_k'^{2}=J_k'$ and $J_k'$ contains a nonzero idempotent $e$, then $e$ generates $J_k'$ as a 2-sided ideal in $A/J_{k-1}$.
\end{lemma}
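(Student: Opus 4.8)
This is \cite[Theorem 4.3(1)]{koenig2012affine}; here is the shape of a proof. Write $\bar A=A/J_{k-1}$ and $J=J_k'$, so $J$ is an affine cell ideal of $\bar A$: there is a bimodule isomorphism $\alpha\colon J\xrightarrow{\sim}\Delta\otimes_B\Delta'$ with $\Delta=V\otimes_\coeffs B$ as in \cref{affine cell ideal}. Since $e\in J$ and $J$ is two-sided, $\bar Ae\bar A\subseteq J$ is automatic, so all the content is in the reverse inclusion $J\subseteq\bar Ae\bar A$.

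The plan is to transport the problem through $\alpha$ and argue inside the (generally non-unital) ring $J$, on which $\bar A$ acts by multipliers via its actions on the factors $\Delta$ and $\Delta'$. Unwinding the cell-ideal axioms, the multiplication on $J$ is transported by $\alpha$ to the formula $(\delta_1\otimes_B\delta_1')(\delta_2\otimes_B\delta_2')=\delta_1\otimes_B\varphi(\delta_1',\delta_2)\,\delta_2'$, where $\varphi\colon\Delta'\otimes_{\bar A}\Delta\to B$ is the (affine) cellular form, a $B$-bimodule map. Two computations then translate the hypotheses. First, $J^2$ corresponds under $\alpha$ to $V\otimes_\coeffs(\im\varphi)\otimes_\coeffs V$, with $\im\varphi$ an ideal of $B$, so $J^2=J$ is equivalent to $\varphi$ being surjective onto $B$, i.e.\ to the existence of $\xi_t\in\Delta'$, $\eta_t\in\Delta$ with $\sum_t\varphi(\xi_t,\eta_t)=1_B$. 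Second, writing $\alpha(e)=\sum_j\delta_j\otimes_B\delta_j'$, the relation $e^2=e$ becomes $\sum_{j,l}\delta_j\otimes_B\varphi(\delta_j',\delta_l)\,\delta_l'=\sum_j\delta_j\otimes_B\delta_j'$.

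The substance is then to combine surjectivity of $\varphi$ with idempotency of $e$ to write an arbitrary $\delta\otimes_B\delta'\in J$ as a finite sum $\sum_i a_iea_i'$ with $a_i,a_i'\in\bar A$. A convenient packaging is: because $\varphi$ is surjective, the two-sided ideals of the ring $J$ are exactly the submodules $V\otimes_\coeffs\mathfrak b\otimes_\coeffs V$ with $\mathfrak b\trianglelefteq B$; granting this, $\bar Ae\bar A$ corresponds to an ideal $\mathfrak b$ of $B$ that is idempotent by $e^2=e$ and nonzero by $e\neq 0$, and $\mathfrak b=B$ because in the affine cellular structures at hand the algebras $B$ are Noetherian domains (for the Schur algebra they are Laurent-polynomial rings), for which a nonzero finitely generated idempotent ideal is the whole ring; hence $\bar Ae\bar A=J$.

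I expect the ideal-correspondence step to be the main obstacle: extracting a classification of all two-sided ideals of $J$ from the single relation $\sum_t\varphi(\xi_t,\eta_t)=1_B$ requires careful non-commutative bookkeeping, since the families $(\xi_t)$ and $(\eta_t)$ cannot be simultaneously ``diagonalised''; once that dictionary between ideals of $J$ and ideals of $B$ is in place the rest is routine commutative algebra. (In full generality one may instead avoid any domain hypothesis by running a Nakayama/determinant-trick argument on $J$ as a finitely generated left $\bar A$-module — finite generation holding because $J$ is a direct sum of $\dim_\coeffs V$ copies of $\Delta=V\otimes_\coeffs B$, a finitely generated $B$-module — which produces an idempotent acting as a one-sided identity for $J$, and that idempotent then generates $J$.)
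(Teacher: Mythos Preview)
The paper gives no proof here beyond the citation. Your sketch correctly isolates the mechanism: transport through $\alpha$, the cellular form $\varphi$, and the equivalence $J^2=J\Leftrightarrow\varphi$ surjective. The argument you give---pass from $\bar Ae\bar A$ to an ideal $\mathfrak b\trianglelefteq B$, observe $\mathfrak b$ is nonzero and idempotent, then use that a Noetherian domain has no such proper ideal---is correct and suffices for this paper, where every $B_k$ is a localised polynomial ring.

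Two gaps remain if one wants the statement in the generality written. First, your ideal-correspondence claim (all two-sided ideals of the ring $J$ have the form $V\otimes\mathfrak b\otimes V$) is false when the Gram matrix is degenerate: with $V$ two-dimensional, $\psi(v_1,v_1)=1$ and all other $\psi(v_i,v_j)=0$, the span of tensors involving $v_2$ is a $J$-ideal not of this shape. What you actually need, and what follows directly from the multiplication formula, is only that the particular ideal $\bar Ae\bar A=JeJ$ has this form, which it does. Second, and more seriously, the lemma as stated fails without a connectedness hypothesis on $B$: take $V=\coeffs$, $\bar A=J=B=\coeffs\times\coeffs$, $e=(1,0)$; then $J^2=J$ and $e$ is a nonzero idempotent, yet $\bar Ae\bar A=\coeffs\times 0\neq J$. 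Your parenthetical Nakayama route does not close this: even granting finite generation of $J$ over $\bar A$ (which you have not established---you argued finite generation over $B$, a different module structure), the determinant trick yields only \emph{some} generating idempotent, not the given $e$, and the counterexample shows that is optimal. None of this affects the paper, which only invokes the lemma when $B$ is a domain.
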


\begin{lemma}\label{idempotent preserves affine cellular}
    Suppose $e$ is an idempotent in $A$.
    
    \begin{enumerate}
        \item If $i(e)=e$, and if $A$ is affine cellular, with notation as above, then so is $eAe$, with the same $B_k$, and $J_k'$ replaced with $eJ_k'e$.
        \item If $AeA=A$, then restriction of scalars gives a Morita equivalence from $A$ to $eAe$.
        \item If $j\in eAe$ generates a 2-sided ideal $J$ in $A$, then it generates $eJe$ in $eAe$.
    \end{enumerate}
\end{lemma}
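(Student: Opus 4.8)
The plan is to treat the three parts separately, since only~(1) carries real content; I would prove~(1) by induction along the affine cellular chain of \cref{affine cellular}. Since $i$ is an anti-automorphism with $i(e)=e$, it restricts to an involution on $eAe$, and $i(eJ_k'e)=i(e)\,i(J_k')\,i(e)=eJ_k'e$ because $i(J_k')=J_k'$; moreover $eAe=\bigoplus_k eJ_k'e$ as $\coeffs$-modules. A routine check shows that each $eJ_ke:=\bigoplus_{k'\le k}eJ_{k'}'e$ is a two-sided ideal of $eAe$ (if $x=exe\in eJ_ke$ and $eae\in eAe$ then $(eae)x=e(aex)e\in eJ_ke$ since $J_k$ is two-sided in~$A$, and symmetrically on the right), and that $eAe\cap J_{k-1}=eJ_{k-1}e$, whence $eAe/eJ_{k-1}e\cong\bar e\,(A/J_{k-1})\,\bar e$, where $\bar e$ is the image of~$e$. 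So it suffices to prove the one-step statement: if $J$ is an affine cell ideal of $A$ with datum $(V,B,\sigma,\alpha)$ and $i(e)=e$, then $eJe$ is an affine cell ideal of $eAe$ over the same ring~$B$ (cf.\ \cite{koenig2012affine} for the classical cellular case).

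For the one-step statement I would transport everything along the bimodule isomorphism $\alpha\colon J\xrightarrow{\sim}\Delta\otimes_B\Delta'$. The idempotent $e$ acts on $\Delta=V\otimes_{\coeffs}B$ through the left $A$-structure and commutes with the regular right $B$-action, so $e$ is an idempotent in $\End_B(\Delta)\cong M_r(B)$ with $r=\operatorname{rank}_{\coeffs}V$, and $\alpha$ identifies $eJe$ with $e(\Delta\otimes_B\Delta')e=(e\Delta)\otimes_B(\Delta'e)$. One then checks formally that the left $eAe$-action on $e\Delta$ remains compatible with the right $B$-action, that $\sigma$ and $\tau$ still intertwine the two sides, and that the defining commutative square of \cref{affine cell ideal} for $eJe$ is obtained by applying $\bar e(-)\bar e$ to the one for $J$. \textbf{The main obstacle} is that $e\Delta$, being a direct summand of the free $B$-module $\Delta\cong B^r$, is a priori only a finitely generated \emph{projective} $B$-module, whereas \cref{affine cell ideal} demands a datum of the form $V'\otimes_{\coeffs}B$ with $V'$ free of finite rank over $\coeffs$. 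I would dispose of this by noting that the rings $B_k$ arising in the affine cellular structure on $\schglnco$ constructed in \cref{schur algebra structure} are (direct sums of) polynomial and Laurent-polynomial algebras over the field $\coeffs$, over which every finitely generated projective is free (Quillen--Suslin and its Laurent variant); alternatively, for the specific idempotents to which this lemma gets applied, $e$ acts on each $\Delta_k$ as a coordinate projection, so $e\Delta_k$ is visibly free. Either way $e\Delta\cong V'\otimes_{\coeffs}B$ and the argument closes.

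Part~(2) is classical Morita theory: when $AeA=A$ the functor $M\mapsto eM$ is an equivalence from $A$-modules to $eAe$-modules, with quasi-inverse $N\mapsto Ae\otimes_{eAe}N$; I would simply cite this. Part~(3) is a one-line computation needing neither $i(e)=e$ nor $AeA=A$: the two-sided ideal generated by $j$ in $eAe$ is $(eAe)\,j\,(eAe)$, which is contained in $(eA)\,j\,(Ae)=e(AjA)e=eJe$ because $j=eje$, while conversely $e\,a\,j\,b\,e=(eae)\,j\,(ebe)\in(eAe)\,j\,(eAe)$ for all $a,b\in A$; hence $(eAe)\,j\,(eAe)=eJe$.
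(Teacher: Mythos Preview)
Your treatment of parts~(2) and~(3) matches the paper exactly: the paper cites \cite{yang2014affine}, Proposition~2.4 for~(2), and for~(3) does precisely your one-line calculation $eJe=eAjAe=(eAe)\,j\,(eAe)$ using $j=eje$.

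For part~(1) the paper does not give an argument at all; it simply cites \cite{yang2014affine}, Lemma~3.3. Your sketch is therefore more ambitious than the paper's own proof, and it is essentially sound up to the obstacle you flag. That obstacle is real: with \cref{affine cell ideal} as written, one genuinely needs $e\Delta$ to be \emph{free} over $B$, not merely projective, and nothing in the general hypotheses guarantees this. Your two workarounds both address the specific application rather than the lemma in its stated generality, which is honest and sufficient for the paper's purposes. One small correction: the lemma is applied in \cref{schur algebra affine cellular} to $\bigschur$ over the base $\ms{Z}=\mb{Z}[q^{\pm 1/2}]$, not over the field $\coeffs$; fortunately the Quillen--Suslin/Swan theorems still give freeness of finitely generated projectives over the mixed polynomial/Laurent rings $B_k$ in that setting, so your first workaround survives. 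Your second workaround (that the relevant $e$ acts as a coordinate projection on each $\Delta_k$) is also plausible from the explicit description in \cref{idempotents live in our algebra}, but would need to be checked against the basis in \cite{deng2016affine}.

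In short: your proof is correct for what the paper needs, and more informative than the paper's bare citation, but it does not establish~(1) in the full generality in which the lemma is stated. If you want the general statement, you should either relax \cref{affine cell ideal} to allow $\Delta$ finitely generated projective over $B$, or defer to the cited reference.
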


\begin{proof} 
    The first claim is \cite{yang2014affine}, Lemma 3.3, and the second is Proposition 2.4 from the same paper.

    The third claim is a quick direct calculation:

    \begin{align}
        \begin{split}
            eJe &= eAjAe \\
            &= eAejeAe
        \end{split}
    \end{align}

    where the last line follows as $j\in eAe$ and hence $eje=j$.
\end{proof}

The next result will allow us to restrict our attention to $\hecglnq$ and $\schglnq$, as the (idempotent) affine cellular structures over $\coeffs$  will then follow immediately.

\begin{lemma}[{\cite[Lemma 2.4]{koenig2012affine}}]\label{extension preserves affine cellular}
    Let $\coeffs'$ be a noetherian domain that is an $\coeffs$-algebra, $A$ an (idempotent) affine cellular $\coeffs$-algebra. Then $\coeffs'\otimes_{\coeffs} A$ is an (idempotent) affine cellular algebra with affine cellular structure induced from that on $A$ by taking the tensor product throughout by $\coeffs'$.
\end{lemma}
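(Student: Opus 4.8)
The statement is a base-change result, so the plan is to take the affine cellular data for $A$ and apply $\coeffs'\otimes_\coeffs(-)$ to every ingredient, checking each survives; I will use throughout that this functor commutes with finite direct sums and with tensor products (including relative tensor products, such as over the $B_k$), that it is right exact, and that it sends free modules of finite rank to free modules of the same rank. Fix the $\coeffs$-module decomposition $A=\bigoplus_{k=1}^K J_k'$, the chain of ideals $J_k=\bigoplus_{k'\le k}J_{k'}'$, the involution $i$, and for each $k$ the affine cell ideal data $(V_k,B_k,\sigma_k,\Delta_k=V_k\otimes_\coeffs B_k,\alpha_k)$ witnessing that $J_k'$ is an affine cell ideal in $A/J_{k-1}$. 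Applying the functor: $\coeffs'\otimes_\coeffs A=\bigoplus_k(\coeffs'\otimes_\coeffs J_k')$ as $\coeffs'$-modules, so each $\coeffs'\otimes_\coeffs J_k$ is a direct summand of $\coeffs'\otimes_\coeffs A$ and is a two-sided ideal (ideal-ness is the condition that the multiplication maps out of $A\otimes_\coeffs J_k$ and $J_k\otimes_\coeffs A$ factor through $J_k$, which the functor preserves); right exactness identifies $(\coeffs'\otimes_\coeffs A)/(\coeffs'\otimes_\coeffs J_{k-1})$ with $\coeffs'\otimes_\coeffs(A/J_{k-1})$ and $\coeffs'\otimes_\coeffs J_k'$ with the corresponding subquotient; $\mathrm{id}\otimes i$ is an involution of $\coeffs'\otimes_\coeffs A$ fixing each $\coeffs'\otimes_\coeffs J_k'$; $\coeffs'\otimes_\coeffs V_k$ is free of finite rank over $\coeffs'$; and $\coeffs'\otimes_\coeffs B_k$ is a finitely generated commutative $\coeffs'$-algebra (a generating set and the commutativity relations base change) carrying the involution $\mathrm{id}\otimes\sigma_k$.

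For the bimodule data one invokes the canonical isomorphisms
\[
\coeffs'\otimes_\coeffs(V_k\otimes_\coeffs B_k\otimes_\coeffs V_k)\;\cong\;(\coeffs'\otimes_\coeffs V_k)\otimes_{\coeffs'}(\coeffs'\otimes_\coeffs B_k)\otimes_{\coeffs'}(\coeffs'\otimes_\coeffs V_k),\qquad
\coeffs'\otimes_\coeffs(\Delta_k\otimes_{B_k}\Delta_k')\;\cong\;(\coeffs'\otimes_\coeffs\Delta_k)\otimes_{\coeffs'\otimes_\coeffs B_k}(\coeffs'\otimes_\coeffs\Delta_k').
\]
Under the first, $\coeffs'\otimes_\coeffs\Delta_k=(\coeffs'\otimes_\coeffs V_k)\otimes_{\coeffs'}(\coeffs'\otimes_\coeffs B_k)$ inherits a left $(\coeffs'\otimes_\coeffs A)$-module structure compatible with the right regular $(\coeffs'\otimes_\coeffs B_k)$-action; the passage from $\Delta_k$ to $\Delta_k'$ commutes with the functor because the swap map $\tau$ and the involution $i$ both do, so the right module obtained from $\coeffs'\otimes_\coeffs\Delta_k$ by the recipe of \cref{affine cell ideal} is exactly $\coeffs'\otimes_\coeffs\Delta_k'$; and $\coeffs'\otimes_\coeffs\alpha_k$ is an $(\coeffs'\otimes_\coeffs A)$-bimodule isomorphism onto $(\coeffs'\otimes_\coeffs V_k)\otimes_{\coeffs'}(\coeffs'\otimes_\coeffs B_k)\otimes_{\coeffs'}(\coeffs'\otimes_\coeffs V_k)$, since base change of an isomorphism is an isomorphism and of a bimodule map a bimodule map. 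The defining square commutes after applying the functor because it did beforehand and the map $v\otimes b\otimes v'\mapsto v'\otimes\sigma_k(b)\otimes v$ base changes to the analogous map with $\mathrm{id}\otimes\sigma_k$ in place of $\sigma_k$. This yields an affine cell ideal structure on each $\coeffs'\otimes_\coeffs J_k'$ in $\coeffs'\otimes_\coeffs(A/J_{k-1})$, hence the affine cellular structure on $\coeffs'\otimes_\coeffs A$. In the idempotent case, if $e_k$ is a nonzero idempotent generating $J_k'$ as a two-sided ideal in $A/J_{k-1}$, then $1\otimes e_k$ is an idempotent, and it generates $\coeffs'\otimes_\coeffs J_k'$ — either because the surjection realizing $(A/J_{k-1})e_k(A/J_{k-1})=J_k'$ base changes to a surjection, or via \cref{idempotents generate} applied to the affine cellular algebra $\coeffs'\otimes_\coeffs A$.

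The one genuinely non-formal point is that $1\otimes e_k$ must be nonzero, and $x\mapsto 1\otimes x$ need not be injective on $A/J_{k-1}$ for an arbitrary $\coeffs$-algebra $\coeffs'$. This is where a flatness hypothesis enters: in the intended applications $A$ is a free $\coeffs$-module — the Iwahori–Hecke and Schur algebras over $\ms{Z}$ have free underlying modules — so each $J_k'$ is a direct summand of a free module, hence flat over $\coeffs$, and $J_k'\hookrightarrow\coeffs'\otimes_\coeffs J_k'$ is injective, forcing $1\otimes e_k\neq 0$. Everything else is the observation that affine cellularity is expressed purely through direct sum decompositions, multiplication maps being surjective or factoring through an ideal, a distinguished bimodule isomorphism, and a commuting diagram, all manifestly stable under the base-change functor, with right exactness taking care of the subquotients $A/J_{k-1}$.
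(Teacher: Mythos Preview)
The paper does not supply its own proof here; the lemma is simply recorded as a citation of \cite[Lemma 2.4]{koenig2012affine}. Your base-change verification of the affine cellular data is correct and is the standard argument: every ingredient of \cref{affine cell ideal,affine cellular} is expressed through finite direct sums, tensor products, module and bimodule structures, and a commuting square, all of which are preserved by $\coeffs'\otimes_\coeffs(-)$, with right exactness handling the passage to the subquotients $A/J_{k-1}$.

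There is one small inaccuracy in your treatment of the idempotent clause. Flatness (or even freeness) of $J_k'$ over $\coeffs$ does \emph{not} make the map $x\mapsto 1\otimes x$ from $J_k'$ to $\coeffs'\otimes_\coeffs J_k'$ injective; that would require the structure map $\coeffs\to\coeffs'$ itself to be injective, which is not part of the hypotheses (and indeed fails in the paper's actual application, where one base-changes from $\ms Z$ to a field of positive characteristic). The cleaner fix is the one you already half-state: since $e_k$ generates $J_k'$, the element $1\otimes e_k$ generates $\coeffs'\otimes_\coeffs J_k'$, so it is nonzero precisely when that ideal is nonzero; and if some $\coeffs'\otimes_\coeffs J_k'$ vanishes one may simply omit that cell from the chain without harm. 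In the paper's application this never occurs, because the $B_k$ are localised polynomial rings over $\ms Z$ (\cref{stuff from dengyang}(5)), hence free, so each $\coeffs'\otimes_\coeffs J_k'$ is free of positive rank over $\coeffs'$.
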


The point of considering this notion is that it allows us to prove finite global dimension.

\begin{theorem}[{\cite[Theorem 4.4]{koenig2012affine}}]\label{finite global dimension}
    Suppose $A$ is idempotent affine cellular, with notation as above, and suppose $\jacrad(B_k)=0$ and $\gldim(B_k)<\infty$ for all $k$. Then $\gldim(A)<\infty$.
\end{theorem}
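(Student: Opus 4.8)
This result is recorded above as \cite[Theorem 4.4]{koenig2012affine}; here is how one would prove it. The argument runs by induction on the length $K$ of an idempotent affine cell chain $0 = J_0 \subsetneq J_1 \subsetneq \dots \subsetneq J_K = A$. Write $J = J_1'$; this is an affine cell ideal in $A$, generated as a two-sided ideal by a nonzero idempotent $e$, with associated data $V_1$ and $B_1$. One checks that $A/J$ is again idempotent affine cellular, now of length $K-1$, with associated commutative algebras $B_2,\dots,B_K$, each still satisfying $\jacrad(B_k)=0$ and $\gldim(B_k)<\infty$; so by the inductive hypothesis $\gldim(A/J)<\infty$. The induction thus reduces the theorem to the following implication: if $J = AeA$ is an affine cell ideal in $A$ generated by an idempotent $e$ whose associated algebra $B_1$ has $\jacrad(B_1)=0$ and $\gldim(B_1)<\infty$, and if $\gldim(A/J)<\infty$, then $\gldim A<\infty$ (the case $K=1$, where $A/J=0$, being the bottom of the induction).

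I would prove this implication in three moves. \emph{Structure:} since $e^2=e$, the module $Ae$ is a direct summand of ${}_AA$, hence projective, and dually $eA$ is projective on the right; moreover the bimodule isomorphism $J\cong V_1\otimes_\coeffs B_1\otimes_\coeffs V_1$ lets one verify that multiplication induces an isomorphism $Ae\otimes_{eAe}eA\xrightarrow{\ \sim\ }J$. \emph{Corner algebra:} $eAe=eJe$ is, via the cell structure, a corner of the algebra $V_1\otimes_\coeffs B_1\otimes_\coeffs V_1$, and the hypotheses $\jacrad(B_1)=0$ and $\gldim(B_1)<\infty$ force $\gldim(eAe)<\infty$ — up to Morita equivalence, $eAe$ is a product of matrix algebras over the reduced regular ring $B_1$; alternatively this follows from \cref{idempotent preserves affine cellular}(1) once one arranges $i(e)=e$. \emph{Vanishing of Tor:} one shows $\operatorname{Tor}_i^{eAe}(eA,Ae)=0$ for all $i>0$ — equivalently that $\Delta=V_1\otimes_\coeffs B_1$ has finite projective dimension over $A$ — so that $J$ becomes a \emph{stratifying ideal}; this is read off from the $B_1$-module structure of $\Delta$ and $\Delta'=B_1\otimes_\coeffs V_1$ together with $\gldim(B_1)<\infty$.

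Once $J$ is known to be stratifying, one has the recollement of derived categories $\dermod{A/J}\to\dermod{A}\to\dermod{eAe}$, and the standard homological estimate for stratifying ideals bounding $\gldim A$ by a function of $\gldim(A/J)$ and $\gldim(eAe)$ (of the shape $\gldim A\le\gldim(A/J)+\gldim(eAe)+2$) finishes the proof, both terms on the right being finite by the previous paragraph.

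The step I expect to be the genuine obstacle is the vanishing of Tor: upgrading the chain of idempotent ideals to the statement that $J$ is stratifying — that is, actually using the affine \emph{cell} structure and not merely the ideal chain — amounts to reducing homological algebra over $A$ along the bottom layer to homological algebra over $B_1$. This is also precisely where the two hypotheses on the $B_k$ are essential: if one of them were allowed to be, say, $\coeffs[t]/(t^2)$, the corner algebra $eAe$ would have infinite global dimension and the theorem would be false, so the conjunction $\jacrad(B_k)=0$ and $\gldim(B_k)<\infty$ is there exactly to make each $B_k$ a reduced regular ring over which this reduction does no harm.
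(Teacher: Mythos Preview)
The paper does not prove this statement; it is quoted as \cite[Theorem~4.4]{koenig2012affine} and used as a black box. Your outline is indeed the shape of the Koenig--Xi argument: induction on the length of the cell chain, reduction to a single idempotent cell ideal $J=AeA$, identification of $eAe$ in terms of $B_1$ so that $\gldim(eAe)<\infty$, and then the stratifying-ideal/recollement bound of the form $\gldim A\le \gldim(A/J)+\gldim(eAe)+2$.

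One imprecision to flag: your parenthetical ``equivalently that $\Delta=V_1\otimes_\coeffs B_1$ has finite projective dimension over $A$'' is not an equivalence to the Tor vanishing $\operatorname{Tor}_i^{eAe}(eA,Ae)=0$. The Tor condition is a statement about the $eAe$-module structure of $Ae$ and is precisely what makes $J$ stratifying; the finite projective dimension of $\Delta$ as an $A$-module is a statement over a different ring and is rather a \emph{consequence} of the stratification together with $\gldim(eAe)<\infty$. Koenig--Xi establish and use both facts, but they are distinct ingredients in the bound, not reformulations of one another. Apart from this conflation, your sketch matches the cited proof.
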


Observe that neither of these final two conditions are preserved under extension of scalars in general. However, we will see that they are preserved for the particular $B_k$ we work with.

The literature also makes use of the following auxiliary object, a \emph{two-parameter Schur algebra}, whose affine cellular structure we shall prove as an intermediate step. Fix some $N>n$, and let an $N$-composition of $n$ be an $N$-tuple of \emph{nonnegative} integers that sum to $n$. Let the set of all such be denoted $\Lambda(N,n)$. For $\lambda\in\Lambda(N,n)$ we define ${\rootset}(\lambda)$ to be ${\rootset}(\lambda')$ where $\lambda'$ is the composition of $n$ given by deleting all the zero entries from $\lambda$. From this we may define a new Schur algebra:

\begin{align}\label{bigschur definition}
    \bigschur=\End_{\hecglnq}\left(\bigoplus_{\lambda\in\Lambda(N,n)}x_{{\rootset}(\lambda)}\hecglnq\right).
\end{align}

Let $\Lambda_0(N,n)$ denote the elements of $\Lambda(N,n)$ where all zero entries occur after every nonzero entry. There is precisely one for each composition of $n$. Thus by sending each composition to its corresponding element of $\Lambda_0(N,n)$, we may include $\schglnq$ in $\bigschur$.

In \cite{yang2014affine}, Definition 3.1, they give another construction of a two-parameter Schur algebra, which we denote $\altschur$ (in their notation it would be written $\hat{S}_v(N,n))$. Despite its very different construction, it in fact coincides with $\bigschur$:

\begin{proposition}[{\cite[Proposition 7.4]{varagnolo1999decomposition}}]\label{bigschur is altschur}
    There is an isomorphism $\bigschur\rightarrow\altschur$, which we shall denote $x\mapsto\alt{x}$.
\end{proposition}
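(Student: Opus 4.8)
The plan is to recognise both $\bigschur$ and $\altschur$ as standard models of one and the same object --- the affine $q$-Schur algebra of rank $N$ and degree $n$ --- and then to import the comparison between these models from \cite{varagnolo1999decomposition}. Indeed, \eqref{bigschur definition} presents $\bigschur$ in the ``Green style'', as the $\hecglnq$-endomorphism algebra of the direct sum of all $q$-permutation modules $x_{\rootset(\lambda)}\hecglnq$, $\lambda\in\Lambda(N,n)$, whereas Yang's $\altschur$ of \cite[Definition 3.1]{yang2014affine} is the same algebra in its other incarnation. So there is essentially no new mathematics to produce here: the substance lies in lining up conventions.

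Concretely I would argue in three steps. (1) Unwind \eqref{bigschur definition}: $\Lambda(N,n)$ is the set of $N$-compositions of $n$, and for such a $\lambda$ the element $x_{\rootset(\lambda)}\in\hecglnq$ is the $q$-symmetrizer attached to the standard parabolic subgroup $\weyl_{\rootset(\lambda)}$ of $\finweyl$ determined by the composition obtained from $\lambda$ by deleting its zero entries; hence $\bigoplus_{\lambda}x_{\rootset(\lambda)}\hecglnq$ is precisely the $\hecglnq$-module whose endomorphism algebra appears in the affine $q$-Schur setup of \cite{varagnolo1999decomposition}. (2) Recall from \cite[Definition 3.1]{yang2014affine}, together with the identifications quoted there (which themselves rest on \cite{varagnolo1999decomposition}), that $\altschur$ is by construction isomorphic to the Varagnolo--Vasserot affine $q$-Schur algebra for the parameters $(N,n)$. (3) Invoke \cite[Proposition 7.4]{varagnolo1999decomposition}, the $(N,n)$-parameter analogue of the defining identity $\schglnq=\End_{\hecglnq}\{\bigoplus_{\rootset\subseteq\finroots}x_{\rootset}\hecglnq\}$, which supplies the isomorphism between the endomorphism-algebra model and the Varagnolo--Vasserot model; composing with step (2) yields $\bigschur\xrightarrow{\ \sim\ }\altschur$, which we denote $x\mapsto\alt{x}$.

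The one genuine obstacle is the bookkeeping in steps (1)--(2), which must be done carefully on three fronts. First, the flavour of affine Hecke algebra: here $\hecglnq$ is built from the extended affine Weyl group of $\padicgp=\glnp$, and one checks this agrees with the conventions of \cite{varagnolo1999decomposition,yang2014affine} up to the choice of underlying lattice, which does not affect the Schur algebra. Second, the parameter normalisation: both $\hecglnq$ and $\schglnq$ are defined over $\ms{Z}=\mb{Z}[q^{\pm\frac12}]$, while the quantum-affine side is written over a parameter $v$, so one checks the intended specialisation is $v\mapsto q^{1/2}$. Third, the indexing of compositions, namely that passing from $\lambda$ to $\rootset(\lambda)$ by deleting zero entries matches the parabolic used on the other side. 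A self-contained alternative, bypassing \cite{varagnolo1999decomposition}, would be to take Yang's presentation, define explicit endomorphisms of $\bigoplus_{\lambda}x_{\rootset(\lambda)}\hecglnq$ realising each generator, check the defining relations, and then compare standard bases (of BLM/affine-codeterminant type) on the two sides to obtain bijectivity; but this merely reproves \cite[Proposition 7.4]{varagnolo1999decomposition}, so I would not pursue it.
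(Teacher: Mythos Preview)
Your proposal is correct and matches the paper's approach: the paper gives no proof at all beyond the citation to \cite[Proposition 7.4]{varagnolo1999decomposition}, so the statement is taken as a black box from the literature. Your additional discussion of the convention-matching (Hecke algebra flavour, parameter $v\leftrightarrow q^{1/2}$, composition indexing) is more than the paper provides, but it is exactly the right content if one wanted to flesh out the citation, and your decision not to reprove the result from scratch is the same choice the paper makes.
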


A great deal is known about the affine cellular structure of $\altschur$, which can by the previous proposition be transferred to $\bigschur$.

\begin{proposition}\label{stuff from dengyang}
    $\altschur$ is idempotent affine cellular. We shall denote this structure using the notation of \cref{affine cellular}, but with $\alt{J}$ instead of $J$. Furthermore:
    \begin{enumerate}
        \item\label{dystuff 1} $\altschur$ has a basis $\alt{e}_A$, where $A$ are certain $\mb{Z}$-by-$\mb{Z}$ matrices.
        \item\label{dystuff 2} The involution $\alt{i}$ on $\altschur$ is given by $\alt{i}(\alt{e}_A)=\alt{e}_{A^t}$.
        \item\label{dystuff 3} For each $k$, the module $\alt{J}_k'$ contains the element $\alt{l}_k=\alt{e}_{\diag(\lambda^{(k)})}$, where $\lambda^{(k)}\in\Lambda(N,n)$ has decreasing entries, and for $\lambda\in\Lambda(N,n)$ we write $\diag(\lambda)$ for the $\mb{Z}$-by-$\mb{Z}$ matrix which is zero except in the diagonal $N$-by-$N$ blocks, which are diagonal with $m$th entry $\lambda_m$.
        \item\label{dystuff 4} $\alt{l}_k\alt{e}_A=\alt{e}_A$ for any $A=(a_{ij})$ with $\row(A)=\lambda^{(k)}$, where \\ $\row(A)=\left(\sum_{j\in\mathbb{Z}}a_{ij}\right)_{1\leq i \leq N}$.
        \item\label{dystuff 5} For each $k$, $B_k$ is of the form $\ms{Z}[X_1,\dots, X_{m_k}, X_{i_1}^{-1}, X_{i_{n_k}}^{-1}]$ for some $\left\{i_1,\dots,i_{n_k}\right\}\subseteq\left\{1,\dots, m_k\right\}$.
    \end{enumerate}
    Furthermore, all the above also holds for $\bigschur$.
\end{proposition}

\begin{proof}
    By \cite{nakajima2015affine} (see also \cite{cui2015affine}, Theorem 4.7), $\altschur$ is affine cellular. The details of the structure given in this paper are expounded in Sections 3 and 4 of \cite{deng2016affine}:

    The first claim is their Definition 3.1 (with their Equation 3.3 characterising the specific matrices $A$, and where they write $\alt{e}_A$ as just $e_A$).

    The second claim is their Equation 4.6 (they call the involution $\tau$).

    Their Proposition 4.3, meanwhile, defines certain sets $\mathfrak{c}_{\lambda}$ for every $\lambda\in\Lambda(N,n)$ with decreasing entries. Then, by their Equation 4.7 and the preceding paragraphs on page 443, the $\alt{J}_k'$ are spanned by certain $\mathfrak{c}_{\lambda^{(k)}}$. (In their notation, the modules $\alt{J}_k'$ are written as $C_i'$).

    From their Definition 3.1, for any $\lambda\in\Lambda(N,n)$, there is the element $\alt{l}_{\lambda}=\alt{e}_{\diag(\lambda)}$ (which they again just write as $l_{\lambda}$). Their Proposition 4.1 says that there is some other element, written $\{l_{\lambda}\}$, is equal to $\alt{l}_{\lambda}$. Then their Equation 4.3 says that $\{l_{\lambda}\}\in\mathfrak{c}_{\lambda}$ when the entries of $\lambda$ are decreasing. Thus we get the third claim.

    The fourth claim is Equation 3.6 (with Equation 3.4 giving the definition of $\row(A)$).

    In their Equation 4.8, they define, for every $\lambda\in\Lambda(N,n)$ with decreasing entries, rings $B_{\lambda}$=$\ms{Z}[X_1,\dots, X_{\lambda_1}, X_{i_1}^{-1}, X_{i_{N}}^{-1}]$ where $i_j=\lambda_1-\lambda_{j+1}$ and we set $\lambda_{N+1}=0$. Their Proposition 4.4 then gives a so-called generalised matrix algebra structure on the $\alt{J}_k'$, which is given over $B_{\lambda^{(k)}}$. But Proposition 2.2 of \cite{cui2016affine} (which combines Propositions 2.2 and 2.3 of \cite{koenig2012affine}) says that this is exactly an affine cellular structure, with $B_k=B_{\lambda^{(k)}}$. Thus we have the fifth claim.

    The results then hold for $\bigschur$ by applying the isomorphism of \cref{bigschur is altschur}.
\end{proof}

To reduce to $\schglnq$, we need some further properties of the idempotents.

\begin{lemma}\label{idempotents live in our algebra}
    For all $\lambda\in\Lambda(N,n)$, we have that $e_{\diag(\lambda)}$ is the identity map on $x_{{\rootset}(\lambda)}\hecglnq$. Additionally, for all $k$, $l_k$ lies in $\schglnq$, and $l_k$ is an idempotent generating the affine cell ideal $J_k'$. 
\end{lemma}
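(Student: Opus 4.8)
The plan is to realise each $e_{\diag(\lambda)}$ as the idempotent in $\bigschur$ that projects the module $M=\bigoplus_{\mu\in\Lambda(N,n)}x_{{\rootset}(\mu)}\hecglnq$ onto its $\lambda$-labelled summand, and then to observe that for the weakly decreasing tuples $\lambda^{(k)}$ this summand is one of those indexed by $\Lambda_0(N,n)$, so that $l_k$ automatically lands in the subalgebra $e_0\bigschur e_0\cong\schglnq$. Throughout I would work in $\bigschur$ rather than $\altschur$, which is legitimate by \cref{bigschur is altschur} together with the last statement of \cref{stuff from dengyang}.

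For the first assertion, I would first recall from the construction of the basis $\{e_A\}$ in \cite{deng2016affine} (transported via \cref{bigschur is altschur}) that, viewed inside $\End_{\hecglnq}(M)=\bigoplus_{\mu,\nu\in\Lambda(N,n)}\Hom_{\hecglnq}(x_{{\rootset}(\nu)}\hecglnq,\,x_{{\rootset}(\mu)}\hecglnq)$, each $e_A$ is a module homomorphism between the summands indexed by the row-sum tuple $\row(A)$ and the column-sum tuple of $A$, and annihilates every other summand. Since a direct computation — the one already used in the proof of \cref{N>n affine cellular} — gives $\row(\diag(\lambda))=\lambda$, and likewise the column sums of $\diag(\lambda)$ are $\lambda$, it follows that $e_{\diag(\lambda)}$ lies in the block $\End_{\hecglnq}(x_{{\rootset}(\lambda)}\hecglnq)$; moreover it is nonzero, being a basis element by \cref{stuff from dengyang}\ref{dystuff 1}. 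Next I would invoke the standard fact, visible from the construction underlying \cref{stuff from dengyang} (or citable directly), that the diagonal basis elements of $\bigschur$ form a complete set of orthogonal idempotents, that is, $\sum_{\mu\in\Lambda(N,n)}e_{\diag(\mu)}=1_{\bigschur}$. Comparing with the identity $1_{\bigschur}=\sum_{\mu\in\Lambda(N,n)}\mathrm{id}_{x_{{\rootset}(\mu)}\hecglnq}$ and reading off the component lying in the block $\End_{\hecglnq}(x_{{\rootset}(\lambda)}\hecglnq)$ then forces $e_{\diag(\lambda)}=\mathrm{id}_{x_{{\rootset}(\lambda)}\hecglnq}$. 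One must keep in mind here that $M$ genuinely has repeated summands — ${\rootset}(\mu)$ depends on $\mu$ only up to deleting zero parts — so "the $\lambda$-labelled summand" must be read as a labelled summand of the chosen direct sum; with that convention the block decomposition is a genuine $\coeffs$-module direct sum and the comparison is valid.

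For the second assertion, recall that under the bijection between compositions of $n$ and $\Lambda_0(N,n)$ the algebra $\schglnq$ is identified with $e_0\bigschur e_0$, where $e_0\colon M\to M$ is the projection onto $\bigoplus_{\mu\in\Lambda_0(N,n)}x_{{\rootset}(\mu)}\hecglnq$; by the first assertion $e_0=\sum_{\mu\in\Lambda_0(N,n)}e_{\diag(\mu)}$. Now by \cref{stuff from dengyang}\ref{dystuff 3} the tuple $\lambda^{(k)}\in\Lambda(N,n)$ has weakly decreasing entries, so all its zero parts occur after all its nonzero parts; that is, $\lambda^{(k)}\in\Lambda_0(N,n)$. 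Hence $l_k=e_{\diag(\lambda^{(k)})}$ is one of the orthogonal idempotents summing to $e_0$, so $e_0l_ke_0=l_k$, and therefore $l_k\in e_0\bigschur e_0=\schglnq$.

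I expect the only real obstacle to be bookkeeping: pinning down the conventions of \cite{deng2016affine} and \cite{cui2016affine} for how the basis $\{e_A\}$ is realised as $\hecglnq$-module homomorphisms among the $x_{{\rootset}(\mu)}\hecglnq$, matching these against the identification $\schglnq\cong e_0\bigschur e_0$ that is used when including $\schglnq$ in $\bigschur$, and citing (or extracting from \cref{stuff from dengyang}) that the diagonal basis elements form a complete orthogonal family of idempotents. Everything past that is a formal manipulation of orthogonal idempotents.
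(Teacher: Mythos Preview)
Your proposal is correct and follows essentially the same route as the paper. For the first assertion the paper simply cites Equation~8.4 of the Appendix of \cite{deng2016affine} directly, whereas you unpack that identity via the complete-orthogonal-idempotent description of the $e_{\diag(\mu)}$; your argument for the second assertion---that a weakly decreasing tuple in $\Lambda(N,n)$ automatically lies in $\Lambda_0(N,n)$, so $l_k=e_{\diag(\lambda^{(k)})}$ is absorbed by the idempotent $e_0$---is identical to the paper's.
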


\begin{proof}
    The first claim is Equation 8.4 of the Appendix of \cite{deng2016affine}.

    Now we observe that every decreasing tuple in $\Lambda(N,n)$ is an element of $\Lambda_0(N,n)$, and that by claim (3) of \cref{stuff from dengyang} that $l_k=e_{\diag(\lambda^{(k)})}$ and that $\lambda^{(k)}$ is decreasing. Thus $l_k\in\schglnq$.

    Finally, by claim (4) of \cref{stuff from dengyang}, $l_k$ is idempotent, and as $\bigschur$ is idempotent affine cellular the $J_k'$ are idempotent, so $l_k$ generates $J_k'$ by \cref{idempotents generate}.
\end{proof}

Let $e$ denote the identity of $\schglnq$; it is an idempotent in $\bigschur$, and we have $\schglnq=e\bigschur e$.

\begin{lemma}\label{idempotents fixed by involution}
    Both the idempotent $e$ and the $e_{\diag(\lambda)}$ are fixed under the involution.
\end{lemma}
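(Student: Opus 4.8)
The plan is to reduce both assertions to claim (2) of \cref{stuff from dengyang}, which, transported to $\bigschur$ along the isomorphism of \cref{bigschur is altschur}, states that the involution $i$ acts on the distinguished basis by $i(e_A)=e_{A^t}$.

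First I would dispatch the elements $e_{\diag(\lambda)}$. For any $\lambda\in\Lambda(N,n)$ the matrix $\diag(\lambda)$ is diagonal, hence symmetric, so $\diag(\lambda)^t=\diag(\lambda)$ and therefore $i(e_{\diag(\lambda)})=e_{\diag(\lambda)^t}=e_{\diag(\lambda)}$. Then I would handle $e$. Recall that $\schglnq=e\bigschur e$ is realised as the endomorphisms of the sub-sum $\bigoplus_{\lambda\in\Lambda_0(N,n)}x_{\rootset(\lambda)}\hecglnq$ of $\bigoplus_{\lambda\in\Lambda(N,n)}x_{\rootset(\lambda)}\hecglnq$; since by \cref{idempotents live in our algebra} each $e_{\diag(\lambda)}$ is the identity on the $\lambda$-th summand (and zero on the others), the identity of $\schglnq$ is $e=\sum_{\lambda\in\Lambda_0(N,n)}e_{\diag(\lambda)}$. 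As $i$ is $\coeffs$-linear, $i(e)=\sum_{\lambda\in\Lambda_0(N,n)}i(e_{\diag(\lambda)})=\sum_{\lambda\in\Lambda_0(N,n)}e_{\diag(\lambda)}=e$ by the computation just made.

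I do not expect a real obstacle here; the only point needing a moment's care is the identification of $e$ with $\sum_{\lambda\in\Lambda_0(N,n)}e_{\diag(\lambda)}$, which rests on \cref{idempotents live in our algebra} together with the bijection between $\Lambda_0(N,n)$, compositions of $n$, and subsets of $\finroots$, so that these are exactly the summands in the definition of $\schglnq$. Once $i(e)=e$ is established, this is precisely the hypothesis required to apply \cref{idempotent preserves affine cellular}(1) to $\schglnq=e\bigschur e$, which is the use to which this lemma will be put.
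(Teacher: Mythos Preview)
Your proposal is correct and follows essentially the same approach as the paper: both arguments write $e=\sum_{\lambda\in\Lambda_0(N,n)}e_{\diag(\lambda)}$ via \cref{idempotents live in our algebra}, and then invoke claim (2) of \cref{stuff from dengyang} together with the symmetry of $\diag(\lambda)$ to conclude $i(e_{\diag(\lambda)})=e_{\diag(\lambda)}$. The only difference is cosmetic ordering---you treat the $e_{\diag(\lambda)}$ first and then deduce $i(e)=e$, while the paper reduces $e$ to the $e_{\diag(\lambda)}$ first---but the content is identical.
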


\begin{proof}
    By definition, $e$ is the sum of the identity elements of each of the summands in \cref{bigschur definition}. But, by \cref{idempotents live in our algebra}, these are precisely the $e_{\diag(\lambda)}$ for $\lambda\in\Lambda_0(N,n)$. It thus suffices to show that the $e_{\diag(\lambda)}$ are preserved under $i$. But by Claim (2) of \cref{stuff from dengyang} we have that $i$ sends $e_A$ to $e_{A^t}$.
\end{proof}

Using these we can reduce to our Schur Algebra.

\begin{theorem}\label{schur algebra affine cellular}
    $\schglnq$ is idempotent affine cellular. For each $k$, $B_k$ is of the form $\ms{Z}[X_1,\dots, X_{m_k}, X_{i_1}^{-1}, X_{i_{n_k}}^{-1}]$ for some $\left\{i_1,\dots,i_{n_k}\right\}\subseteq\left\{1,\dots, m_k\right\}$. The ideals $eJ_k'e$ are generated by $l_k$.
\end{theorem}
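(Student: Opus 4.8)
The plan is to transport the idempotent affine cellular structure on $\bigschur$ obtained in \cref{N>n affine cellular} down to $\schglnq = e\bigschur e$ along the identity idempotent $e$ of $\schglnq$, using \cref{idempotent preserves affine cellular}. Since $i(e)=e$ by \cref{idempotents fixed by involution} and $\bigschur$ is affine cellular, part~(1) of \cref{idempotent preserves affine cellular} immediately gives that $\schglnq = e\bigschur e$ is affine cellular with the same commutative algebras $B_k$ --- so the asserted shape $\ms{Z}[X_1,\dots,X_{m_k},X_{i_1}^{-1},X_{i_{n_k}}^{-1}]$ is inherited verbatim from \cref{dystuff 5} --- and with cell ideals $eJ_k'e$, living in the subquotients $e\bigschur e/eJ_{k-1}e$. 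So the only remaining point is the idempotence condition: that each $eJ_k'e$ is generated, as a two-sided ideal of $\schglnq/eJ_{k-1}e$, by a nonzero idempotent.

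The candidate is $l_k = e_{\diag(\lambda^{(k)})}$. It lies in $\schglnq$ by \cref{idempotents live in our algebra}, it is a nonzero idempotent, and in the proof of \cref{N>n affine cellular} it is shown to generate $J_k'$ as a two-sided ideal of $\bigschur/J_{k-1}$; moreover it remains nonzero in that quotient, since $l_k\in J_k'$ while $J_k'\cap J_{k-1}=0$, the affine cellular filtration being an $\coeffs$-module direct sum. Passing to $\bar A := \bigschur/J_{k-1}$, the image $\bar e$ of $e$ is an idempotent with $\bar e\bar A\bar e = \schglnq/eJ_{k-1}e$ (using $e\bigschur e\cap J_{k-1} = eJ_{k-1}e$, which is immediate from idempotency of $e$), and $\bar l_k\in\bar e\bar A\bar e$ generates the two-sided ideal $\bar J_k' = J_k'$ of $\bar A$. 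Applying part~(3) of \cref{idempotent preserves affine cellular} with $\bar A$ in place of $A$ then yields that $\bar l_k$ generates $\bar e\bar J_k'\bar e = eJ_k'e$ as a two-sided ideal of $\bar e\bar A\bar e = \schglnq/eJ_{k-1}e$, which is exactly what is needed. Hence $\schglnq$ is idempotent affine cellular and the cell ideals $eJ_k'e$ are generated by $l_k$.

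Essentially this is a matter of chaining the cited lemmas, and I expect no serious difficulty. The one point that needs care --- the \emph{main obstacle}, such as it is --- is the interaction with the filtration quotients: one must check that the $(k-1)$st filtration ideal of the structure produced on $e\bigschur e$ by \cref{idempotent preserves affine cellular}(1) really is $eJ_{k-1}e$, and that $e\bigschur e\cap J_{k-1} = eJ_{k-1}e$, so that the two descriptions of the subquotient $\schglnq/eJ_{k-1}e$ agree and \cref{idempotent preserves affine cellular}(3) may be legitimately invoked there. Both identifications are immediate once unwound, so all the substantive content lies in the already-established results \cref{stuff from dengyang}, \cref{N>n affine cellular}, \cref{idempotents live in our algebra}, and \cref{idempotents fixed by involution}.
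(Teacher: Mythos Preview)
Your proposal is correct and follows essentially the same route as the paper: transport the affine cellular structure from $\bigschur$ to $e\bigschur e$ via \cref{idempotent preserves affine cellular}(1) using $i(e)=e$ from \cref{idempotents fixed by involution}, and then use \cref{idempotents live in our algebra} together with \cref{idempotent preserves affine cellular}(3) to see that the $l_k$ generate the cell ideals. Your write-up is in fact more careful than the paper's own proof about passing to the quotient $\bar A=\bigschur/J_{k-1}$ before invoking part~(3), which is the correct place to do it; the paper simply says ``we want to apply \cref{idempotent preserves affine cellular}'' and leaves this unwinding to the reader.
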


\begin{proof}
    Here we follow Theorem 5.7 of \cite{deng2016affine}, which again only considered the more restrictive $q=1$ case.

    We know $\bigschur$ is idempotent affine cellular by \cref{stuff from dengyang}. We want to apply \cref{idempotent preserves affine cellular}. But we have shown that the necessary conditions hold in \cref{idempotents fixed by involution} and \cref{idempotents live in our algebra}. The form of the $B_k$ follows also follows via \cref{idempotent preserves affine cellular} from claim (5) of \cref{stuff from dengyang}.
\end{proof}

To conclude, we make the following algebraic observation.

\begin{lemma}\label{B_s are nice}
    Let $R$ be a commutative ring. Fix some $m\in\mb{N}$ and some set \\$\left\{i_1,\dots,i_n\right\}\subseteq\left\{1,\dots,m\right\}$.
    \begin{enumerate}
        \item If $R$ is a domain, then $\jacrad(R[X_1,\dots, X_{m}, X_{i_1}^{-1}, X_{i_{n}}^{-1}])=0$.
        \item If $\gldim(R)<\infty$, then $\gldim(R[X_1,\dots, X_{m}, X_{i_1}^{-1}, X_{i_{n}}^{-1}])<\infty$.
    \end{enumerate}
\end{lemma}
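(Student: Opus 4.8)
The plan is to treat both claims as instances of standard facts about Laurent polynomial rings over a base ring, applied iteratively one variable at a time. Write $R' = R[X_1,\dots,X_m,X_{i_1}^{-1},\dots,X_{i_n}^{-1}]$, and note that $R'$ can be built up from $R$ by a finite chain of extensions, each of which either adjoins a polynomial variable (giving $S \mapsto S[X]$) or adjoins a polynomial variable together with its inverse (giving $S \mapsto S[X,X^{-1}]$), since the indices $i_1,\dots,i_n$ lie among $1,\dots,m$. So it suffices to check that each claim is preserved under a single such step.

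For claim (1), I would first reduce the Laurent case to the polynomial case: if $S$ is a domain then $S[X,X^{-1}]$ is a localisation of $S[X]$, hence a domain, and a localisation of a ring has Jacobson radical contained in the localisation of the Jacobson radical, so it is enough to show $\jacrad(S[X]) = 0$ for $S$ a domain (and, for the Laurent step, that $S[X,X^{-1}]$ is again a domain, which is clear). The key point is then: for any domain $S$, $\jacrad(S[X]) = 0$. This is classical — an element $f \in \jacrad(S[X])$ has $1 + Xf$ a unit in $S[X]$, and in a polynomial ring over a domain the only units are the units of $S$ (comparing degrees), forcing $f = 0$; alternatively one cites that $\jacrad$ of a polynomial ring equals the nilradical, which is $0$ here. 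Since each ring in the chain from $R$ to $R'$ is again a domain (polynomial and Laurent extensions of domains are domains), induction along the chain gives $\jacrad(R') = 0$.

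For claim (2), the relevant classical input is the Hilbert-syzygy-type statement $\gldim(S[X]) = \gldim(S) + 1$, valid for any (left-)Noetherian ring, together with the observation that $\gldim(S[X,X^{-1}]) = \gldim(S) + 1$ as well, since $S[X,X^{-1}]$ is a localisation of $S[X]$ and localisation does not increase global dimension, while $S$ embeds as a retract of $S[X,X^{-1}]$ so $\gldim(S) \le \gldim(S[X,X^{-1}])$. Applying this to each of the (at most $m+n$) steps in the chain from $R$ to $R'$ raises the global dimension by a finite amount each time, so $\gldim(R') \le \gldim(R) + m + n < \infty$.

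The only mild obstacle is bookkeeping: one wants the intermediate rings to stay Noetherian so that the global-dimension formula for polynomial extensions applies cleanly, but this is immediate from the Hilbert basis theorem once we observe that the hypothesis $\gldim(R) < \infty$ is only invoked in claim (2), where in our intended application $R$ will be $\resfield$ (a field, hence Noetherian of global dimension $0$) or $\ms{Z} = \mb{Z}[q^{\pm 1/2}]$, which is Noetherian; in any case the cited classical results already presuppose the Noetherian setting and there is nothing deeper to prove. I would state the two classical facts (Jacobson radical of a polynomial ring over a domain is zero; global dimension of a polynomial extension is one more than that of the base) with references rather than reprove them, and then just run the induction.
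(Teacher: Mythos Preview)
Your overall approach matches the paper's: iterate one variable at a time, use the $1+Xf$ unit trick for the Jacobson radical, and invoke Hilbert syzygy plus localisation for global dimension. The paper's proof is essentially identical in structure and cites the same ingredients.

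One correction, though: your reduction of the Laurent case in part (1) to the polynomial case via ``a localisation of a ring has Jacobson radical contained in the localisation of the Jacobson radical'' is false as stated. For instance, $\mathbb{Z}$ has zero Jacobson radical, but $\mathbb{Z}_{(p)}$ has Jacobson radical $p\mathbb{Z}_{(p)}\neq 0$. The paper avoids this by treating $A[x,x^{-1}]$ directly: if $p(x)\in\jacrad(A[x,x^{-1}])$, choose $k$ large enough that $x^kp(x)$ has only strictly positive-degree terms, and then $1+x^kp(x)$ being a unit in a domain forces $p(x)=0$. This is the same trick as in the polynomial case with a minor adjustment, and you should replace your localisation step with it.
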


\begin{proof}
    We show the first result by proving that if $A$ is a domain then $A[x]$ and $A[x,x^{-1}]$ are domains with zero Jacobson radical. That $A[x]$ is a domain follows as the leading coefficients of any nonzero polynomials must multiply to give a nonzero leading coefficient for the product. To see that $\jacrad(A[x])=0$, note that if $p(x)\in\jacrad(A[x])$ then $1+xp(x)$ must be a unit, so $p(x)=0$. The logic for $A[x,x^{-1}]$ is identical except that we now must consider $1+x^kp(x)$ for $k$ large enough that $x^kp(x)$ only has terms of strictly positive degree.

    The second follows from the Hilbert Syzygy Theorem, which gives the polynomial case (see \cite{rotman2009homological} Theorem 8.37). The case for Laurent series follows as localisation is exact and preserves projective modules. Alternatively, it is a special case of \cite{mcconnell2001noetherian}, Theorem 7.5.3 (iii) and (iv).
\end{proof}

Thus we have the result we sought.

\begin{theorem}\label{schur algebra finite global dimension}
    $\gldim(\schglnco)<\infty$.
\end{theorem}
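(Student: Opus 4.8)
The plan is to feed the structural results established above into the finite global dimension criterion of Koenig and Xi, \cref{finite global dimension}. The heavy lifting has already been done: by \cref{schur algebra affine cellular}, $\schglnq$ is idempotent affine cellular over $\ms{Z}$, with each term $B_k$ of the explicit shape $\ms{Z}[X_1,\dots,X_{m_k},X_{i_1}^{-1},X_{i_{n_k}}^{-1}]$. So the first step is simply to extend scalars: since $\coeffs$ is a field, it is in particular a noetherian domain and a $\ms{Z}$-algebra, so \cref{extension preserves affine cellular} applies and shows that $\schglnco=\coeffs\otimes_{\ms{Z}}\schglnq$ is idempotent affine cellular, with the $B_k$ replaced throughout by $\coeffs\otimes_{\ms{Z}}B_k=\coeffs[X_1,\dots,X_{m_k},X_{i_1}^{-1},X_{i_{n_k}}^{-1}]$.

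The second step is to check the two remaining hypotheses of \cref{finite global dimension} for these base-changed coefficient rings, namely $\jacrad(B_k)=0$ and $\gldim(B_k)<\infty$. This is precisely \cref{B_s are nice} with $R=\coeffs$: a field is a domain, so the Jacobson radical vanishes, and a field has global dimension $0<\infty$, so the Laurent polynomial ring over it in finitely many variables with finitely many of them inverted has finite global dimension. Applying \cref{finite global dimension} then gives $\gldim(\schglnco)<\infty$.

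The one point that requires care — and the reason the argument is routed through \cref{B_s are nice} rather than simply base-changing a finiteness statement proved over $\ms{Z}$ — is the remark following \cref{finite global dimension}: neither $\jacrad(B_k)=0$ nor $\gldim(B_k)<\infty$ need be preserved under extension of scalars in general, so one cannot avoid verifying them directly over the field $\coeffs$. Once that is observed there is no real obstacle; the proof is a short chain of citations, with all the substantive content lying in the affine cellular structure constructed in \cref{schur algebra affine cellular} and in the elementary ring theory of \cref{B_s are nice}.
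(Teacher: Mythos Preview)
Your proof is correct and follows exactly the same approach as the paper: apply \cref{extension preserves affine cellular} to transport the idempotent affine cellular structure of \cref{schur algebra affine cellular} from $\schglnq$ to $\schglnco$, then invoke \cref{B_s are nice} over the field $\coeffs$ to verify the hypotheses of \cref{finite global dimension}. The additional commentary you give about why the radical and global dimension conditions must be checked over $\coeffs$ rather than over $\ms{Z}$ is accurate and matches the remark made in the paper after \cref{finite global dimension}.
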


\begin{proof}
    By \cref{schur algebra affine cellular}, $\schglnq$ is idempotent affine cellular, and hence by \cref{extension preserves affine cellular} so is $\schglnco$. Thus by \cref{B_s are nice} we may apply \cref{finite global dimension} to obtain our result.
\end{proof}

In fact, reading the details of the cited proofs can give an explicit upper bound for $\gldim(\schglnco)$ in terms of $n$, but we do not investigate that here.

\section{Generators of the Finite Unipotent Block}\label{finite block}

Recall that we write $\fingp=\redgp(\resfield)$, and within this we have fixed a minimal parabolic subgroup $\finbor$, namely the upper triangular matrices. Similarly, we have the unipotent radical of $\finbor$, written $\finuni$, which is the unipotent upper triangular matrices, and a maximal split torus $\fintor$ in $\finbor$, the diagonal matrices. We shall also need $\finuniopp$, the transpose of $\finuni$, that is, the unipotent lower triangular matrices, and $\finweyl$, the finite Weyl group, which we view as the permutation matrices, with $\finroots$ its simple reflections.

Recall also that we write $\hecfin$ for the group algebra of $\fingp$ over $\coeffs$, and $\finrep$ for the category of $\fingp$-representations over $\coeffs$, that is, modules over $\hecfin$. The unipotent block of $\hecfin$ is the block containing the trivial representation $\triv$, and we write the corresponding direct summand of $\hecfin$ by $\hecfinuni$ and the corresponding direct summand of $\finrep$ by $\uniblockfin$. Recall finally that $\finproj=\ind_{\finbor}^{\fingp}\triv$.

For the rest of the paper, we shall take $n=2$, and we shall further assume that $l$ divides $q+1$ but not $q-1$ or $q$. In this case, we have that, letting $\chi$ be any nontrivial character of $\finuni$, the Gelfand-Graev module is $\ind_{\finuni}^{\fingp}\chi$, and $\funig=(\ind_{\finuni}^{\fingp}\chi)\oplus\parind_{\fintor,\finbor}^{\fingp}\ind_{1}^{\fintor}\triv$.

By transitivity of induction and by commutativity of inflation and induction, we can immediately see that $\parind_{\fintor,\finbor}^{\fingp}\ind_{1}^{\fintor}\triv=\ind_{\finbor}^{\fingp}\infl_{\fintor}^{\finbor}\ind_{1}^{\fintor}\triv=\ind_{\finbor}^{\fingp}\ind_{\finuni}^{\finbor}\infl_{1}^{\finuni}\triv=\ind_{\finuni}^{\fingp}\triv$.

Both $\ind_{\finuni}^{\fingp} \chi$ and $\ind_{\finuni}^{\fingp}\triv$ are projective since $\finuni$ is a $p$-group and $\ind$ preserves projective modules. Hence, $\funig$ is projective.

For our choice of $n$ and $l$, the structure of the unipotent block is well-understood.

\begin{lemma}[{\cite[Theorem 4.2]{ackermann2006loewy}}]\label{structures of projectives}
    $\uniblockfin$ has two simple representations $L_1$ and $L_2$, which are the unique simple quotients of the two projective indecomposable representations $P_1$ and $P_2$, which have the following subrepresentation lattices respectively:

    \[
    \begin{tikzcd}
        {P_1} \\
        \rad(P_1) \\
        \rad^2(P_1) \\
        0
        \arrow["{L_1}"', from=2-1, to=1-1]
        \arrow["{L_2}"', from=3-1, to=2-1]
        \arrow["{L_1}"', from=4-1, to=3-1]
    \end{tikzcd} \qquad
    \begin{tikzcd}
        & {P_2} \\
        & \bullet \\
        S' && \bullet \\
        & \bullet && \bullet \\
        && \bullet && S'' \\
        &&& \bullet \\
        &&& 0
        \arrow["{L_2}"', from=2-2, to=1-2]
        \arrow["{L_1}"', from=3-1, to=2-2]
        \arrow["{L_2}"', from=3-3, to=2-2]
        \arrow["{L_2}"', from=4-2, to=3-1]
        \arrow["{L_1}"', from=4-2, to=3-3]
        \arrow[dashed, from=4-4, to=3-3]
        \arrow[dashed, from=5-3, to=4-2]
        \arrow["{L_1}"', from=5-3, to=4-4]
        \arrow["{L_2}"', from=5-5, to=4-4]
        \arrow["{L_2}"', from=6-4, to=5-3]
        \arrow["{L_1}"', from=6-4, to=5-5]
        \arrow["{L_2}"', from=7-4, to=6-4]
    \end{tikzcd}
    \]
\end{lemma}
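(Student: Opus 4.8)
This is \cite[Theorem 4.2]{ackermann2006loewy} restated in our notation, so the plan is to check that the hypotheses coincide and to match up the objects. Ackermann's description of the principal block of $\fingp$ applies precisely when $l$ is odd, $l\mid q+1$, and $l\nmid q(q-1)$, which are our standing assumptions. The dictionary is then forced: $L_1$ is the trivial module $\triv$, $P_1$ its projective cover, and $P_2$ the other projective indecomposable of $\uniblockfin$. It is worth recording that $P_1=\finproj=\ind_{\finbor}^{\fingp}\triv$: this module is projective since $l\nmid|\finbor|$, and indecomposable since $\End_{\hecfin}(\finproj)$ is the finite Hecke algebra $\coeffs[T]/(T^2-(q-1)T-q)$, which for $l\mid q+1$ is $\coeffs[T]/((T+1)^2)$ and hence local; as $\dim\finproj=q+1=\dim P_1$, we conclude $\finproj=P_1$.

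To derive the structure rather than quote it, one uses cyclic block theory. A Sylow $l$-subgroup of $\fingp$ sits inside the non-split maximal torus, which is cyclic of order $q^2-1$, so it is itself cyclic of order $l^a$, where $l^a$ is the $l$-part of $q+1$; hence $\uniblockfin$, being the principal block, has cyclic defect group and is controlled by a Brauer tree. The ordinary characters in the block are the trivial character (degree $1$), the Steinberg character (degree $q$), and the $(l^a-1)/2$ cuspidal characters of $l$-power-order parameter (each of degree $q-1$). Since the block has two irreducible Brauer characters, its tree is a path on three vertices, and comparing the mod-$l$ reductions and the degrees of these ordinary characters forces the path to have the Steinberg vertex in the middle, the trivial character at one end, and the cuspidal characters clustered at the other (exceptional) end, of multiplicity $m=(l^a-1)/2$. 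One reads off $\dim L_1=1$ and $\dim L_2=q-1$.

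Feeding this tree into Janusz's structure theorem for blocks with cyclic defect group then yields the projectives: $P_1$ is uniserial with composition series $L_1,L_2,L_1$; and $P_2$ is the amalgam of two uniserial modules glued along a common head $L_2$ and a common socle $L_2$, one of length $3$ with middle constituent $L_1$ and one of length $m+1$ all of whose constituents are $L_2$. Writing out the submodule lattice of this amalgam reproduces the displayed lattice for $P_2$; its Loewy length is $\max(3,m+1)$, which grows with $l^a$ and is what the dashed stretch of the picture encodes. Two sanity checks: the Cartan matrix is $\stwotwo{2}{1}{1}{m+1}$, of determinant $2m+1=l^a$, the order of the defect group; and $\sum_i(\dim L_i)(\dim P_i)$ agrees with $\sum_\chi\chi(1)^2$ summed over the block.

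The one genuinely delicate point, whichever route one takes, is the precise submodule lattice of $P_2$ --- its growth with $l^a$ and the exact placement of the distinguished submodules $S'$ and $S''$. Teasing this out of the Brauer tree requires the full strength of Janusz's theorem together with some careful bookkeeping, so in this paper the efficient course is simply to invoke Ackermann's explicit computation.
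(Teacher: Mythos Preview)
The paper gives no proof of this lemma at all: it is stated with the citation to \cite[Theorem 4.2]{ackermann2006loewy} in the header and then used as a black box. Your proposal correctly identifies this, and your opening paragraph alone already matches the paper's treatment.

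What you add beyond the paper --- the derivation via cyclic defect and the Brauer tree, Janusz's theorem, and the Cartan-matrix sanity check --- is correct and genuinely informative, but it is extra: the paper never attempts any of it. One organisational point worth flagging is that your ``dictionary'' anticipates later results. At the point of this lemma the paper treats $L_1,L_2,P_1,P_2$ as abstract labels for the two simples and their projective covers; the identifications $L_1=\triv$ and $P_1=\finproj$ are established only in the subsequent lemmas and corollary (via Frobenius reciprocity, the Mackey argument that $\End(\finproj)$ is two-dimensional, and the analysis of the Gelfand--Graev module). So while your identifications are right, in the paper's logical flow they are consequences of this lemma together with further work, not part of its statement or proof. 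If you want your write-up to slot in where the lemma sits, you should either defer those identifications or note explicitly that you are front-loading material the paper develops afterwards.
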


Note that we have labelled certain subrepresentations, which we will be referring to later.

\begin{corollary}\label{quiver algebra description}
    There is some positive integer $m$ such that the endomorphism algebra of $P_1\oplus P_2$ is generated by the morphisms
    \[
    \begin{tikzcd}
        P_1 & P_2
        \arrow["\alpha", curve={height=-12pt}, from=1-1, to=1-2]
        \arrow["\beta", curve={height=-12pt}, from=1-2, to=1-1]
        \arrow["\gamma", from=1-2, to=1-2, loop, in=325, out=35, distance=10mm]
    \end{tikzcd}
    \]
    which satisfy the relations $\gamma\beta=0$, $\alpha\gamma=0$, and $\beta\alpha=\gamma^m$.
\end{corollary}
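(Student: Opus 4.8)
The plan is to extract the presentation from the explicit module data of \cref{structures of projectives}, via the standard dictionary between $\End(P_1\oplus P_2)$ and the quiver-with-relations of the basic algebra of $\uniblockfin$. First I would write $\End(P_1\oplus P_2)=\bigoplus_{i,j}\Hom(P_i,P_j)$ and use that, $\coeffs$ being algebraically closed and $P_i$ the projective cover of $L_i$, one has $\dim_\coeffs\Hom(P_i,P_j)=[P_j:L_i]$. Counting composition factors in \cref{structures of projectives} then gives $\dim\Hom(P_1,P_1)=2$, $\dim\Hom(P_1,P_2)=\dim\Hom(P_2,P_1)=1$ and $\dim\Hom(P_2,P_2)=[P_2:L_2]$; set $m:=[P_2:L_2]-1$, which is a positive integer. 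Since the number of arrows $P_i\to P_j$ in the quiver is the multiplicity of $L_i$ in $\rad(P_j)/\rad^2(P_j)$, and \cref{structures of projectives} shows $\rad(P_1)/\rad^2(P_1)=L_2$ and $\rad(P_2)/\rad^2(P_2)=L_1\oplus L_2$, the quiver is exactly the one displayed: one arrow $\alpha\colon P_1\to P_2$, one arrow $\beta\colon P_2\to P_1$, one loop $\gamma\colon P_2\to P_2$, and no loop at $P_1$. Hence $\End(P_1\oplus P_2)$ is a quotient of the path algebra of this quiver, and we may take $\alpha,\beta$ to span the one-dimensional spaces $\Hom(P_1,P_2),\Hom(P_2,P_1)$ and $\gamma$ to generate the radical of the local algebra $\End(P_2)$.

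Two of the relations are formal. Since $\alpha\gamma\in\Hom(P_1,P_2)=\coeffs\alpha$ and $\gamma\beta\in\Hom(P_2,P_1)=\coeffs\beta$, we have $\alpha\gamma=c\alpha$ and $\gamma\beta=c'\beta$ for scalars $c,c'$, whence $\alpha\gamma^k=c^k\alpha$ and $\gamma^k\beta=c'^k\beta$; as $\gamma$ is nilpotent (lying in the radical of the finite-dimensional local algebra $\End(P_2)$), the left sides vanish for large $k$, so $c=c'=0$. For the third relation, one-dimensionality of $\rad\End(P_2)/\rad^2\End(P_2)$ — there is a single loop at $P_2$ — makes $\mathrm{id}_{P_2},\gamma,\dots,\gamma^m$ a basis of $\End(P_2)$ with $\gamma^{m+1}=0$, and, $\gamma$ being nilpotent, the submodules $P_2\supsetneq\gamma(P_2)\supsetneq\cdots\supsetneq\gamma^m(P_2)\supsetneq 0$ are pairwise distinct (if $\gamma(\gamma^k P_2)=\gamma^k P_2$ then $\gamma$ acts invertibly on the finite-dimensional $\gamma^k P_2$, forcing it to be zero). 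Moreover $\beta\alpha$ is a non-unit in $\End(P_2)$, as otherwise $P_2$ would be a direct summand of $P_1$; so once we know $\beta\alpha\neq 0$ we may write $\beta\alpha=c_j\gamma^j+(\text{higher powers})$ with $c_j\neq 0$ and $1\leq j\leq m$, and factoring off the unit $c_j+c_{j+1}\gamma+\cdots$ gives $\im(\beta\alpha)=\gamma^j(P_2)$.

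It therefore suffices to show that the composite $P_2\xrightarrow{\beta}P_1\xrightarrow{\alpha}P_2$ is nonzero with image the (necessarily simple) socle $L_2$ of $P_2$: then $\gamma^j(P_2)$ is simple, which with the strict chain above forces $j=m$, and rescaling $\alpha$ yields $\beta\alpha=\gamma^m$. To compute this image I use \cref{structures of projectives} once more. The map $\beta$ is not surjective, since $[P_2:L_1]=1<2=[P_1:L_1]$ prevents $P_1$ being a quotient of $P_2$; so $\im\beta\subseteq\rad(P_1)$, and since $\im\beta$ is a nonzero quotient of $P_2$ its head is $L_2$, whence $\im\beta=\rad(P_1)$ — the only nonzero submodule of the uniserial module $\rad(P_1)$ with head $L_2$. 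Dually $\alpha$ is not injective (same inequality), so $\ker\alpha$ contains the simple socle of $P_1$ and $\im\alpha$ is a proper quotient of $P_1$; it is therefore $P_1/\rad^2(P_1)$ or $L_1$, and the latter is excluded because $P_2$ has simple socle $L_2$ and hence no submodule isomorphic to $L_1$. Thus $\im\alpha\cong P_1/\rad^2(P_1)$ and $\ker\alpha=\rad^2(P_1)\subseteq\rad(P_1)$, so $\alpha$ annihilates $\rad^2(P_1)$ and the composite maps $P_2$ onto $\alpha(\rad(P_1))=\rad(\im\alpha)$; this is one-dimensional (the uniserial $\im\alpha$ has radical $\cong L_2$), hence is the unique simple submodule of $P_2$, namely its socle $L_2$ — as required.

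Finally, that the three relations give a full presentation follows from a dimension count: in the quotient of the path algebra by $\alpha\gamma$, $\gamma\beta$ and $\beta\alpha-\gamma^m$ one deduces $\gamma^{m+1}=0$ and $\alpha\beta\alpha=0=\beta\alpha\beta$, so every path reduces to one of the two vertex idempotents, $\alpha$, $\beta$, $\alpha\beta$, or $\gamma^k$ for $1\leq k\leq m$; the quotient thus has dimension at most $m+5=\dim_\coeffs\End(P_1\oplus P_2)$, so the canonical surjection onto $\End(P_1\oplus P_2)$ is an isomorphism. The step I expect to be the real obstacle is establishing $\beta\alpha\neq 0$, i.e.\ that the composite $P_2\to P_1\to P_2$ does not vanish: this is exactly the point where \cref{structures of projectives} is needed to see that the socle of $P_2$ is simple (forcing $\im\alpha$ to have length two rather than one) and to fit this against $\im\beta=\rad(P_1)$; the remainder is bookkeeping with composition factors, the local structure of $\End(P_2)$, and nilpotence.
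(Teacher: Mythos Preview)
Your argument is correct and follows essentially the same route as the paper: read off the structure of $\End(P_1\oplus P_2)$ from the submodule lattices in \cref{structures of projectives}. The paper's proof is a one-sentence sketch that simply names $\alpha,\beta,\gamma$ by their images ($S''$, $\rad(P_1)$, $S'$ respectively, using the labels in the lattice diagram) and asserts the relations; you instead recover the same three maps abstractly via the composition-factor count $\dim\Hom(P_i,P_j)=[P_j:L_i]$ and the radical-layer description of the quiver, and then supply the verifications the paper omits (nilpotence forcing $\alpha\gamma=\gamma\beta=0$, the image computation giving $\im(\beta\alpha)=\soc(P_2)$, the strict chain $\gamma^i(P_2)$ forcing $j=m$, and the final dimension count for the presentation). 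Your version is more self-contained; the paper's buys brevity by leaning on the reader's familiarity with reading maps off Brauer-tree-type lattices.
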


\begin{proof}
    We can see from \cref{structures of projectives} that the maps between $P_1$ and $P_2$ are spanned by firstly the map $\alpha:P_1\rightarrow P_2$ with image $S''$, secondly the map $\beta:P_2\rightarrow P_1$ with image $\rad(P_1)$, and finally the map $\gamma:P_2\rightarrow P_2$ with image $S'$. These then have the relations exactly as above.
\end{proof}

\begin{corollary}
    Each $\End(P_i)$ is at least two-dimensional.
\end{corollary}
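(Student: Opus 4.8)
The plan is to read off the dimension of each $\End(P_i)$ directly from the subrepresentation lattices in \cref{structures of projectives}, using the standard fact that $\dim_{\coeffs}\Hom(P_i,M)$ equals the multiplicity of $L_i$ as a composition factor of $M$. Applying this with $M=P_i$ reduces the claim to counting composition factors.

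First I would handle $P_1$. Its subrepresentation lattice has three layers, with composition factors (from top) $L_1$, $L_2$, $L_1$. Hence $L_1$ occurs with multiplicity $2$ in $P_1$, so $\dim_{\coeffs}\End(P_1)=\dim_{\coeffs}\Hom(P_1,P_1)=2$. Next I would handle $P_2$: its lattice visibly has more than two composition factors equal to $L_2$ (one sees at least the top $L_2$, the $L_2$ labelled at the node $S'\to\bullet$, and the socle $L_2$), so $\dim_{\coeffs}\End(P_2)=\dim_{\coeffs}\Hom(P_2,P_2)\ge 2$; in fact the displayed lattice is enough to see it is strictly larger, but $\ge 2$ is all that is needed. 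Alternatively, and perhaps cleaner for the write-up, I would invoke \cref{quiver algebra description}: the loop $\gamma$ at $P_2$ together with the identity gives two linearly independent elements of $\End(P_2)$ (they are independent since $\gamma$ is nilpotent and nonzero, as $\gamma^m = \beta\alpha \ne 0$ would need checking, but independence of $1$ and $\gamma$ only needs $\gamma\ne 0$, which holds as $\gamma$ has image $S''\ne 0$), and $1, \gamma \in \End(P_1)$ is not available since there is no loop at $P_1$ — so for $P_1$ one really does fall back on the composition-factor count, giving exactly $2$.

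The main (and only) obstacle is bookkeeping: making sure the multiplicity count of $L_1$ in $P_1$ and of $L_2$ in $P_2$ is read correctly from the diagrams, and being careful that $\Hom(P_i,P_i)$ is computed over $\coeffs$ (which is fine since $\coeffs$ is algebraically closed, so all the $\End(L_i)=\coeffs$). No deeper input is required; this corollary is essentially an immediate consequence of \cref{structures of projectives} and \cref{quiver algebra description}.
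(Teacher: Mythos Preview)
Your proposal is correct and follows essentially the same approach as the paper: both arguments read off a second endomorphism of each $P_i$ from \cref{structures of projectives} and \cref{quiver algebra description}. The paper exhibits the identity together with $\alpha\beta\in\End(P_1)$ and $\beta\alpha\in\End(P_2)$ directly, while you count composition factors for $P_1$ and use $\gamma$ for $P_2$; these are minor variations on the same observation. One small slip: the image of $\gamma$ is $S'$, not $S''$ (it is $\alpha$ that has image $S''$), but this does not affect the argument since both are nonzero.
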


\begin{proof}
    Each $P_i$ has its identity map, and the maps $\alpha\beta$ and $\beta\alpha$ respectively.
\end{proof}

To obtain the rest of the structure of $\hecfinuni$, we shall find an explicit description of $P_1$ and $L_1$.

\begin{lemma}
    $\finproj$ is projective, indecomposable, and lies in $\uniblockfin$. It has a unique trivial submodule and a unique trivial quotient.
\end{lemma}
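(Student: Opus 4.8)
The plan is to read off all four properties from Frobenius reciprocity together with the Bruhat decomposition of $\fingp$, invoking \cref{structures of projectives} only at the end to rule out a stray direct summand.

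\emph{Projectivity and the size of the endomorphism ring.} Since $l=\charstc(\coeffs)$ divides neither $q$ nor $q-1$, it does not divide $\lvert\finbor\rvert=q(q-1)^2$, so $\coeffs[\finbor]$ is semisimple and $\triv$ is a direct summand of the regular representation $\coeffs[\finbor]$. Applying the exact functor $\ind_{\finbor}^{\fingp}$ exhibits $\finproj$ as a direct summand of $\ind_{\finbor}^{\fingp}\coeffs[\finbor]\cong\coeffs[\fingp]$, hence as a projective $\coeffs[\fingp]$-module. For the endomorphism ring I would use that $\End_{\fingp}(\finproj)$ is the finite Iwahori--Hecke algebra $e\,\coeffs[\fingp]\,e$, with $e=\lvert\finbor\rvert^{-1}\sum_{b\in\finbor}b$, whose $\coeffs$-basis is indexed by the double cosets $\finbor\backslash\fingp/\finbor$; by the Bruhat decomposition these correspond to $\finweyl=\{1,s\}$, so $\dim_{\coeffs}\End_{\fingp}(\finproj)=2$. (Equivalently, Mackey's formula gives $\res_{\finbor}\ind_{\finbor}^{\fingp}\triv\cong\triv\oplus\ind_{\fintor}^{\finbor}\triv$, and Frobenius reciprocity turns this into a two-dimensional Hom space.)

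\emph{The unique trivial sub- and quotient-modules.} Frobenius reciprocity (induction left adjoint to restriction) gives $\Hom_{\fingp}(\finproj,\triv)\cong\Hom_{\finbor}(\triv,\triv)=\coeffs$. Every nonzero map $\finproj\to\triv$ is surjective, and proportional maps have equal kernels, so $\triv$ is a quotient of $\finproj$ and it is the only submodule with quotient isomorphic to $\triv$. Dually, since $\finbor$ has finite index the functor $\ind_{\finbor}^{\fingp}$ is also right adjoint to restriction, so $\Hom_{\fingp}(\triv,\finproj)\cong\Hom_{\finbor}(\triv,\triv)=\coeffs$; as $\triv$ is simple, every nonzero such map is injective, and proportional maps have equal images, so $\finproj$ has a unique submodule isomorphic to $\triv$.

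\emph{Indecomposability and the block.} Because the projective module $\finproj$ has $\triv$ as a quotient, the projective cover of $\triv$ is a direct summand of it. That projective cover lies in the block of $\triv$, namely $\uniblockfin$, so it is one of $P_1,P_2$ of \cref{structures of projectives}; and each of these has endomorphism ring of $\coeffs$-dimension at least $2$ (the identity together with $\alpha\beta$, respectively $\beta\alpha$, in the notation of \cref{quiver algebra description}). Since $\dim_{\coeffs}\End_{\fingp}(\finproj)=2$, there is no room for any further direct summand, so $\finproj$ is indecomposable, equals this projective cover, and in particular lies in $\uniblockfin$. Comparing its head and socle with the two pictures in \cref{structures of projectives} then forces $\finproj\cong P_1$ and $L_1\cong\triv$, which is the explicit description wanted in the sequel.

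\emph{Expected main obstacle.} There is no deep difficulty here; the one point needing care is indecomposability, i.e.\ ruling out that $\finproj$ splits as a sum of two non-isomorphic projectives. This is exactly where the non-banal hypothesis enters: via \cref{structures of projectives} (both indecomposable projectives of $\uniblockfin$ have $\geq 2$-dimensional endomorphism rings) as above, or, self-containedly, via the observation that in the present case $\End_{\fingp}(\finproj)$ is the finite Hecke algebra $\coeffs[X]/\big((X-q)(X+1)\big)$, which is local precisely because $q\equiv-1\pmod l$.
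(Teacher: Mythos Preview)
Your argument is correct and follows essentially the same route as the paper: projectivity from $l\nmid\lvert\finbor\rvert$, a two-dimensional endomorphism ring via Bruhat/Mackey, a trivial quotient via Frobenius reciprocity, and indecomposability by comparing $\dim\End(\finproj)=2$ with $\dim\End(P_i)\geq 2$. Your use of the right adjunction to obtain the unique trivial submodule directly, and your alternative self-contained observation that $\End_{\fingp}(\finproj)\cong\coeffs[X]/\big((X-q)(X+1)\big)$ is local when $q\equiv-1\pmod l$, are small enrichments rather than different strategies.
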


\begin{proof}
    Since $l$ does not divide $q$ or $q-1$, the trivial representation $\triv$ on $\finbor$ is projective, and hence $\finproj$ is projective.
    
    By Frobenius reciprocity, we have that $\Hom_{\fingp}(\finproj,\finproj)=\Hom_{\finbor}(\triv,\res_{\finbor}^{\fingp}\finproj)$. Applying the Mackey decomposition and using the Bruhat decomposition $\fingp= \finbor \sqcup \finbor \stwotwo{0}{1}{1}{0} \finbor$ we hence obtain 
    \[\Hom_{\fingp}(\finproj,\finproj)=\Hom_{\finbor}(\triv,\triv)\oplus\Hom_{\finbor}(\triv,\ind_{\fintor}^{\finbor}\triv)=\Hom_{\finbor}(\triv,\triv)\oplus\Hom_{\fintor}(\triv,\triv).\]
    Hence $\End(\finproj)$ is two-dimensional.
    
    By Frobenius reciprocity again, we get $\Hom_{\fingp}(\finproj, \triv)=\Hom_{\finbor}(\triv,\triv)$, and so $\finproj$ has a trivial quotient. As $\triv$ is simple it is some $L_i$, and so we may lift the quotient $\finproj\rightarrow L_i$ to a surjective map $\finproj\rightarrow P_i$, which splits as $P_i$ is projective. But as $\End(P_i)$ is at least two-dimensional, it must be all of $\End(\finproj)$, and so in particular $\finproj=P_i$ is indecomposable.
    
    $\finproj$ lies in $\uniblockfin$ as it is indecomposable and has a trivial submodule.
\end{proof}

\begin{lemma}
    The projective $P_2$ occurs once in an indecomposable decomposition of $\ind_{\finuni}^{\fingp} \chi$, and $P_1$ does not occur at all. $L_2$ is cuspidal.
\end{lemma}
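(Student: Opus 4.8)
The plan is to read everything off the explicit structure of $\uniblockfin$ in \cref{structures of projectives} — recalling from the previous lemma (together with comparison against \cref{structures of projectives}) that $\finproj=P_1$ and hence $L_1=\triv$ — combined with Frobenius reciprocity and the (semisimple) representation theory of the abelian group $\finuni\cong(\resfield,+)$. Projectivity is immediate: since $l\nmid q=|\finuni|$, the algebra $\coeffs\finuni$ is semisimple, so the one-dimensional module $\chi$ is projective over $\coeffs\finuni$, and $\ind_{\finuni}^{\fingp}$ takes projectives to projectives; hence $\ind_{\finuni}^{\fingp}\chi$ is a projective $\coeffs\fingp$-module, so a direct sum of projective indecomposables. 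By Krull--Schmidt it then suffices to show that $P_1$ occurs in it with multiplicity $0$ and $P_2$ with multiplicity $1$.

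The key input is the restriction of the simples to $\finuni$. Since $\finproj=\ind_{\finbor}^{\fingp}\triv$ is the permutation module on $\mb{P}^1(\resfield)=\fingp/\finbor$, it has dimension $q+1$, and $\finuni$ has exactly two orbits on $\mb{P}^1(\resfield)$ (the fixed line and the remaining $q$ lines), so $\dim\finproj^{\finuni}=2$. The functor $M\mapsto M^{\finuni}$ is exact because $l\nmid|\finuni|$, so applying it to the composition series $(\triv,L_2,\triv)$ of $\finproj=P_1$ gives $2=2\dim\triv^{\finuni}+\dim L_2^{\finuni}=2+\dim L_2^{\finuni}$, whence $L_2^{\finuni}=0$; comparing total dimensions gives $\dim L_2=q-1$. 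As $\finuni$ is the unipotent radical of the unique proper parabolic of $\GL_2$, the vanishing $L_2^{\finuni}=0$ is exactly the statement that $L_2$ is cuspidal.

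For the multiplicities I would write $\res_{\finuni}L_2=\bigoplus_{\psi}\psi^{\oplus m_\psi}$ over the characters $\psi$ of $\finuni$ (possible since $\coeffs\finuni$ is semisimple and $\finuni$ abelian with $\coeffs$ algebraically closed); then $m_{\triv}=\dim L_2^{\finuni}=0$. The torus $\fintor$ normalises $\finuni$ and acts on $L_2$, and its conjugation action on $\finuni\cong\resfield$ is by scaling, hence transitive on the nontrivial characters; so the $m_\psi$ with $\psi\neq\triv$ are all equal to a single value $m$, and $q-1=\dim L_2=(q-1)m$ forces $m=1$. Thus $\dim\Hom_{\finuni}(\chi,\res_{\finuni}L_2)=1$, while $\dim\Hom_{\finuni}(\chi,\res_{\finuni}L_1)=\dim\Hom_{\finuni}(\chi,\triv)=0$ since $\chi\neq\triv$. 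Finally, for any projective $\coeffs\fingp$-module $Q$ the multiplicity of $P_i$ as a direct summand equals $\dim\Hom_{\fingp}(Q,L_i)$ (because $\Hom_{\fingp}(P_j,L_i)=\delta_{ij}\coeffs$ over the algebraically closed $\coeffs$), and Frobenius reciprocity identifies $\Hom_{\fingp}(\ind_{\finuni}^{\fingp}\chi,L_i)\cong\Hom_{\finuni}(\chi,\res_{\finuni}L_i)$. Hence $P_1$ does not occur and $P_2$ occurs exactly once. (The indecomposable summands of $\ind_{\finuni}^{\fingp}\chi$ lying outside $\uniblockfin$ contribute nothing to $\Hom_{\fingp}(-,L_i)$ for $i=1,2$, so they do not interfere with the count.)

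The only genuinely non-formal step is the multiplicity-one argument — concluding from $\dim L_2=q-1$, $L_2^{\finuni}=0$, and the transitive $\fintor$-action on the nontrivial characters of $\finuni$ that each such character occurs exactly once in $\res_{\finuni}L_2$. Everything else is bookkeeping with Frobenius reciprocity and the already-established structure of $\uniblockfin$; the one point one must be careful to invoke is $L_1=\triv$ (equivalently $\finproj=P_1$) from the preceding lemma, which is what licenses identifying the composition factors of $\finproj$ and hence the value $\dim L_2=q-1$.
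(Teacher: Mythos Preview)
Your argument is direct and elementary, quite different from the paper's, which simply cites \cite{ackermann2006loewy}: their Corollary~4.3 identifies $P_2$ and $L_2$ with the Steinberg-PIM and the cuspidal simple $D_{\resfield}(1,1^2)$, and the discussion around their Propositions~2.2 and~2.6 then supplies the projectivity, multiplicity, and cuspidality statements wholesale. Your route via $\dim\finproj^{\finuni}=2$, exactness of $\finuni$-invariants, and $\fintor$-transitivity on the nontrivial characters of $\finuni$ is a genuinely self-contained alternative and arguably more illuminating.

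One caveat on logical order: you invoke $\finproj=P_1$ and $L_1=\triv$ at the outset, attributing this to the previous lemma together with \cref{structures of projectives}. In the paper this identification is made only in a \emph{later} corollary, and that corollary's proof uses the present lemma (the fact that $P_2$ is a summand of $\ind_{\finuni}^{\fingp}\chi$ is precisely what rules out $L_2=\triv$ there). The previous lemma gives only that $\finproj$ is indecomposable projective with trivial head and socle and $\dim\End(\finproj)=2$; comparison with \cref{structures of projectives} forces $\finproj=P_1$ only once one knows $\dim\End(P_2)=m+1>2$. When $m=1$ (that is, $l=3$ with $3$ exactly dividing $q+1$) both projectives are uniserial of length three with two-dimensional endomorphism rings, and the structures alone do not fix the labelling. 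The repair is small --- cite Ackermann's explicit labelling of $L_1,L_2$, or note that your own constraint $2=\dim\finproj^{\finuni}\geq[\finproj:\triv]$ already excludes $\finproj=P_2$ whenever $m\geq2$ --- but as written the dependency is circular within the paper's development and should be made explicit.
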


\begin{proof}
    \cite{ackermann2006loewy} Corollary 4.3 identifies $P_2$ and $L_2$ with respectively what they call the Steinberg-PIM and $D_{\resfield}(1,1^2)$. The discussion following Proposition 2.6 of the same then says that the Steinberg-PIM occurs with multiplicity one in the indecomposable decomposition of the Gelfand-Graev module, and that no other projective indecomposable modules in the unipotent block occur in the same. Finally, Proposition 2.2 of the same states that $D_{\resfield}(1,1^2)$ is cuspidal.
\end{proof}

\begin{corollary}
    We have $\finproj=P_1$ and $L_1=\triv$.
    
    Furthermore, $P_1$ occurs with multiplicity 1 and $P_2$ with multiplicity zero in any direct sum decomposition of $\ind_{\finuni}^{\fingp} \triv$.
\end{corollary}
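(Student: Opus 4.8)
The plan is to pin down which of $P_1,P_2$ is $\finproj$, deduce $L_1=\triv$ as an immediate consequence, and then obtain the two multiplicities by a short Frobenius-reciprocity count.

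\emph{Identifying $\finproj$ and $L_1$.} Combining \cref{structures of projectives} with the lemma that $\finproj$ is projective and indecomposable in $\uniblockfin$, we have $\finproj\cong P_j$ for some $j\in\{1,2\}$. Since $\finproj$ has a trivial quotient and $P_j$ has unique simple quotient $L_j$, this forces $L_j=\triv$. It remains to rule out $j=2$, i.e. to see $L_2\neq\triv$; but $L_2$ is cuspidal whereas $\triv$ is not, because $\triv$ is a quotient of the parabolically induced module $\finproj=\parind_{\fintor,\finbor}^{\fingp}(\ind_1^{\fintor}1)$ (by the preceding Proposition), so its Jacquet module with respect to $\finbor$ is nonzero. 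Hence $j=1$, giving $\finproj=P_1$ and $L_1=\triv$ simultaneously.

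\emph{The multiplicities.} By the preceding Proposition, $M:=\ind_{\finuni}^{\fingp}\triv$ is projective, so over the algebraically closed field $\coeffs$ the multiplicity of an indecomposable projective $P_j$ in any decomposition of $M$ equals $\dim_{\coeffs}\Hom_{\fingp}(M,L_j)$. For $j=1$, Frobenius reciprocity and $L_1=\triv$ give $\Hom_{\fingp}(M,L_1)=\Hom_{\fingp}(M,\triv)=\Hom_{\finuni}(\triv,\res_{\finuni}^{\fingp}\triv)=\coeffs$, so $P_1$ occurs exactly once. For $j=2$, Frobenius reciprocity gives $\Hom_{\fingp}(M,L_2)=\Hom_{\finuni}(\triv,\res_{\finuni}^{\fingp}L_2)=(L_2)^{\finuni}$; since $\finuni$ is a $p$-group and $l\neq p$, the algebra $\coeffs[\finuni]$ is semisimple, so $(L_2)^{\finuni}$ is isomorphic to the $\finuni$-coinvariants $(L_2)_{\finuni}$, which vanish because $L_2$ is cuspidal. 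Hence $P_2$ does not occur. (An alternative for $j=2$: $M$ is parabolically induced from $\fintor$, so if $P_2$ were a summand its head $L_2$ would be a quotient of $\parind_{\fintor,\finbor}^{\fingp}(\ind_1^{\fintor}1)$, contradicting cuspidality of $L_2$.)

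The only step needing any care is the translation between the cited notion of ``$L_2$ cuspidal'' and the concrete vanishing $(L_2)^{\finuni}=(L_2)_{\finuni}=0$ actually used in the computation; once that is in hand the argument is a short chain of adjunctions, and I anticipate no substantive obstacle.
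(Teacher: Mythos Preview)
Your proof is correct and follows essentially the same approach as the paper's: Frobenius reciprocity together with the cuspidality of $L_2$. The only cosmetic differences are that you count simple quotients (via $\Hom_{\fingp}(M,L_j)$) where the paper counts simple submodules (via $\Hom_{\fingp}(L_j,M)$), and you rule out $L_2=\triv$ by observing that $\triv$ is a quotient of a proper parabolic induction, whereas the paper instead notes that $\triv$ does not embed in the Gelfand--Graev module $\ind_{\finuni}^{\fingp}\chi$.
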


\begin{proof}
    By Frobenius reciprocity, $\Hom_{\fingp}(\triv,\ind_{\finuni}^{\fingp} \chi)=\Hom_{\finuni}(\triv,\chi)=0$, so there are no trivial submodules in $\ind_{\finuni}^{\fingp} \chi$. Thus $L_2$ cannot be $\triv$, so $L_1=\triv$, and since we know $\finproj$ is an indecomposable projective with a unique trivial submodule and quotient we thus know $\finproj=P_1$ by \cref{structures of projectives}.
    
    Similarly, Frobenius Reciprocity also gives us $\Hom_{\fingp}(\triv,\ind_{\finuni}^{\fingp} \triv)=\Hom_{\finuni}(\triv,\triv)=\coeffs$, so there is one submodule of $\ind_{\finuni}^{\fingp}\triv$ isomorphic to $\triv$. Thus, as $\ind_{\finuni}^{\fingp}\triv$ is projective, a direct sum decomposition of it into indecomposable projective modules must contain a single direct summand isomorphic to $P_1$.

    As $L_2$ is cuspidal and $\ind_{\finuni}^{\fingp}\triv$ is parabolically induced from a proper parabolic subgroup, we have that $\Hom(L_2,\ind_{\finuni}^{\fingp} \triv)=0$, and so $\ind_{\finuni}^{\fingp} 1$ contains no submodules isomorphic to $L_2$, and so no direct summands isomorphic to $P_2$.
\end{proof}

\begin{corollary}\label{projective decomposition}
   $\hecfinuni$ can be decomposed as a direct sum of $P_1$ and $q-1$ copies of $P_2$.
\end{corollary}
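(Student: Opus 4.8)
The plan is to combine the standard block decomposition of a group algebra over an algebraically closed field with a one-line dimension count. First I would recall that, since $\coeffs$ is algebraically closed, the group algebra $\hecfin$ decomposes as a left module over itself as $\bigoplus_S P_S^{\dim_\coeffs S}$, the sum running over the simple $\fingp$-modules $S$ with projective cover $P_S$; this is the usual consequence of lifting orthogonal idempotents through the Wedderburn decomposition $\hecfin/\jacrad(\hecfin)\cong\prod_S\End_\coeffs(S)$. The unipotent block $\hecfinuni$ is a two-sided ideal direct summand of $\hecfin$ (cut out by a central idempotent), so restricting this decomposition to the block, and using \cref{structures of projectives} (which records that the only simples there are $L_1$ and $L_2$), gives $\hecfinuni\cong P_1^{\dim_\coeffs L_1}\oplus P_2^{\dim_\coeffs L_2}$.

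It then remains to compute the two dimensions. We have already identified $L_1=\triv$, so $\dim_\coeffs L_1=1$. For $L_2$ I would play off the two descriptions of $P_1$ now available: on the one hand $P_1=\finproj=\ind_{\finbor}^{\fingp}\triv$, so $\dim_\coeffs P_1=[\fingp:\finbor]=|\mathbb{P}^1(\resfield)|=q+1$; on the other hand \cref{structures of projectives} says $P_1$ is uniserial with composition factors $L_1,L_2,L_1$, whence $\dim_\coeffs P_1=2\dim_\coeffs L_1+\dim_\coeffs L_2=2+\dim_\coeffs L_2$. Comparing the two yields $\dim_\coeffs L_2=q-1$, and substituting into the displayed decomposition gives $\hecfinuni\cong P_1\oplus P_2^{\,q-1}$, as claimed.

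There is no real obstacle here: the only piece of genuine content is the value $\dim_\coeffs L_2=q-1$, which falls out immediately from the fact that $P_1$ is simultaneously the parabolically induced projective $\ind_{\finbor}^{\fingp}\triv$ (of dimension $q+1$) and uniserial of the shape recorded in \cref{structures of projectives}; everything else is the standard structure theory of group algebras and their blocks over an algebraically closed field. If one prefers to avoid even invoking the block decomposition abstractly, the same conclusion follows by noting that the multiplicities $n_1,n_2$ in $\hecfinuni\cong P_1^{n_1}\oplus P_2^{n_2}$ satisfy $n_i=\dim_\coeffs\Hom_{\hecfin}(\hecfinuni,L_i)=\dim_\coeffs L_i$, and then running the same dimension count.
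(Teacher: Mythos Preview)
Your argument is correct and is essentially the same as the paper's: the paper's proof just records $\dim L_1=1$, $\dim P_1=q+1$, and deduces $\dim L_2=q-1$ from the composition series in \cref{structures of projectives}, leaving the standard fact $\hecfinuni\cong P_1^{\dim L_1}\oplus P_2^{\dim L_2}$ implicit. You have simply spelled out that implicit step and the index computation $[\fingp:\finbor]=q+1$.
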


\begin{proof}
    $\dim(L_1)=1$ and $\dim(P_1)=q+1$, from which we may conclude from \cref{structures of projectives} that $\dim{L_2}=q-1$.
\end{proof}

\begin{definition}
    Write $\fourg=\triv\oplus P_1$.

    Let $\finanni$ be the annihilator of $P_1$ in $\hecfin$. Then put $\fannig=\funig/\finanni\funig$.
\end{definition}

We make some immediate observations.

 \begin{proposition}
    $\fourg$ lives in $\uniblockfin$, and so $\finanni$ contains everything in the non-unipotent blocks. Thus, $\fannig$ lies in $\uniblockfin$.
 \end{proposition}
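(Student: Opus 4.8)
The plan is to unwind the definitions using the block decomposition $\hecfin = \hecfinuni \oplus \hecfin'$, where $\hecfin'$ denotes the sum of the non-unipotent blocks; let $e$ be the (central) block idempotent with $\hecfinuni = e\hecfin$, so that $\hecfin' = (1-e)\hecfin$ and an $\hecfin$-module $M$ lies in $\uniblockfin$ exactly when $e$ acts as the identity on $M$, equivalently when $\hecfin'$ annihilates $M$.

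First I would observe that $\fourg = \triv \oplus P_1$ lies in $\uniblockfin$: we have already identified $\triv$ with $L_1$ and $P_1$ with $\finproj$, and both were shown to lie in $\uniblockfin$, so $e$ acts as the identity on each summand and hence on $\fourg$. Since $1-e$ then acts as zero on $P_1$, we get $\hecfin' = (1-e)\hecfin \subseteq \finanni$, which is the second assertion; note in passing that $1-e$ also kills $\triv$, so $\finanni$ is in fact the full annihilator of $\fourg$. Finally, $\fannig = \funig/\finanni\funig$ is by construction a quotient $\hecfin$-module annihilated by $\finanni$, hence by $\hecfin' \subseteq \finanni$, so $e$ acts as the identity on $\fannig$; thus $\fannig$ is an $\hecfinuni$-module and lies in $\uniblockfin$.

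There is no real obstacle here; the only point requiring a moment's care is that $\finanni$, being the annihilator of a module, is automatically a two-sided ideal, so that $\finanni\funig$ is a genuine submodule of $\funig$ and $\fannig$ is a well-defined quotient module on which $\finanni$ — and in particular the non-unipotent part $\hecfin'$ — acts as zero.
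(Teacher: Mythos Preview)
Your proof is correct and follows the same approach as the paper, which simply notes that both summands of $\fourg$ are $P_1$ and $L_1$ and states that the remaining claims follow immediately; you have just unpacked these immediate consequences via the block idempotent $e$. One small quibble: your parenthetical remark that $\finanni$ equals the full annihilator of $\fourg$ does not follow merely from $1-e$ killing $\triv$ (you would need that \emph{all} of $\finanni$ kills $\triv$, which is true since $\triv=L_1$ is a quotient of $P_1$), but this aside is not needed for the proposition.
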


\begin{proof}
    Both summands of $\fourg$ are unipotent as they are $P_1$ and $L_1$ above. The remaining claims follow immediately.
\end{proof}

We now seek an explicit description of $\finanni$. Henceforth we fix a decomposition $\hecfinuni\cong\hecfin e_1\oplus\oplus \bigoplus_{i=2}^{q}\hecfin e_i$, where $\hecfin e_i\cong P_2$ for all $i\geq 2$. We thus get a choice of $P_1$ and $P_2$ in $\hecfin$, with corresponding idempotents $e_1$ and $e_2$, and we consider the homomorphisms $\alpha$, $\beta$, $\gamma$ of \cref{quiver algebra description} as elements of $\hecfin e_1\oplus \hecfin e_2$ via $\Hom(\hecfin e_i,\hecfin e_j)\cong e_i\hecfin e_j$. Write $f_i$ and $g_i$ for a choice of elements of $e_i\hecfin e_2$ and $e_2\hecfin e_i$ respectively that give inverse isomorphisms $\hecfin e_i \cong P_2$, for $i\geq 3$.

\begin{lemma}
    $\finanni$ is generated as an ideal by the non-unipotent blocks and the element $\gamma$.
\end{lemma}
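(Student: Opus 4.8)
The plan is to show the two inclusions $\finanni \supseteq (\text{non-unipotent blocks}) + (\gamma)$ and $\finanni \subseteq (\text{non-unipotent blocks}) + (\gamma)$ separately, working inside $\hecfinuni \cong \hecfin e_1 \oplus \bigoplus_{i=2}^q \hecfin e_i$ where the $e_i$ for $i \geq 2$ give copies of $P_2$. Since we already know $\finanni$ contains everything outside the unipotent block, it suffices to compute the annihilator of $P_1$ inside $\hecfinuni$; equivalently, to identify the two-sided ideal $\finanni \cap \hecfinuni$ of $\hecfinuni$. Via the Morita-type correspondence $\hecfinuni \sim \End(P_1 \oplus P_2)$, this translates into a computation in the quiver algebra of \cref{quiver algebra description} with arrows $\alpha\colon P_1 \to P_2$, $\beta\colon P_2 \to P_1$, $\gamma\colon P_2 \to P_2$ and relations $\gamma\beta = 0$, $\alpha\gamma = 0$, $\beta\alpha = \gamma^m$: there, $\finanni$ corresponds to the set of elements that act as zero on the indecomposable projective at vertex $1$.

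For the easy inclusion, I would first check $\gamma \in \finanni$. The element $\gamma$ is supported in $e_2\hecfin e_2$, and the two-sided ideal it generates inside $\hecfinuni$ consists of paths through the loop $\gamma$ at vertex $2$; by the relations $\gamma\beta = 0$ and $\alpha\gamma = 0$, such a path cannot begin or end at vertex $1$ unless it is trivial, and the surviving composites $\beta\gamma^j\alpha$ land in $e_1\hecfin e_1$ but equal $0$ once we use $\beta\alpha=\gamma^m$ together with $\alpha\gamma=0$ (so $\beta\gamma^j\alpha = 0$ for $j\geq 1$, while $j=0$ gives $\beta\alpha=\gamma^m$, which is again a power of $\gamma$ and hence in the ideal, not an obstruction). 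Reading this off the subrepresentation lattice of $P_1$ in \cref{structures of projectives}: $P_1$ is uniserial with layers $L_1, L_2, L_1$, so the only endomorphisms of $P_1$ are scalars plus the nilpotent $\beta\alpha$, and $\gamma$ annihilates $P_1$ simply because multiplication by $\gamma$ factors through $\hecfin e_2$, which receives no nonzero map from $P_1$ that survives composition with $\gamma$ (again $\alpha\gamma=0$). So the whole ideal generated by the non-unipotent blocks and $\gamma$ sits inside $\finanni$.

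For the reverse inclusion I would argue that the quotient $\hecfinuni / \big((\gamma) + \text{rest}\big)$ already acts faithfully on $P_1$, so nothing more can be in $\finanni$. Concretely, modding out $\gamma$ from the quiver algebra kills $\gamma$, hence also $\beta\alpha = \gamma^m$ (assuming $m \geq 1$, which holds since $\End(P_1)$ is at least two-dimensional forces $\beta\alpha \neq 0$, so $\gamma^m \neq 0$ and $m\geq 1$), collapsing the algebra to the path algebra of $\alpha\colon 1 \to 2$, $\beta\colon 2 \to 1$ modulo $\alpha\beta = 0$ and $\beta\alpha = 0$ — but one must be slightly careful, as this quotient is $3$-dimensional (spanned by $e_1, e_2, \alpha, \beta$... in fact $4$-dimensional) whereas $\hecfinuni$ acting on $P_1$ alone gives only a $2$-dimensional image. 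The resolution is that after killing $\gamma$, the projective $P_2$ becomes $L_2$, and the relevant faithful quotient for $P_1$ is $\End(P_1) \oplus (\text{action on } L_2)$; I would instead phrase it as: any $x \in \finanni \cap \hecfinuni$, written in components $x_{ij} \in e_i \hecfin e_j$, must have $x \cdot (\text{generator of }\hecfin e_1) = 0$, which forces $x_{11}$ and $x_{21}$ to vanish as maps out of $P_1$; since $\End(P_1)$ is two-dimensional with basis $\{1, \beta\alpha\}$ and $\Hom(P_1, P_2)$ is spanned by $\alpha$ with $\alpha$ not annihilating $P_1$, we get $x_{11} \in R\cdot\beta\alpha = R\cdot\gamma^m$ and $x_{21} = 0$; the remaining components $x_{12}, x_{22}, x_{i1}, x_{1i}, x_{ij}$ for $i,j\geq 3$ are unconstrained by the condition "annihilates $P_1$" only insofar as they are automatically in the two-sided ideal generated by $\gamma$ together with the idempotents $e_i$ ($i\geq 3$) — here I would use that $e_i \hecfin e_1$ and $e_1 \hecfin e_i$ with $i \geq 2$ all factor through the socle/radical structure so as to be expressible via $\alpha, \beta, \gamma$ and the isomorphisms $f_i, g_i$, landing in $(\gamma)$ after using $\gamma\beta = \alpha\gamma = 0$.

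The main obstacle is the bookkeeping in this last paragraph: pinning down exactly which homogeneous components $e_i \hecfin e_j$ of an arbitrary element of $\finanni \cap \hecfinuni$ are forced to lie in $(\gamma)$, and checking that the components not visible on $P_1$ at all (those between the redundant copies $\hecfin e_i$, $i \geq 3$, and $\hecfin e_2$) are genuinely captured by the ideal generated by $\gamma$ and the complementary idempotents rather than requiring extra generators. I expect this reduces cleanly once one observes that $\hecfin e_i \cong \hecfin e_2$ for all $i \geq 2$ via the fixed $f_i, g_i$, so the whole "$P_2$-part" of $\hecfinuni$ is Morita-equivalent to a single copy governed by the loop $\gamma$, and every arrow into or out of vertex $1$ composed appropriately with $\gamma$ dies by the two relations $\gamma\beta = 0$ and $\alpha\gamma = 0$ — so the ideal $(\gamma) + (\text{non-unipotent})$ is exactly the kernel of the action on $P_1$.
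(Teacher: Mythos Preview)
Your approach is essentially the paper's: both arguments pass to the Peirce decomposition $\hecfinuni = \bigoplus_{i,j} e_i\hecfin e_j$, use the relations $\gamma\beta=0$, $\alpha\gamma=0$, $\beta\alpha=\gamma^m$ from \cref{quiver algebra description} to see that $\gamma$ kills $P_1$, and then check piece by piece that nothing outside the span of $f_i\gamma^k g_j$ with $k\geq 1$ can annihilate $P_1$. The paper simply writes down an explicit basis of $\hecfinuni$ adapted to this decomposition and verifies the complementary basis elements act nontrivially, which is exactly the ``bookkeeping'' you flag as the main obstacle; it is routine once the basis is on the table.

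Two remarks on your execution. First, with the paper's conventions the product in $\hecfin$ matches path concatenation read left to right, so $\beta\alpha=\gamma^m$ lies in $e_2\hecfin e_2=\End(P_2)$, while the nilpotent endomorphism of $P_1$ is $\alpha\beta\in e_1\hecfin e_1$. Your write-up swaps these in several places (e.g.\ ``$\End(P_1)$ \dots\ basis $\{1,\beta\alpha\}$'', ``$\beta\gamma^j\alpha$ land in $e_1\hecfin e_1$''), which propagates through the component analysis. Second, the ``faithful quotient'' idea you abandon actually works and is the cleanest phrasing: in the basic algebra $(e_1+e_2)\hecfin(e_1+e_2)$, the quotient by $(\gamma)$ is five-dimensional (you omitted $\alpha\beta$), with basis $e_1,e_2,\alpha,\beta,\alpha\beta$, and one checks directly that this acts faithfully on the three-dimensional module $(e_1+e_2)\hecfin e_1$ with basis $e_1,\beta,\alpha\beta$. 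Transporting back through the Morita equivalence with $\hecfinuni$ gives the result without the component bookkeeping.
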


\begin{proof}
    We already know $\finanni$ contains everything in the non-unipotent blocks. We have that $\hecfinuni\cong\bigoplus_{i,j=1}^qe_i\hecfin e_j$. Hence by \cref{quiver algebra description}, we obtain a basis of the unipotent block given by $e_1, \alpha, \alpha g_j, \beta, f_i\beta, \alpha\beta, \gamma^k, f_i\gamma^k, \gamma^kg_j\ f_i\gamma^kg_j$ for $3\leq i,j \leq q$ and $0\leq k\leq m$. Note that here we take $\gamma^0=e_2$.
    
    We first observe that $\gamma P_1=\gamma\hecfin e_1=\gamma e_2\hecfin e_1$. But $e_2\hecfin e_1$ is spanned by $\beta$, and $\gamma\beta=0$, so $\gamma P_1=0$ and hence $\gamma\in\finanni$. As annihilators are two-sided ideals, we thus get that $\finanni$ contains the subspace with basis $\gamma^k, f_i\gamma^k, \gamma^kg_j\ f_i\gamma^kg_j$ for $3\leq i,j \leq q$ and $1\leq k\leq m$.

    Now consider some $\delta\in\hecfinuni$ not in this span. We wish to show that then it does not lie in $\finanni$. It suffices to assume it is a span of $e_1, \alpha, \alpha g_j, \beta, f_i\beta, \alpha\beta, e_2, f_i, g_j, f_ig_j$ for $3\leq i,j \leq q$.

    Now, $\alpha g_jf_ie_2g_jf_i\beta e_1 P_1=\alpha\beta e_1\hecfin e_1\ni\alpha\beta$, and so $\alpha g_jf_ie_2g_jf_i\beta e_1\notin \finanni$. But as $\finanni$ is a two-sided ideal, this means no sub-product of this product is in $\finanni$, so none of $e_1, \alpha, \alpha g_j, \beta, f_i\beta, \alpha\beta, e_2, f_i, g_j, f_ig_j$ are in $\finanni$.

    But $P_1\cong\bigoplus_{i=1}^q e_i\hecfin e_1$, and so we may consider separately the cases $\delta\in e_i\hecfin e_j$ for $1\leq i,j\leq q$. But the subspace of $e_i\hecfin e_j$ containing all possible values of $\delta$ (that is, in the span of $e_1, \alpha, \alpha g_j, \beta, f_i\beta, \alpha\beta, e_2, f_i, g_j, f_ig_j$ for $3\leq i,j \leq q$) is one-dimensional and spanned by an element not in $\finanni$, except when $i=j=1$. Hence we only need to consider $\delta=\lambda_1e_1+\lambda_2\alpha\beta$. But then $\delta \alpha\beta=\lambda_1\alpha\beta$, so for $\delta$ to annihilate $\alpha\beta\in P_1$ we need $\lambda_1=0$. But then $\delta e_1=\lambda_2\alpha\beta$ so no nonzero such $\delta$ annihilates $P_1$.
\end{proof}

\begin{lemma}\label{structure of fannig}
    $\fannig\cong P_1\oplus \rad(P_1)$.
\end{lemma}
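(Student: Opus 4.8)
The plan is to compute $\fannig=\funig/\finanni\funig$ by using the block decomposition of $\funig$ to reduce to identifying a single submodule $\finanni P_2\subseteq P_2$, and then reading the quotient off \cref{structures of projectives}. First I would decompose $\funig=\ind_{\finuni}^{\fingp}\triv\oplus\ind_{\finuni}^{\fingp}\chi$ into block components: by the corollary above on the summands of $\ind_{\finuni}^{\fingp}\triv$, its unipotent part is one copy of $P_1$ and no $P_2$, and by the lemma above on $\ind_{\finuni}^{\fingp}\chi$, its unipotent part is one copy of $P_2$ and no $P_1$. Since $P_1$ and $P_2$ are the only indecomposable projectives of $\uniblockfin$ by \cref{structures of projectives}, this gives $\funig\cong P_1\oplus P_2\oplus N$ with $N$ a direct sum of projective indecomposables lying outside the unipotent block. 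As $\finanni$ annihilates $P_1$ and contains every non-unipotent block (so $\finanni N=N$), the submodule $\finanni\funig$ corresponds to $0\oplus\finanni P_2\oplus N$ under this isomorphism, whence $\fannig\cong P_1\oplus(P_2/\finanni P_2)$; it remains to show $P_2/\finanni P_2\cong\rad(P_1)$.

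Next I would prove $\finanni P_2=S'$. By the preceding lemma $\finanni$ is the two-sided ideal generated by $\gamma$ together with the non-unipotent blocks, and the latter act as zero on the unipotent module $P_2$, so $\finanni P_2=\hecfin\gamma\hecfin P_2=\hecfin\gamma P_2$, the submodule of $P_2$ generated by $\gamma P_2$. Evaluating at $e_2$ shows this submodule contains $\hecfin\gamma$, which is the image of the endomorphism $\gamma\colon P_2\to P_2$, equal to $S'$ by \cref{quiver algebra description}; hence $\finanni P_2\supseteq S'$. For the reverse inclusion, $\finanni$ kills $P_1$ and therefore every submodule of $P_1$; granting the final step below, $P_2/S'\cong\rad(P_1)$ is such a submodule, so $\finanni$ kills $P_2/S'$, i.e.\ $\finanni P_2\subseteq S'$. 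Thus $\finanni P_2=S'$. This step is the main obstacle: one must carefully distinguish the left-module action of $\gamma\in\hecfin$ on $P_2$ from its action as an endomorphism of $P_2=\hecfin e_2$ (which is right multiplication, under $\Hom_{\hecfin}(\hecfin e_2,\hecfin e_2)\cong e_2\hecfin e_2$) and check that the $\hecfin$-submodule generated by the former equals the image of the latter.

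Finally I would identify $P_2/S'$ with $\rad(P_1)$. The subrepresentation lattices of \cref{structures of projectives} show $S'\subseteq\rad(P_2)$ with $\rad(P_2)/S'\cong L_1$ and $P_2/\rad(P_2)\cong L_2$, so $P_2/S'$ is indecomposable of length two with socle $L_1$ and head $L_2$; likewise $\rad(P_1)$ has socle $\rad^2(P_1)\cong L_1$ and head $\rad(P_1)/\rad^2(P_1)\cong L_2$. Both are therefore the non-split extension of $L_2$ by $L_1$, which is unique since $\dim_{\coeffs}\operatorname{Ext}^1_{\hecfinuni}(L_2,L_1)=1$ (visible from the quiver of \cref{quiver algebra description}); more concretely, the map $\beta\colon P_2\to P_1$ of \cref{quiver algebra description} has image $\rad(P_1)$ and, by the relation $\gamma\beta=0$, vanishes on $\im\gamma=S'$, so it factors through a surjection $P_2/S'\twoheadrightarrow\rad(P_1)$ between $q$-dimensional modules (using $\dim L_1=1$ and $\dim L_2=q-1$ from \cref{projective decomposition}), hence an isomorphism. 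Putting the three steps together, $\fannig\cong P_1\oplus P_2/\finanni P_2=P_1\oplus P_2/S'\cong P_1\oplus\rad(P_1)$.
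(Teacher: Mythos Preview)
Your proof is correct and follows essentially the same route as the paper: decompose $\funig$ as $P_1\oplus P_2\oplus N$ with $N$ non-unipotent, observe $\finanni N=N$ and $\finanni P_1=0$, compute $\finanni P_2=S'$, and identify $P_2/S'\cong\rad(P_1)$. The paper's version is terser---it simply asserts $\finanni P_2=S'$ and $P_2/S'\cong\rad(P_1)$---whereas you supply the missing justifications, in particular the explicit isomorphism via $\beta$ and the care taken to reconcile left multiplication by $\gamma\in\hecfin$ with its action as an endomorphism (right multiplication on $\hecfin e_2$); these are genuine points the paper glosses over, so your version is an improvement in rigor rather than a different argument.
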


\begin{proof}
    We know $\funig$ contains one copy each of $P_1$ and $P_2$. Hence all its other projective summands $N$ live outside the unipotent block, and so satisfy $\finanni N=N$ and hence $N/\finanni N=0$. But by definition $\finanni P_1=0$, that is, $P_1/\finanni P_1=P_1$. Finally, using the notation of \cref{structures of projectives}, $\finanni P_2 = cP_2 = S'$, so $\rad(P_1)\cong P_2/S'=P_2/\finanni P_2$, and hence $\fannig=\funig/\finanni\funig \cong P_1\oplus \rad(P_1)$.
\end{proof}

\begin{corollary}\label{finite generators are derived equivalent}
    There is a short exact sequence 
    \[\begin{tikzcd}
        0 & {\rad(P_1)} & {P_1} & \triv & 0
        \arrow[from=1-1, to=1-2]
        \arrow[from=1-2, to=1-3]
        \arrow[from=1-3, to=1-4]
        \arrow[from=1-4, to=1-5]
    \end{tikzcd}\]
    two entries of which are the direct summands $P_1$ and $\rad(P_1)$ of $\fourg$, and two of which are the direct summands $P_1$ and $\triv$ of $\fannig$.
\end{corollary}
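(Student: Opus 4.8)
The plan is to read the short exact sequence straight off the subrepresentation lattice of $P_1$ recorded in \cref{structures of projectives}, and then to match its three terms against the direct summands of $\fourg$ and $\fannig$ that have already been computed.

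First I would quotient $P_1$ by its radical. By \cref{structures of projectives}, $\rad(P_1)$ is the unique maximal submodule of $P_1$, with $P_1/\rad(P_1)\cong L_1$; since we have shown above that $L_1=\triv$, the canonical surjection $P_1\twoheadrightarrow L_1$ is precisely a surjection $P_1\twoheadrightarrow\triv$ with kernel $\rad(P_1)$, which yields the asserted short exact sequence $0\to\rad(P_1)\to P_1\to\triv\to 0$.

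It then remains to identify the terms with the relevant summands. By definition $\fourg=\triv\oplus P_1$, so its two indecomposable direct summands are $P_1$ and $\triv$; by the preceding lemma $\fannig\cong P_1\oplus\rad(P_1)$, so its two direct summands are $P_1$ and $\rad(P_1)$. Thus each of the three modules $\rad(P_1)$, $P_1$, $\triv$ appearing in the sequence is one of these summands, as claimed. The one point worth a word of care is that the copy of $P_1$ serving as the middle term should be literally the same summand of $\fourg$ and of $\fannig$, so that the sequence genuinely links the two generators; this is immediate because $\uniblockfin$ has, up to isomorphism, a unique indecomposable projective with head $\triv$, namely $P_1$.

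There is no genuine obstacle here: the entire content is contained in \cref{structures of projectives} together with the already-established facts that $L_1=\triv$ and that $\fourg$ and $\fannig$ decompose as above, so the proof reduces to this short bookkeeping argument once those ingredients are in hand.
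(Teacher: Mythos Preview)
Your argument is correct and is exactly the implicit reasoning behind the corollary: the paper gives no separate proof, as the short exact sequence is read directly off the lattice in \cref{structures of projectives} together with $L_1=\triv$, and the identification of summands is immediate from the definition $\fourg=\triv\oplus P_1$ and the preceding lemma $\fannig\cong P_1\oplus\rad(P_1)$. Note that the corollary as stated in the paper has the labels swapped (it attributes $\rad(P_1)$ to $\fourg$ and $\triv$ to $\fannig$); your write-up has them the right way round.
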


Consider now the quadratic extension $k'$ of $\resfield$ gained by adjoining a square root of a non-square element $\epsilon$, and let $x+y\sqrt{\epsilon}$ for $x,y\in\resfield$ generate the $l$-torsion in $k'^{\times}$. Note that, as there is no $l$-torsion in $\resfield^{\times}$ (by our assumption that $l$ does not divide $q-1$), but there is nontrivial $l$-torsion in $k'^{\times}$ (as $l$ does divide $q+1$ and hence $q^2-1$), we have that $x$ and $y$ are both nonzero: if not, then $(x+\sqrt{\epsilon}y)^2$ lies in $\resfield$, and hence $x+\sqrt{\epsilon}y$ has order dividing $2(q-1)$, which is not divisible by $l$, which is a contradiction. Write $l^r$ for the order of $x+\sqrt{\epsilon}y$ in $k'^{\times}$, noting that $r>0$ by the previous discussion.

We shall also make use of the following two conjugacy classes in $\fingp$: the conjugacy class in $\fingp$ of $\stwotwo{1}{1}{0}{1}$ will be denoted $C_1$, and the conjugacy class of $\stwotwo{x}{y}{\epsilon y}{x}$ will be denoted $C_2$. Also write $1=\stwotwo{1}{0}{0}{1}$ for the identity matrix in $\fingp$.

We make the initial observation that
\[C_1=\left\{\twotwo{1}{0}{a}{1}\middle|a\in\resinv\right\}\cup \left\{\twotwo{1-ab}{a}{-ab^2}{1+ab}\middle|a\in\resinv,b\in\resfield\right\}\]
and 
\[C_2=\left\{\twotwo{b}{\frac{\epsilon y^2-(x-b)^2}{a}}{a}{2x-b}\middle|a\in\resinv,b\in\resfield\right\}.\]

To see this, note that, for two-by-two matrices, any noncentral matrices sharing a characteristic polynomial must lie in the same conjugacy class. Hence, as the matrices listed have characteristic polynomials $(X-1)^2$ and $(X-x)^2-\epsilon y^2$ respectively, and are not central, they do indeed lie in $C_1$ and $C_2$ respectively. But it is known (see for example \cite{digne1991representations}, Chapter 15 Table 1) that the sizes of $C_1$ and $C_2$ are $q^2-1$ and $q(q-1)$ respectively, so we have in fact obtained the entire class.

We finally write $Z_{\lambda}=\sum_{g\in C_1}g+\sum_{g\in C_2}g+\lambda 1$.

\begin{lemma}\label{What is Z}
    There exists some (necessarily unique) $\lambda\in R$ such that we can take $\gamma$ to be $e_2Z_{\lambda}$.
\end{lemma}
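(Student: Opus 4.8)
The plan is to turn the statement into a single non‑vanishing assertion about one coefficient and then to verify that assertion by an explicit computation with the two class sums. First note that $Z_\lambda$ is central in $\hecfin$, being a sum of conjugacy‑class sums together with a scalar multiple of the identity, so $e_2 Z_\lambda\in\End(P_2)=e_2\hecfin e_2$. By \cref{quiver algebra description} the relations $\alpha\gamma=\gamma\beta=0$ give $\gamma^{m+1}=\gamma\cdot\beta\alpha=(\gamma\beta)\alpha=0$, and $\End(P_2)=e_2\bigl(\End(P_1\oplus P_2)\bigr)e_2$ is spanned by $e_2,\gamma,\dots,\gamma^m$, so $\End(P_2)\cong R[\gamma]/(\gamma^{m+1})$, a commutative local ring with $\rad\End(P_2)=(\gamma)$. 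Writing $Z_0:=Z_\lambda-\lambda\cdot 1$, I would expand $e_2 Z_0=c_0e_2+c_1\gamma+\dots+c_m\gamma^m$ with uniquely determined $c_i\in R$; then $e_2 Z_\lambda=(c_0+\lambda)e_2+c_1\gamma+\dots+c_m\gamma^m$, so there is exactly one $\lambda$, namely $\lambda=-c_0$, for which $e_2 Z_\lambda\in\rad\End(P_2)$. This settles uniqueness and reduces the problem to showing that for this $\lambda$ the element $e_2 Z_\lambda$ is an admissible choice of $\gamma$.

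For that $\lambda$ we have $e_2 Z_\lambda=\gamma\cdot(c_1e_2+c_2\gamma+\dots+c_m\gamma^{m-1})$. If $c_1\neq 0$ the second factor is a unit of the local ring $\End(P_2)$, so $e_2 Z_\lambda$ and $\gamma$ differ by an automorphism of $P_2$; hence $e_2 Z_\lambda$ generates $\rad\End(P_2)$, has image exactly $S'=\im\gamma$, and (since $\gamma\beta=\alpha\gamma=0$ forces $\rad\End(P_2)\cdot\beta=0=\alpha\cdot\rad\End(P_2)$) satisfies $(e_2 Z_\lambda)\beta=\alpha(e_2 Z_\lambda)=0$; after rescaling $\alpha$ by a unit so that $\beta\alpha=(e_2 Z_\lambda)^m$, the triple $(\alpha,\beta,e_2 Z_\lambda)$ satisfies the presentation of \cref{quiver algebra description} and may be used in place of $(\alpha,\beta,\gamma)$. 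So everything comes down to proving $c_1\neq 0$.

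The constant term $c_0$ is the scalar by which the central element $Z_0$ acts on the head $L_2$ of $P_2$, hence the reduction mod $l$ of $|C_1|\tfrac{\theta(u)}{\dim\theta}+|C_2|\tfrac{\theta(s)}{\dim\theta}$ for an ordinary cuspidal character $\theta$ reducing to $L_2$ and representatives $u\in C_1$, $s\in C_2$; since $|C_1|=q^2-1\equiv 0\pmod l$ the first term dies, while $|C_2|=q(q-1)$, $q\equiv-1\pmod l$, and the fact that the values of $\theta$ on the $l$‑power‑order element $s$ are sums of $l$‑power roots of unity (which reduce to $1$ in $R$) give $c_0=2$ (nonzero as $l$ is odd), so $\lambda=-2$. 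For $c_1$: when $l^r\geq 5$ one has $\dim\operatorname{Ext}^1(L_2,L_2)=1$, so there is a unique non‑split self‑extension $M$ of $L_2$ (uniserial with both factors $L_2$), which is a quotient of its projective cover $P_2$; I would check that $\ker(P_2\twoheadrightarrow M)=\rad^2(P_2)+N$, with $N/\rad^2(P_2)$ the $L_1$‑summand of $\rad(P_2)/\rad^2(P_2)$, is stable under $\End(P_2)$, that $\gamma$ induces the nonzero square‑zero nilpotent on $M$ while $\gamma^2$ induces $0$, so that $e_2 Z_0$ acts on $M$ as $c_0\cdot\mathrm{id}+c_1(\text{nilpotent})$; thus $c_1\neq 0$ is equivalent to $Z_0$ not acting as a scalar on $M$. (When $l^r=3$, where $\operatorname{Ext}^1(L_2,L_2)=0$ and $P_2$ is uniserial of length three, the same role is played by $P_2$ itself: $c_1\neq 0$ becomes the statement that $Z_{-2}$ maps the head of $P_2$ nontrivially to its socle; the argument is unchanged.)

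The remaining step — showing $Z_0$ does not act as a scalar on $M$ — is the only substantial work and the place I expect the real difficulty. I would evaluate $\sum_{g\in C_1}g+\sum_{g\in C_2}g$ directly on an explicit model of $M$, using the explicit descriptions of $C_1$ and $C_2$ recorded above together with an explicit model of the cuspidal $L_2=D_{\resfield}(1,1^2)$ and of its self‑extension; alternatively one can import the explicit formulae of \cite{paige2014projective} for the endomorphisms of the projective indecomposables to pin down $\gamma$ inside $\hecfin$ and compare it with $e_2 Z_{-2}$. The hard point is non‑cancellation: both class sums contribute to the off‑diagonal (nilpotent) coefficient on $M$, and one must verify that these contributions do not add to zero. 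It is precisely the choice of the regular unipotent class $C_1$, whose order $q^2-1$ is divisible by $l$ so that $\sum_{g\in C_1}g$ acts nilpotently on the whole block, together with the elliptic class $C_2$ of $l$‑power order, whose order $q(q-1)$ is prime to $l$, that makes these two operators interact with the radical filtration of $P_2$ in genuinely different ways, so that they cannot cancel.
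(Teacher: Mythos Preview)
Your reduction is sound: $\End(P_2)\cong R[\gamma]/(\gamma^{m+1})$ is local, there is a unique $\lambda$ putting $e_2 Z_\lambda$ into the radical, and the lemma then comes down to the single coefficient $c_1$ being nonzero. That much matches the paper. But you stop exactly where the real work begins: you do not establish $c_1\neq 0$, only outline two approaches (a direct computation on the self-extension $M$, or comparison with formulae from \cite{paige2014projective}) and flag non-cancellation as ``the hard point''. As written, the argument is incomplete; your first route in particular would require building an explicit model of the cuspidal $L_2$ and its self-extension and carrying out a genuinely delicate computation, and you have not done this.

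The paper handles this step more directly than either of your proposals, and without touching $M$. From \cite{paige2014projective} one knows that $\End(P_2)$ is generated \emph{as an algebra} by $\phi(Y)$ for a central $Y\in Z(\hecfin)$ characterised by its eigenvalues $(2,\zeta^i+\zeta^{-i})$ on the ordinary constituents $\pi_0,\pi_i$ of a $\bar{\mathbb Z}_l$-lift of $P_2$. A short character-table computation (the same kind you used for $c_0$, now also applied to $C_1$) identifies this $Y$ with the class-sum combination defining $Z_0$. Hence $e_2 Z_0$ already generates $\End(P_2)$ as an algebra; subtracting its constant term gives an element of the radical that still generates (triangular change of basis from powers of $t$ to powers of $t-c_0$), and any generator of the radical is a valid $\gamma$. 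So $c_1\neq 0$ falls out of the algebra-generation statement rather than being checked separately. Your instinct to invoke Paige is the right one, but the result to cite is the algebra-generation theorem, not an explicit formula for $\gamma$ to be matched by hand.
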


\begin{proof}
   By \cite{paige2014projective}, Proposition 2.6, $P_2$ can be inflated to a representation over $\bar{\mathbb{Z}_l}$, which, when extended to $\bar{\mathbb{Q}_l}$, decomposes as a direct sum of irreducible representations: the Steinberg representation $\pi_0$ and each of the supercuspidal lifts $\pi_i$ of $L_2$, for $i$ in the range $0<i\leq\frac{l^r-1}{2}$.

   From \cite{paige2014projective}, the final remarks of Section 2 on page 363 and the opening remarks of Section 4 on page 368, there is an injective homomorphism $\psi:\End(P_2)\hookrightarrow\mathbb{C}^{\oplus\frac{l^r+1}{2}}$, given by sending the endomorphism $\upsilon$ to a tuple $(u,u_i)$  where $u$ and $u_i$ are the induced actions of $\upsilon$ on $\pi_0$ and $\pi_i$ respectively (necessarily scalar as $\pi_0$ and $\pi_i$ are irreducible).
   
   Furthermore, by those same remarks, the natural map $\phi:Z(\hecfin)\rightarrow\End(P_2)$, sending an element of the centre of the group algebra to its action by multiplication, is surjective. 

   Then, by Theorem 4.11 and Remark 1 in section 4 of \cite{paige2014projective}, we see that $\End(P_2)$ is generated as an algebra by $\phi(Y)$ for some $Y\in Z(\hecfin)$, and that $\psi\phi(Y)=(2,\zeta^i+\zeta^{iq})=(2,\zeta^i+\zeta^{-i})$, for $\zeta$ a primitive $l^r$th root of unity.

   To find an explicit description for a $Y$ with this tuple, we make repeated use of Lemma 4.1 from \cite{paige2014projective}, which says that, if $C$ is a conjugacy class in $\fingp$, then 
   
   \[\phi(\sum_{g\in C}g)=|C|(\frac{\Tr(\pi_0(C))}{\dim\pi_0},\frac{\Tr(\pi_i(C))}{\dim\pi_i}).\]
   
   We combine this with the character table information from \cite{digne1991representations} Chapter 15 Table 1, which says that

    {\centering
    \begin{tabular}{|c||c|c|c|}
        \hline
        Cong. class & $1$ & $C_1$ & $C_2$ \\
        \hline
        Size & $1$ & $(q+1)(q-1)$ & $q(q-1)$ \\
        $\Tr(\pi_0)$ & $q$ & $0$ & $-1$ \\
        $\Tr(\pi_i)$ & $q-1$ & $-1$ & $-\zeta^i-\zeta^{iq}$ \\
        \hline
   \end{tabular}
   
   }

   Using this, and that $\Tr(\pi(1))=\dim\pi$ for any complex representation $\pi$, we get that $\phi(\sum_{g\in C_1}g)=(0,-(q+1))$, that $\phi(\sum_{g\in C_2}g)=(-(q-1),-q(\zeta^i+\zeta^{iq}))$, and that $\phi(1)=(1,1)$.

   Thus we find that a valid choice is $Y=\frac{1}{q}\sum_{g\in C_1}g-\frac{1}{q}\sum_{g\in C_2}g+\frac{q+1}{q}1$, as then $\phi(Y)=(2,\zeta^i+\zeta^{iq})$, and as $Y$ is a sum of sums over conjugacy classes it is central. However, as in our field $q+1=0$ and $q$ is invertible, we may simplify this to $Y=-\sum_{g\in C_1}g+\sum_{g\in C_2}g$.

   Now, as $Y$ is central, we have that $Y$ and $e_2Ye_2=e_2Y$ have the same right action on $P_2=\hecfin e_2$, so the right action of $e_2Y$ also generates $\End(P_2)$.
   
   Now, we also know that the space $\End(P_2)=e_2\hecfin e_2$ is spanned by $e_2$ and the positive powers of $\gamma$. Thus there exists some unique $\lambda$ such that $e_2Z_{\lambda}=e_2(Y+\lambda 1)$ is a linear combination of strictly positive powers of $\gamma$. But by definition $\gamma$ is any arbitrary representative of $\jacrad(\hecfin)/\jacrad(\hecfin)^2$, and so any linear combination of strictly positive powers of a choice of $\gamma$ is also a valid choice for $\gamma$. Hence $e_2Z_{\lambda}$ is a valid choice of $\gamma$.
\end{proof}

Write $e$ for the central idempotent of the unipotent block. Note that $\frac{1}{|\finbor|}\sum_{g\in\finbor}g$ is idempotent and has $\hecfin\frac{1}{|\finbor|}\sum_{g\in\finbor}g\cong P_1$, and so we may henceforth take this to be our choice for $e_1$. We now deduce the value of $\lambda$ in \cref{What is Z}.

\begin{lemma}\label{generator of annihilator is almost zero}
    $Z_{q-1}=\sum_{i}\mu_ic_ix_iz_i+\sum_{j}\nu_jd_j\stwotwo{0}{1}{1}{0}y_j$ for some $c_i,d_j\in\finuni$, some $x_i,y_j\in\fintor$, some $z_i\in\finuniopp$, and some $\mu_i,\nu_j\in\coeffs$, such that $\sum_i \mu_ic_i=0$ and $\sum_j\nu_jd_j=0$.
\end{lemma}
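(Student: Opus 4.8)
The plan is to prove the identity by a direct computation in $\fingp$: substitute the explicit parametrisations of $C_1$ and $C_2$ recorded just above into $Z_{q-1}=\sum_{g\in C_1}g+\sum_{g\in C_2}g+(q-1)\stwotwo{1}{0}{0}{1}$, and rewrite each group element occurring via the decomposition $\fingp=\finuni\fintor\finuniopp\,\sqcup\,\finuni\stwotwo{0}{1}{1}{0}\fintor$. First I would record this decomposition and its uniqueness: the products $\stwotwo{1}{c}{0}{1}\stwotwo{x_1}{0}{0}{x_2}\stwotwo{1}{0}{z}{1}=\stwotwo{x_1+cx_2z}{cx_2}{x_2z}{x_2}$ and $\stwotwo{1}{d}{0}{1}\stwotwo{0}{1}{1}{0}\stwotwo{y_1}{0}{0}{y_2}=\stwotwo{dy_1}{y_2}{y_1}{0}$ show that an element with nonzero $(2,2)$-entry is uniquely of the first shape, with $c$ its $(1,2)$-entry divided by its $(2,2)$-entry, while an element with zero $(2,2)$-entry is uniquely of the second shape, with $d$ its $(1,1)$-entry divided by its $(2,1)$-entry; a cardinality count confirms these two cells exhaust $\fingp$. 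This already presents $Z_{q-1}$ in the required shape, with the $\mu_i,\nu_j$ the coefficients of $Z_{q-1}$; it remains to verify $\sum_i\mu_ic_i=0$ and $\sum_j\nu_jd_j=0$ in the group algebra of $\finuni$ (which is $\cong\coeffs[(\resfield,+)]$).

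Next I would sort the support of $Z_{q-1}$ by the $(2,2)$-entry. The elements $\stwotwo{1}{0}{a}{1}$ of $C_1$, the elements $\stwotwo{1-ab}{a}{-ab^2}{1+ab}$ of $C_1$ with $ab\neq-1$, the $C_2$-elements with $b\neq 2x$, and the $q-1$ copies of the identity have nonzero $(2,2)$-entry; the $q-1$ elements $\stwotwo{2}{a}{-a^{-1}}{0}$ of $C_1$ and the $q-1$ elements $\stwotwo{2x}{(\epsilon y^2-x^2)/a}{a}{0}$ of $C_2$ have zero $(2,2)$-entry. I would then compute the two sums of $\finuni$-coordinates cell by cell. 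For the $\stwotwo{0}{1}{1}{0}$-cell, after reindexing the sums over $a\in\resinv$ by the bijections $a\mapsto-2a$ and $a\mapsto 2x/a$ (both well-defined since $q$ is odd and $x\neq 0$), the $C_1$- and $C_2$-contributions become sums over $\finuni$ which one checks cancel using $l\mid q+1$ and $l\nmid q-1$. For the $\finuni\fintor\finuniopp$-cell one does the analogous computation with the $(1,2)/(2,2)$-coordinates; the observation that makes this tractable is that every element of $C_2$ has determinant $x^2-\epsilon y^2=N_{k'/\resfield}(x+\sqrt{\epsilon}\,y)=1$, because the order $l^r$ of $x+\sqrt{\epsilon}\,y$ divides $q+1$. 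With the substitution $u=2x-b$ the $C_2$-coordinate is then $(2x-u-u^{-1})/a$, which is never zero ($u+u^{-1}=2x$ would force $u=x\pm\sqrt{\epsilon}\,y\notin\resfield$), and summing over the free parameter $a\in\resinv$ makes this contribution uniform; combining with the $C_1$-contribution and the contribution from the identity copies, and using $q\equiv-1\pmod l$ and $l\nmid q-1$, one checks that $\sum_i\mu_ic_i=0$.

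The step I expect to be the main obstacle is this final one: it is an intricate identity in the group algebra of $\finuni$, and the delicate points are the multiplicities in the $C_2$-contribution (the relevant coordinate is two-to-one in the parameter $b$, and only becomes uniform after one also sums over the parameter $a$) and, above all, the fact that $q-1$ is exactly the constant for which both $\finuni$-sums vanish — this is the arithmetic core of the lemma and the reason that particular constant, rather than some other residue, appears in the statement (and, via the preceding remark, is what pins down the value of $\lambda$). The hypotheses I expect to use here are $l$ odd, $l\mid q+1$ and $l\nmid q-1$, together with the standing $l\neq p$.
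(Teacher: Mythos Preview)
Your approach is essentially the paper's own: both factor every element of $C_1\cup C_2$ through the decomposition $\fingp=\finuni\fintor\finuniopp\sqcup\finuni\stwotwo{0}{1}{1}{0}\fintor$, read off the $\finuni$-coordinate as the appropriate ratio of matrix entries, and then verify that the two $\finuni$-sums vanish. Your observation that every element of $C_2$ has determinant $x^2-\epsilon y^2=N_{k'/\resfield}(x+\sqrt{\epsilon}\,y)=1$ (because the order $l^r$ divides $q+1$) is a pleasant shortcut the paper does not spell out; the paper instead just uses that $\epsilon y^2-(x-b)^2\ne0$ (as $\epsilon$ is a non-square) and lets the free parameter $a\in\resinv$ make the big-cell $C_2$-coordinate sweep $\resinv$.

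There is, however, one genuine gap. You take $Z_{q-1}=\sum_{g\in C_1}g+\sum_{g\in C_2}g+(q-1)1$, following the displayed definition of $Z_\lambda$; but that display carries a sign slip. The element actually in play (trace back the derivation of $Y$ in the preceding lemma, and compare the opening line of the paper's own proof here) is
\[
Z_{q-1}=-\sum_{g\in C_1}g+\sum_{g\in C_2}g+(q-1)1,
\]
and that sign is precisely what makes your ``cancellation'' claims true. With your plus sign the two $\stwotwo{0}{1}{1}{0}$-cell contributions are the \emph{same} sum over $\finuni$ and they add to twice that sum, which is nonzero since $l$ is odd; likewise in the big-cell computation the coefficient of the identity of $\finuni$ becomes $(q-1)+(q-1)=2(q-1)\ne0$ since $l\nmid q-1$. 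Once the minus sign on the $C_1$-sum is restored, your computation goes through exactly as you outline and coincides with the paper's. A related minor point: you invoke ``$q$ odd'' to justify the bijection $a\mapsto-2a$, but only $l$ is assumed odd; the residue characteristic $p$ may be $2$. With the correct sign this is harmless, since the two $\stwotwo{0}{1}{1}{0}$-cell contributions are literally equal (whether $p=2$ or not) and cancel without any reindexing.
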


\begin{proof}
    We proceed by factorising each $g$ in $C_1\cup C_2$ into one of the forms $\mu cxz$ or $\nu d\stwotwo{0}{1}{1}{0}y$ for $c,d\in\finuni$, $x,y\in \fintor$, $z\in\finuniopp$, and $\mu,\nu\in\coeffs$. We shall make repeated use of the following formula:

    \begin{align*}
        \twotwo{w}{x}{y}{z}= \begin{cases}
            \twotwo{1}{xz^{-1}}{0}{1}\twotwo{w-xyz^{-1}}{0}{0}{z}\twotwo{1}{0}{yz^{-1}}{1} \quad \text{if} \ z\neq0 \\[5pt]
            \twotwo{1}{wy^{-1}}{0}{1}\twotwo{0}{1}{1}{0}\twotwo{y}{0}{0}{x} \quad \text{if} \ z=0, \ y\neq 0
        \end{cases}
    \end{align*}
    observing that these are respectively of the two above forms.
    
    We start with $g\in C_1$. First, note that $\stwotwo{1}{0}{a}{1}\in \finuniopp$ for all $a\in\resinv$, and so summing over the $q-1$ terms in $C_1$ of this form we get $\sum_{a\in\resinv}z_{1,0,a}$ for $z_{1,0,a}=\stwotwo{1}{0}{a}{1}\in\finuniopp$.

    Next, let $a\in\resinv$, $b\in\resfield$ be such that $1+ab\neq0$. Then
    \[\twotwo{1-ab}{a}{-ab^2}{1+ab}\in\twotwo{1}{a(1+ab)^{-1}}{0}{1}\fintor\finuniopp.\]

    Now, observe that $a(1+ab)^{-1}=(a^{-1}+b)^{-1}$, and so we have two cases:
    
    If $b=0$, then in fact we just have 
    \[\twotwo{1-ab}{a}{-ab^2}{1+ab}=\twotwo{1}{a}{0}{1}\] 
    Hence summing the $q-1$ terms we get by varying $a$ over $\resinv$ gives $\sum_{a\in\resinv}\stwotwo{1}{a}{0}{1}$.

    If $b\neq0$, then $1+ab=0$ precisely when $a=-b^{-1}$, so fixing $b$ and varying $a$ over $\resinv\backslash\{-b^{-1}\}$ means $(a^{-1}+b)^{-1}$ takes every value $a'$ in $\resfield$ a single time (by setting $a=(a'-b)^{-1})$ except $0$ (which would need $a=-b^{-1}$) and $b^{-1}$ (which would need $a=0$). Hence summing these $q-2$ terms gives $\sum_{a'\in\resinv\backslash\{b^{-1}\}}\stwotwo{1}{a'}{0}{1}x_{1,b,a'}z_{1,b,a'}$ for some $x_{1,b,a'}\in\fintor$ and $z_{1,b,a'}\in\finuniopp$.

    Meanwhile, if $1+ab=0$, we observe that necessarily $b\neq0$, and that if we fix such a $b$ then there is exactly one $a$ satisfying this relation, namely $a=-b^{-1}$. Now, we have that
    \[\twotwo{1-ab}{a}{-ab^2}{1+ab}\in\twotwo{1}{(1-ab)(-ab^2)^{-1}}{0}{1}\twotwo{0}{1}{1}{0}\finuniopp.\]
    Simplifying gives $(1-ab)(-ab^2)^{-1}=2b^{-1}$, and varying $b$ over $\resinv$ means $2b^{-1}$ takes every value $a'\in\resinv$ a single time (by setting $b=2a'^{-1}$). Hence summing these $q-1$ terms gives $\sum_{a'\in\resinv}\stwotwo{1}{a'}{0}{1}\stwotwo{0}{1}{1}{0}y_{1,a'}$.

    Now we consider $g\in C_2$. Let $a\in\resinv$ and $b\in\resfield$, and suppose first that $2x-b\neq 0$. Then
    \[\twotwo{b}{\frac{\epsilon y^2-(x-b)^2}{a}}{a}{2x-b}\in\twotwo{1}{\frac{\epsilon y^2-(x-b)^2}{a(2x-b)}}{0}{1}\fintor\finuniopp.\]
    Now, note that $\epsilon y^2-(x-b)^2\neq0$ as $\epsilon$ is by assumption not a square. Hence fixing $b$ and varying $a$ over $\resinv$ means $\frac{\epsilon y^2-(x-b)^2}{a(2x-b)}$ takes every value $a'\in\resinv$ a single time (by setting $a=\frac{\epsilon y^2-(x-b)^2}{a'(2x-b)}$). Hence summing these $q-1$ terms gives $\sum_{a'\in\resinv}\stwotwo{1}{a'}{0}{1}x_{2,b,a'}z_{2,b,a'}$ for some $x_{2,b,a'}\in\fintor$ and $z_{2,b,a'}\in\finuniopp$.

    The remaining case is when $2x-b=0$, noting there is exactly one such $b$, namely $2x$, and that it is not zero, since $x$ cannot be zero and $l\neq2$. Then
    \[\twotwo{b}{\frac{\epsilon y^2-(x-b)^2}{a}}{a}{2x-b}\in\twotwo{1}{ba^{-1}}{0}{1}\twotwo{0}{1}{1}{0}\fintor\]
    and so varying $a$ over $\resinv$ means $ba^{-1}$ takes every value $a'\in\resinv$ a single time (by setting $a=ba'^{-1}$). Hence summing these $q-1$ terms gives $\sum_{a'\in\resinv}\stwotwo{1}{a'}{0}{1}\stwotwo{0}{1}{1}{0}y_{2,a'}$.

    Now, we recall that we have $Z_{q-1}=-\sum_{g\in C_1}g+\sum_{g\in c_2}g+(q-1)1$. Hence, putting all our results together, we have that
    \begin{align*}
        Z_{q-1}=&-\sum_{a\in\resinv}z_{1,0,a}-\sum_{a\in\resinv}\stwotwo{1}{a}{0}{1}-\sum_{b\in\resinv}\sum_{a'\in\resinv\backslash\{b^{-1}\}}\stwotwo{1}{a'}{0}{1}x_{1,b,a'}z_{1,b,a'} \\
        &-\sum_{a'\in\resinv}\stwotwo{1}{a'}{0}{1}\stwotwo{0}{1}{1}{0}y_{1,a'}+\sum_{b\in\resfield\backslash\{2x\}}\sum_{a'\in\resinv}\stwotwo{1}{a'}{0}{1}x_{2,b,a'}z_{2,b,a'} \\
        &+\sum_{a'\in\resinv}\stwotwo{1}{a'}{0}{1}\stwotwo{0}{1}{1}{0}y_{2,a'}+(q-1)\stwotwo{1}{0}{0}{1}
    \end{align*}
    which is of the form $\sum_{i}\mu_i c_ix_iz_i+\sum_{j}\nu_jd_j\stwotwo{0}{1}{1}{0}y_j$ for some $c_i,d_j\in\finuni$, some $x_i,y_j\in\fintor$, some $z_i\in\finuniopp$, and some $\mu_i,\nu_j\in\coeffs$. It remains to check that $\sum_i \mu_i c_i=0$ and $\sum_j \nu_jd_j=0$.

    Considering first $\sum_j \nu_jd_j$, we can see that we get
    \[-\sum_{a'\in\resinv}\stwotwo{1}{a'}{0}{1}+\sum_{a'\in\resinv}\stwotwo{1}{a'}{0}{1}=0.\]

    Now we consider $\sum_i \mu_i c_i$. This is
    \begin{align*}
        &-\sum_{a\in\resinv}\stwotwo{1}{0}{0}{1}-\sum_{a\in\resinv}\stwotwo{1}{a}{0}{1}-\sum_{b\in\resinv}\sum_{a'\in\resinv\backslash\{b^{-1}\}}\stwotwo{1}{a'}{0}{1} \\
        &+\sum_{b\in\resfield\backslash\{2x\}}\sum_{a'\in\resinv}\stwotwo{1}{a'}{0}{1}+(q-1)\stwotwo{1}{0}{0}{1} \\
        = \ &(-(q-1)+(q-1))\stwotwo{1}{0}{0}{1}+\sum_{a'\in\resinv}(-1-(q-2)+(q-1))\stwotwo{1}{a'}{0}{1} \\
        = \ &0.
    \end{align*}
\end{proof}

The above calculation allows us to find the value of $\lambda$ in \cref{What is Z}.

\begin{lemma}\label{lambda is 1}
    $\gamma=e_2Z_{q-1}$.
\end{lemma}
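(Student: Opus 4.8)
The plan is to reduce the determination of $\lambda$ to the single assertion that $Z_{q-1}$ annihilates $P_1$, i.e.\ that $Z_{q-1}\in\finanni$. Granting this, recall the explicit description of $\finanni$ obtained earlier: in the basis of $\hecfinuni$ used to prove that $\finanni$ is generated by the non-unipotent blocks and $\gamma$, the only basis elements lying in the corner $e_2\hecfin e_2=\End(P_2)$ are $e_2=\gamma^0,\gamma,\dots,\gamma^m$, and of these $\finanni$ contains exactly $\gamma,\dots,\gamma^m$; hence $\finanni\cap e_2\hecfin e_2=\langle\gamma,\dots,\gamma^m\rangle$. Since $Z_{q-1}$ is central, $e_2 Z_{q-1}$ lies in $e_2\hecfin e_2$, so $Z_{q-1}\in\finanni$ gives that $e_2 Z_{q-1}$ is a linear combination of strictly positive powers of $\gamma$. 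But the proof of \cref{What is Z} characterises $\lambda$ precisely by this property of $e_2 Z_\lambda$, so uniqueness forces $\lambda=q-1$, which is the claim.

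It therefore remains to show $Z_{q-1}\in\finanni$, that is, that $Z_{q-1}$ acts as zero on $P_1=\ind_{\finbor}^{\fingp}\triv\cong\coeffs[\fingp/\finbor]$. The key point is that by the Bruhat decomposition $\fingp=\finbor\sqcup\finbor w\finbor$ the algebra $\End_{\fingp}(P_1)$ is two-dimensional, spanned by the identity $I$ and the operator $J\colon[p]\mapsto\sum_{p'}[p']$; hence, $Z_{q-1}$ being central, it acts on $P_1$ as $\alpha I+\beta J$ for some $\alpha,\beta\in\coeffs$, and it suffices to prove $\alpha=\beta=0$. I would do this with two linear relations. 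Writing $Z_{q-1}=-\sum_{g\in C_1}g+\sum_{g\in C_2}g+(q-1)\cdot 1$ (the form established in the proof of \cref{generator of annihilator is almost zero}), the total coefficient of $Z_{q-1}\cdot[p]$ equals the augmentation $-(q^2-1)+q(q-1)+(q-1)=-(q^2-1)+(q-1)(q+1)=0$, giving $\alpha+(q+1)\beta=0$; and the coefficient of $[p]$ itself in $Z_{q-1}\cdot[p]$ equals $\sum_{g\in\mathrm{Stab}(p)}a_g$, where $a_g$ is the coefficient of $g$ in $Z_{q-1}$. Since $\mathrm{Stab}(p)$ is $\fingp$-conjugate to $\finbor$, and the explicit descriptions give $\left|C_1\cap\finbor\right|=q-1$ (only the members $\stwotwo{1}{a}{0}{1}$, $a\in\resinv$) and $C_2\cap\finbor=\varnothing$ (every member has nonzero lower-left entry), this coefficient is $-(q-1)+0+(q-1)=0$, giving $\alpha+\beta=0$. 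As $q$ is invertible in $\coeffs$, the two relations force $\alpha=\beta=0$.

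I expect the essential step to be the conceptual reduction of the first paragraph: once one sees that the value of $\lambda$ is pinned down entirely by whether $Z_{q-1}\in\finanni$ (via the earlier basis analysis of $\hecfinuni$ together with the proof of \cref{What is Z}), the remaining vanishing is a short computation with the permutation module $\coeffs[\fingp/\finbor]$. The decomposition recorded in \cref{generator of annihilator is almost zero} gives an alternative verification of $Z_{q-1}\in\finanni$: left multiplication by $e_1=\tfrac{1}{\left|\finbor\right|}\sum_{g\in\finbor}g$ collapses each term $c_i x_i z_i$ to $e_1 z_i$ and each term $d_j w y_j$ to $e_1 w$, and then the relation $\sum_j\nu_j d_j=0$ (hence $\sum_j\nu_j=0$) together with the companion identity $\sum_i\mu_i z_i=0$ — obtained by the same bookkeeping that establishes $\sum_i\mu_i c_i=0$ there — yield $e_1 Z_{q-1}=0=Z_{q-1}e_1$, so once more $Z_{q-1}\in\finanni$ and $\gamma=e_2 Z_{q-1}$.
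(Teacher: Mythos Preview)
Your argument is correct and takes a genuinely different route from the paper. Both proofs reduce to showing that $Z_{q-1}$ (for you) or $(e-e_1)Z_{q-1}$ (for the paper) lies in $\finanni$, and then appeal to the uniqueness of $\lambda$ from \cref{What is Z}. For the annihilation itself, the paper runs through the Bruhat decomposition: it checks that $Z_{q-1}w\sum_{b\in\finbor}b$ is left-$\finbor$-invariant for $w=1$ (immediate from centrality) and $w=\stwotwo{0}{1}{1}{0}$, the latter handled by plugging in the decomposition of \cref{generator of annihilator is almost zero} and using $\sum_i\mu_i c_i=\sum_j\nu_j d_j=0$ to get outright vanishing. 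Your second paragraph instead identifies $\End_{\fingp}(P_1)=\langle I,J\rangle$ and determines the two scalars via the augmentation and the stabiliser count --- shorter, more conceptual, and independent of \cref{generator of annihilator is almost zero}. What the paper's approach buys is that it is a deliberate finite rehearsal of \cref{finite annihilator lives in annihilator}, where exactly the same scheme is rerun with the extended affine Weyl group in place of $\finweyl$; your permutation-module argument does not transport to that $p$-adic calculation. Your third-paragraph alternative is closer in spirit to the paper, but the ``companion identity'' $\sum_i\mu_i z_i=0$ is not literally the same bookkeeping: the cleanest justification is transpose-invariance of $Z_{q-1}$ (conjugacy classes in $\GL_2$ are closed under $g\mapsto g^{T}$), which swaps the $\finuni$- and $\finuniopp$-components in the unique $cxz$-factorisation and so converts $\sum_i\mu_i c_i=0$ into $\sum_i\mu_i z_i=0$.
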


\begin{proof}
    We shall directly calculate $(e-e_1)Z_{q-1}P_1$ and observe that it is zero. As then $(e-e_1)(Z_{q-1}+\mu 1)P_1=\mu(e-e_1)P_1$ is only zero for $\mu=0$, only $\lambda=q-1$ gives $(e-e_1)Z_{\lambda}\in\finanni$, whence the claim.

    Now, $eP_1=P_1$, so it in fact suffices to show that $(1-e_1)Z_{q-1}P_1=0$. Thus, we need to show for any $g\in\fingp$ that $(1-e_1)Z_{q-1}g\sum_{b\in\finbor}b=0$, that is, that $Z_{q-1}g\sum_{b\in\finbor}b$ is left-$\finbor$-invariant.
    
    By the Bruhat decomposition, we may without loss of generality take  $g=iw$ for $i\in\finbor$ and $w$ either $1$ or $\stwotwo{0}{1}{1}{0}$. But $i$ commutes with $Z_{q-1}$ as the latter is central, and so $Z_{q-1}g\sum_{b\in\finbor}b=iZ_{q-1}w\sum_{b\in\finbor}b$. Thus we may without loss of generality take $i=1$, and show that $Z_{q-1}w\sum_{b\in\finbor}b$ is left-$\finbor$-invariant.

    Now, we can write $\finbor=\finuni\fintor$. Observe first that $\fintor$ commutes with $w$, and also with $Z_{q-1}$ as $Z_{q-1}$ is central. Thus, if $i\in\fintor$, then $iZ_{q-1}w\sum_{b\in\finbor}b=Z_{q-1}wi\sum_{b\in\finbor}b=Z_{q-1}w\sum_{b\in\finbor}b$, so $Z_{q-1}w\sum_{b\in\finbor}b$ is left-$\fintor$-invariant. Thus, it only remains to prove that $w\sum_{b\in\finbor}b$ is left $\finuni$-invariant.

    We shall divide this into two cases depending on the value of $w$, respectively $1$ and $\stwotwo{0}{1}{1}{0}$.
 
    Consider the first case. Then $Z_{q-1}w\sum_{b\in\finbor}b=Z_{q-1}\sum_{b\in\finbor}b$. As $\finuni$ commutes with $Z_{q-1}$ as $Z_{q-1}$ is central, if $i\in\finuni$ then $iZ_{q-1}\sum_{b\in\finbor}b=Z_{q-1}i\sum_{b\in\finbor}b=Z_{q-1}\sum_{b\in\finbor}b$, which is exactly left-$\finuni$-invariance.
    
    For the second case, we use that, by \cref{generator of annihilator is almost zero}, we have $Z_{q-1}=\sum_{i}\mu_ic_ix_iz_i+\sum_{j}\nu_jd_j\stwotwo{0}{1}{1}{0}y_j$ for some $c_i,d_j\in\finuni$, some $x_i,y_j\in\fintor$, some $z_i\in\finuniopp$, and some $\mu_i,\nu_i\in\coeffs$, such that $\sum_i\mu_i c_i=0$ and $\sum_j\nu_j d_j=0$. Now, we have that $\finuniopp w=w\finuni\subseteq w\finbor$, so $z_iw\in w\finbor$. Additionally, $\fintor w=w\fintor\subseteq w\finbor$, so $x_iw, y_jw\in w\finbor$. Thus, 
    \begin{align*}
        Z_{q-1}w\sum_{b\in\finbor}b &=\sum_{i}\mu_ic_ix_iz_iw\sum_{b\in\finbor}b+\sum_{j}\nu_jd_j\stwotwo{0}{1}{1}{0}y_jw\sum_{b\in\finbor}b \\
        &= \sum_{i}\mu_ic_iw\sum_{b\in\finbor}b+\sum_{j}\nu_jd_j\stwotwo{0}{1}{1}{0}w\sum_{b\in\finbor}b \\
        &=0
    \end{align*}
    where the last equality follows as $\sum_i \mu_ic_i=0$ and $\sum_j \nu_jd_j=0$. As in fact \[Z_{q-1}w\sum_{b\in\finbor}b=0\] in this case, it is certainly in particular left-$\finuni$-invariant.
\end{proof}

Henceforth we shall simply write $Z$ for $Z_{q-1}$.

\begin{lemma}\label{c generates annihilator}
    $(e-e_1)Z$ generates the unipotent part of $\finanni$.
\end{lemma}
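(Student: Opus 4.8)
The plan is to reduce immediately to the preceding lemma. That lemma says $\finanni$ is generated as a two-sided ideal by the non-unipotent blocks of $\hecfin$ (that is, by $(1-e)\hecfin$) together with $\gamma$; since $\gamma=e\gamma e$ sits inside the unipotent block $\hecfinuni=e\hecfin$, this forces the unipotent part $e\finanni$ of $\finanni$ to be exactly the two-sided ideal $\hecfin\gamma\hecfin$. So everything comes down to showing that $(e-e_1)Z$ and $\gamma$ generate the same two-sided ideal of $\hecfin$.

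I would use two inputs. First, $\gamma=e_2 Z$, by \cref{lambda is 1}. Second, $Z=Z_{q-1}$ is central in $\hecfin$, being a sum of conjugacy-class sums and a scalar multiple of the identity. Centrality lets me factor $Z$ out of principal-type ideals: for any $x\in\hecfin$, the two-sided ideal generated by $xZ$ equals $(\hecfin x\hecfin)Z$, because $a(xZ)b=(axb)Z$. Hence it suffices to prove the ideal equality $\hecfin e_2\hecfin=\hecfin(e-e_1)\hecfin$; multiplying it on the right by $Z$ then yields $\hecfin\gamma\hecfin=\hecfin e_2 Z\hecfin=\hecfin(e-e_1)Z\hecfin$, which is what we want.

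To establish $\hecfin e_2\hecfin=\hecfin(e-e_1)\hecfin$, write $e=e_1+e_2+\dots+e_q$, so that $e-e_1=\sum_{i=2}^q e_i$. The inclusion $\hecfin e_2\hecfin\subseteq\hecfin(e-e_1)\hecfin$ follows from $e_2=e_2(e-e_1)$ (using $e_2e=e_2$ and $e_2e_1=0$). For the reverse inclusion it is enough to show $\hecfin e_i\hecfin\subseteq\hecfin e_2\hecfin$ for each $i\geq2$; this is trivial for $i=2$, and for $i\geq3$ it holds because $e_i=f_ig_i=f_ie_2g_i$ with $f_i\in e_i\hecfin e_2$ and $g_i\in e_2\hecfin e_i$ the chosen elements realising $\hecfin e_i\cong\hecfin e_2$ (equivalently, $\hecfin e_i\hecfin$ is the trace ideal of the projective $\hecfin e_i$, which depends only on its isomorphism class). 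Putting the pieces together gives $\hecfin(e-e_1)Z\hecfin=\bigl(\hecfin(e-e_1)\hecfin\bigr)Z=\bigl(\hecfin e_2\hecfin\bigr)Z=\hecfin\gamma\hecfin=e\finanni$, so $(e-e_1)Z$ generates the unipotent part of $\finanni$; in particular it lies in $\finanni$.

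I do not expect a genuine obstacle here: given \cref{lambda is 1} and the centrality of $Z$ the argument is essentially formal, and the only place calling for a little care is the idempotent bookkeeping that identifies the ideal generated by $e-e_1$ with the ideal generated by a single copy $e_2$ of the $P_2$-idempotent.
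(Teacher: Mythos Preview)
Your proposal is correct and follows essentially the same route as the paper. The paper argues more directly---observing $\gamma=e_2(e-e_1)Z$ to get one inclusion, and writing $(e-e_1)Z=\sum_{i\geq2}e_iZ=\sum_{i\geq2}f_i\gamma g_i\in\finanni$ (using $Z$ central and $e_i=f_ig_i$) for the other---but the ingredients and idempotent manipulations are identical to yours.
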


\begin{proof}
    We have $\gamma =e_2Z=e_2(e-e_1)Z$, so it suffices to show that $(e-e_1)Z\in\finanni$. Now, $(e-e_1)Z=\sum_{i=2}^q e_iZ$. But as $Z$ is central we have $e_iZ=f_ig_iZ=f_iZg_i=f_ie_2Zg_i=f_i\gamma g_i\in\finanni$, so $(e-e_1)Z\in\finanni$.
\end{proof}

We thus have an explicit description of $\finanni$, namely that it is generated by the non-unipotent elements plus a single unipotent element $\gamma=(e-e_1)Z$, where $Z=-\sum_{g\in C_1}g+\sum_{g\in C_2}g+(q-1)1$ for certain conjugacy classes $C_1$ and $C_2$ in $\fingp$, whose elements we have given explicitly. Furthermore, we have an exact sequence \cref{finite generators are derived equivalent} relating the summands of $\fannig$ and $\fourg$. These two facts shall be used to obtain our derived equivalence.

\section{Lifting to the $p$-adic Setting}\label{p-adic lift}

We now use our knowledge of the finite structure to understand how our various $p$-adic generators relate. In order to lift results between the two setting, we first make the following observation relating the finite and $p$-adic unipotent blocks:

\begin{proposition}\label{unipotent induces to unipotent}
    Let $\pi_f\in\finrep$. If $\pi_f\in\uniblockfin$, then $\phorind_{\fingp,\maxcomp}^{\padicgp}\pi_f\in\uniblock$. Conversely, if $\pi_f$ has no summand in $\uniblockfin$, then $\phorind_{\fingp,\maxcomp}^{\padicgp}\pi_f$ has no summand in $\uniblock$.
\end{proposition}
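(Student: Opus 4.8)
The plan is to use the compatibility of parahoric induction with the block decomposition, via the characterization of the unipotent block by the action of the Bernstein centre or, more concretely, by tracking where the trivial representation and its relatives sit. The cleanest route I would take is the following. Recall that $\phorind_{\fingp,\maxcomp}^{\padicgp} = \ind_{\maxcomp}^{\padicgp} \circ \infl_{\fingp}^{\maxcomp}$, and that for smooth representations this functor is exact and preserves finite generation. The key structural input is that the unipotent block $\uniblock$ of $\padicgp$ is, by Bushnell--Kutzko/Dat type theory (already invoked in the introduction), the block whose objects are generated by their $\iwa$-fixed vectors, equivalently the full subcategory of $\smrep$ on which a certain central idempotent (or the $[\torus,\triv]$-component of the Bernstein decomposition) acts as the identity. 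So it suffices to understand how $\phorind_{\fingp,\maxcomp}^{\padicgp}$ interacts with $\iwa$-invariants, or with the relevant idempotent.

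First I would reduce to the indecomposable case: decompose $\pi_f$ into indecomposable summands; since $\phorind$ is additive, it suffices to treat a single indecomposable $\pi_f$, which lies in a single block of $\finrep$. Second, for the forward direction, suppose $\pi_f \in \uniblockfin$. I would show $(\phorind_{\fingp,\maxcomp}^{\padicgp}\pi_f)^{\iwa} \neq 0$ and, more importantly, that $\phorind_{\fingp,\maxcomp}^{\padicgp}\pi_f$ is generated as a $\padicgp$-representation by its $\propmax$-fixed (hence $\iwa$-relevant) vectors: indeed $\infl_{\fingp}^{\maxcomp}\pi_f$ is $\propmax$-trivial, and $(\ind_{\maxcomp}^{\padicgp}\sigma)^{\propmax}$ recovers $\sigma$ as a $\maxcomp/\propmax = \fingp$-module, so the $\propmax$-invariants of $\phorind\pi_f$ are $\pi_f$ itself. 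Then taking $\finbor = \iwa/\propmax$-invariants, $(\phorind\pi_f)^{\iwa} = \pi_f^{\finuni} \neq 0$ because $\pi_f$, lying in $\uniblockfin$, has nonzero $\finbor$-invariants (every object of $\uniblockfin$ does, since $\uniblockfin$ is Morita equivalent to modules over $\End(\finproj)$-type data with $\finproj = \ind_{\finbor}^{\fingp}\triv$ a projective generator — or more elementarily, $\uniblockfin$ is precisely the $\fingp$-representations generated by their $\finuni$-invariants). A representation of $\padicgp$ generated by its nonzero $\iwa$-invariants lies in the sum of Iwahori-type blocks; combined with the supercuspidal-support bookkeeping (the inducing data is the trivial character of the torus) this pins it into $\uniblock$. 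For the converse, if $\pi_f$ has no summand in $\uniblockfin$, then $\pi_f^{\finuni}$ has no trivial-character-of-$\fintor$ constituent, so every irreducible constituent of $\pi_f$ (and hence of $\infl\pi_f$, and hence every constituent of $\phorind\pi_f$ by exactness of parahoric induction and the geometric lemma for the relevant Jacquet module) has cuspidal support away from $[\torus,\triv]$; equivalently the unipotent central idempotent annihilates $\phorind\pi_f$, so it has no unipotent summand.

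The main obstacle I expect is the converse direction: controlling the cuspidal support of $\phorind_{\fingp,\maxcomp}^{\padicgp}\pi_f$ when $\pi_f$ is cuspidal (the non-banal case is precisely where $\uniblockfin$ contains the cuspidal $L_2$, so "not in $\uniblockfin$" includes genuinely cuspidal finite representations from other blocks). I would handle this by computing the Jacquet module $(\phorind_{\fingp,\maxcomp}^{\padicgp}\pi_f)_{\torus}$ via the Mackey/geometric lemma for compact induction followed by the Jacquet functor — the double cosets $\finuni \backslash \padicgp / \maxcomp$ controlling this computation are indexed by the affine Weyl group, and one reads off that the semisimplification of the Jacquet module only involves unramified characters twisted by constituents of $(\res_{\finbor}^{\fingp}\pi_f)_{\finuni} = \pi_f^{\finuni}$ (the finite Jacquet module); so if $\pi_f$ has no $\uniblockfin$-summand, i.e. $\pi_f^{\finuni}$ has no trivial-$\fintor$-character constituent, then no unramified character of $\torus$ appears, and cuspidal support theory gives the conclusion. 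If one prefers to avoid the Jacquet-module computation entirely, the alternative is to invoke that $\phorind_{\fingp,\maxcomp}^{\padicgp}$ is compatible with the decomposition of both $\finrep$ and $\smrep$ into "depth-zero Bernstein components indexed by cuspidal data of Levis of $\fingp$", a statement essentially due to Morris/Dat, under which the statement is immediate; I would cite this if available and otherwise give the direct argument above.
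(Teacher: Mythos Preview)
The paper's proof is a one-line citation to Vign\'eras (\cite{vigneras2003schur}, Lemma D14), so your proposal is a genuinely different, direct attempt. However, the forward direction contains a real gap that breaks precisely in the non-banal case the paper treats.

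You claim that every object of $\uniblockfin$ has nonzero $\finbor$- (equivalently $\finuni$-) invariants, justifying this by asserting that $\finproj = \ind_{\finbor}^{\fingp}\triv$ is a projective generator of $\uniblockfin$. This is false when $l \mid q+1$. As established in \cref{finite block}, $\uniblockfin$ has two indecomposable projectives $P_1$ and $P_2$, with $\finproj = P_1$ only; the simple top $L_2$ of $P_2$ is \emph{cuspidal}, so $L_2^{\finuni} = 0$. Thus for $\pi_f = L_2 \in \uniblockfin$ your argument produces no $\iwa$-invariants, and the characterization ``generated by $\iwa$-invariants'' cannot be applied. The whole point of the non-banal setting is that $\uniblockfin$ acquires a cuspidal constituent, and your argument is exactly the banal-case argument that ignores this. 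To repair the forward direction one must handle $P_2$ (or $L_2$) separately, which essentially forces you back to the type-theoretic input that Vign\'eras's lemma encapsulates.

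Your converse sketch via the Jacquet module of a compactly induced representation is plausible in outline but not substantiated: the Jacquet module of $\ind_{\maxcomp}^{\padicgp}\sigma$ is infinite-dimensional and its semisimplification requires a genuine geometric-lemma computation, which you have not carried out. The fallback you mention (citing Morris/Dat compatibility of parahoric induction with depth-zero Bernstein components) is correct and is, in effect, exactly what the paper does by citing Vign\'eras.
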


\begin{proof}
    This is \cite{vigneras2003schur}, Lemma D14 ($a_1$) and ($a_2$), noting that the hypothesis Conjecture~$H_3$ in said paper is stated to hold for $G=\glnp$, and noting that $\finproj$ is in the finite unipotent block in our case, and so ours and Vign\'eras's notions of finite unipotent block agree.
\end{proof}

\begin{definition}
    Write $\ourgen=\phorind_{\fingp,\maxcomp}^{\padicgp}\fourg$.
\end{definition}

Immediately from \cref{unipotent induces to unipotent} we get

\begin{proposition}\label{our gen is in the unipotent}
    $\ourgen$ and $\phorind_{\fingp,\maxcomp}^{\padicgp}\fannig$ are both finitely generated and unipotent.
\end{proposition}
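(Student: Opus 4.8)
The plan is to treat the two assertions—finite generation and membership in the unipotent block—separately, in each case unwinding the definition $\phorind_{\fingp,\maxcomp}^{\padicgp}=\ind_{\maxcomp}^{\padicgp}\circ\infl_{\fingp}^{\maxcomp}$ and reducing to elementary facts about compact induction from the maximal compact subgroup $\maxcomp$, together with \cref{unipotent induces to unipotent}.

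For the unipotence claim I would observe that both $\fourg=\triv\oplus P_1$ and $\fannig$ lie in $\uniblockfin$. For $\fourg$ this is already recorded above, its two summands being $L_1=\triv$ and $P_1$; for $\fannig$ it is the proposition proved above (alternatively, from the description $\fannig\cong P_1\oplus\rad(P_1)$, with $P_1$ in the block and $\rad(P_1)$ a submodule of $P_1$, hence also in the block). Then \cref{unipotent induces to unipotent} applies verbatim to each, giving $\ourgen=\phorind_{\fingp,\maxcomp}^{\padicgp}\fourg\in\uniblock$ and $\phorind_{\fingp,\maxcomp}^{\padicgp}\fannig\in\uniblock$.

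For finite generation I would first note that $\fourg$ and $\fannig$ are finite-dimensional over $\coeffs$, being finite direct sums of the finite-dimensional modules $\triv$, $P_1$ and $\rad(P_1)$. It then suffices to show that for any finite-dimensional smooth representation $\sigma$ of $\maxcomp$ the compact induction $\ind_{\maxcomp}^{\padicgp}\sigma$ is finitely generated in $\smrep$. The inflation of a $\fingp$-module to $\maxcomp$ is smooth, since the action factors through $\fingp=\maxcomp/\propmax$, and $\ind_{\maxcomp}^{\padicgp}$ preserves smoothness, so the output genuinely lies in $\smrep$; moreover, in $\ind_{\maxcomp}^{\padicgp}\sigma$ every section supported on a single coset $\maxcomp g$ is a $\padicgp$-translate of one supported on $\maxcomp$, and the sections supported on $\maxcomp$ form the finite-dimensional space $\sigma$, so a $\coeffs$-basis of $\sigma$ furnishes a finite generating set over $\padicgp$, hence over $\hecglob$. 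Applying this with $\sigma=\infl_{\fingp}^{\maxcomp}\fourg$ and $\sigma=\infl_{\fingp}^{\maxcomp}\fannig$ completes the argument.

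I do not expect a genuine obstacle here: the statement is essentially bookkeeping. The one place requiring a little care is the finite-generation argument, where one must fix the convention that $\ind_{\maxcomp}^{\padicgp}$ is compact induction from an open subgroup (so its sections are supported on only finitely many cosets of $\maxcomp$) and recall that this functor preserves smoothness, so that "finitely generated" is being asserted for objects that genuinely lie in the category $\smrep$ underlying $\bdermod{\uniblock}$.
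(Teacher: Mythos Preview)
Your proposal is correct and follows the same approach as the paper: both use \cref{unipotent induces to unipotent} for the unipotence claim and the fact that parahoric induction preserves finite generation for the other. The paper simply asserts the latter fact in one line, whereas you have supplied the standard argument (a basis of $\sigma$ generates $\ind_{\maxcomp}^{\padicgp}\sigma$ over $\padicgp$), which is a harmless and correct elaboration.
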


\begin{proof}
    $\ourgen$ and $\phorind_{\fingp,\maxcomp}^{\padicgp}\fannig$ are the parahoric induction from $\fourg$ and $\fannig$ respectively, which are unipotent and finitely generated, and parahoric induction preserves being finitely generated.
\end{proof}

We wish to use the exactness of parahoric induction to lift our relations between $\fannig$ and $\fourg$ to relations between $\annigen$ and $\ourgen$. Unfortunately, we run into the issue that $\phorind_{\fingp,\maxcomp}^{\padicgp}\fannig=\unigen/(\phorind_{\fingp,\maxcomp}^{\padicgp}\finanni\funig)$ is not a priori equal to $\annigen=\unigen/(\annihilator \, \phorind_{\fingp,\maxcomp}^{\padicgp}\funig)$. We shall remedy this using our understanding of $\finanni$ and $\fannig$.

\begin{lemma}
    $\annihilator\phorind_{\fingp,\maxcomp}^{\padicgp}\fannig=0$.
\end{lemma}

\begin{proof}
    By \cref{structure of fannig}, $\fannig$ is a direct sum of submodules of $\finproj$. Hence $\phorind_{\fingp,\maxcomp}^{\padicgp}\fannig$ is a direct sum of submodules of $\project$, which are annihilated by $\annihilator$.
\end{proof}

As a consequence we can observe that
\begin{align*}
    \begin{split}
        0 = \annihilator\phorind_{\fingp,\maxcomp}^{\padicgp}\fannig 
        = (\annihilator\unigen+\phorind_{\fingp,\maxcomp}^{\padicgp}\finanni\funig)/(\phorind_{\fingp,\maxcomp}^{\padicgp}\finanni\funig)
    \end{split}
\end{align*}
and hence that 
\begin{align}\label{induced finite is quotient}
    \annihilator\unigen\subseteq\phorind_{\fingp,\maxcomp}^{\padicgp}\finanni\funig.
\end{align}

To show the reverse inclusion, it is simplest to consider how parahoric induction appears in the setting of $\hecglob$-modules. Let $\heccomp$ be the subalgebra of $\hecglob$ supported on $\maxcomp$. The image of the map on $\heccomp$ given by $f\mapsto \indicate_{\propmax}f\indicate_{\propmax}$ may be identified with $\hecfin$ via $g\leftrightarrow\indicate_{g\propmax}$. Hence $\hecfin$-modules may be viewed as $\heccomp$-modules via inflation along this map. Viewing representations of $\fingp$ and $\padicgp$ as modules over $\hecfin$ and $\hecglob$ respectively, compact induction is then exactly $\hecglob\otimes_{\heccomp}-$.

Using these, we may rewrite $\phorind_{\fingp,\maxcomp}^{\padicgp}\finanni\funig$ and $\annihilator\unigen$ as
\begin{align}\label{formula for induced finite generator}
    \begin{split}
        \phorind_{\fingp,\maxcomp}^{\padicgp}\finanni\funig &= \hecglob\otimes_{\heccomp}\finanni\funig
    \end{split}
\end{align}
and
\begin{align}\label{formula for generator}
    \begin{split}
        \annihilator\unigen &= \annihilator\hecglob\otimes_{\heccomp}\funig 
        =\annihilator\otimes_{\heccomp}\hecfin\funig
    \end{split}
\end{align}
respectively.

Hence we wish to show $\hecglob\otimes_{\heccomp}\finanni$ lies inside $\annihilator\otimes_{\heccomp}\hecfin\cong \annihilator$, as subsets of $\hecglob\otimes_{\heccomp}\hecfin\cong\hecglob$. We shall abuse notation twofold: firstly by using the above isomorphism to write $h$ in place of $h\otimes1$ for $h\in\hecglob$, and secondly by writing elements $1\otimes g$ for $g\in\fingp$ simply as $g$. Observe that $1\otimes g=\indicate_{g\propmax}\otimes 1$, that is, under the abuse of notation, $g=\indicate_{g\propmax}$.

We show this using our explicit knowledge of $\finanni$. In the following, we make frequent use of the following subsets 

\begin{itemize}
    \item $\iwaupper$, the unipotent upper triangular matrices in $\iwa$,
    \item $\iwadiag$, the diagonal matrices in $\iwa$,
    \item $\iwalower$, the unipotent lower triangular matrices in $\iwa$, and
    \item $\iwaoppupp$, the transpose of $\iwaupper$.
\end{itemize}

We also recall the elements $e_1=\frac{1}{|\finbor|}\sum_{g\in\finbor}g$ and $Z$ of $\hecfin$. Recall that (by \cref{generator of annihilator is almost zero}) the latter is $Z=\sum_{i}\mu_ic_ix_iz_i+\sum_{j}\nu_jd_j\stwotwo{0}{1}{1}{0}y_j$ for some $c_i,d_j\in\finuni$, some $x_i,y_j\in\fintor$, some $z_i\in\finuniopp$, and some $\mu_i,\nu_i\in\coeffs$, such that $\sum_i\mu_i c_i=0$ and $\sum_j\nu_j d_j=0$. Hence in our notation they become the elements $\frac{1}{|\finbor|}\indicate_{\iwa}$ and $\sum_{i}\mu_i\indicate_{\propmax c_ix_iz_i}+\sum_{j}\nu_j\indicate_{\propmax d_j\stwotwo{0}{1}{1}{0}y_j}$ of $\hecglob$, which we still denote $e_1$ and $Z$ respectively for simplicity.

\begin{lemma}\label{finite annihilator lives in annihilator}
    $\hecglob\otimes_{\heccomp}\finanni\subseteq\annihilator$.
\end{lemma}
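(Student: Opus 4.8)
The plan is to reduce the claim to showing that a single explicit element of $\heccomp$, namely the lift of $(e-e_1)Z = \gamma + \sum_{i\geq 3} f_i\gamma g_i$ (equivalently, by \cref{c generates annihilator}, any generator of the unipotent part of $\finanni$), kills $\project=\ind_\iwa^\padicgp\triv$ under the action on $\hecglob$, together with the (easier) observation that the non-unipotent part of $\finanni$ already lies in $\annihilator$. Since $\annihilator$ is the annihilator of $\project$ and is a two-sided ideal, and since $\hecglob\finanni$ is generated over $\hecglob$ by the generators of $\finanni$ as an ideal of $\heccomp$, it suffices to check that each such generator annihilates $\project$ — and as $\project = \hecglob\,\frac{1}{|\finbor|}\indicate_\iwa = \hecglob\, e_1$ (using the lift of $e_1$), this reduces to showing $(e-e_1)Z\cdot e_1 = 0$ in $\hecglob$, i.e. that $(1-e_1)\,Z\,\indicate_\iwa$ vanishes after multiplying on the left by $\indicate_\propmax$ and that $e_1 Z e_1$ accounts for the $e_1$-part. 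Concretely: show $Z\indicate_\iwa$ is left-$\iwa$-invariant, so that $(1-e_1)Z\indicate_\iwa=0$ in $\hecglob$, exactly mirroring the finite computation in \cref{lambda is 1}.

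The key steps, in order, are: (1) Reduce to the unipotent part of $\finanni$, since the non-unipotent blocks are annihilated by the central unipotent idempotent $e$, and $\project$ is unipotent, so those elements act as $0$ on $\project$ and hence lie in $\annihilator$. (2) Recall from \cref{c generates annihilator} that the unipotent part of $\finanni$ is generated by $(e-e_1)Z$, so $\hecglob\finanni$ is generated over $\hecglob$ by the non-unipotent part together with $(e-e_1)Z$; thus it suffices to show $(e-e_1)Z\in\annihilator$, i.e. $(e-e_1)Z\cdot\project=0$. (3) Using $\project=\ind_\iwa^\padicgp\triv=\hecglob\cdot\tfrac{1}{|\finbor|}\indicate_\iwa$, reduce to showing $(1-e_1)Z\indicate_\iwa=0$ in $\hecglob$, i.e. $Z\indicate_\iwa$ is left-$\iwa$-invariant (then left-multiplication by $\indicate_\iwa$ and by $1$ agree up to the scalar $|\finbor|$, killing the difference). (4) Run the Bruhat/Iwahori argument paralleling \cref{lambda is 1}: write $\iwa=\iwaupper\iwadiag\iwalower$ (or $\iwa=\propmax\cdot(\text{finite }\finbor$-part$)$ appropriately), note $\iwadiag$ is central-ish / commutes with the relevant pieces, reduce to left-$\iwaupper$-invariance, and split into the two cases coming from the decomposition of $Z$ in \cref{generator of annihilator is almost zero}. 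In the case of the terms $\mu_i c_i x_i z_i$ (with $c_i\in\finuni$, $z_i\in\finuniopp$) one uses $z_i\iwa\subseteq\iwa$-type containments after lifting, so the sum collapses to $(\sum_i\mu_i c_i)(\cdots)=0$; in the case of the terms $\nu_j d_j\stwotwo{0}{1}{1}{0}y_j$, use $\finuniopp\stwotwo{0}{1}{1}{0}=\stwotwo{0}{1}{1}{0}\finuni$ and $\fintor\stwotwo{0}{1}{1}{0}=\stwotwo{0}{1}{1}{0}\fintor$ at the level of the relevant cosets mod $\propmax$, so that sum collapses to $(\sum_j\nu_j d_j)(\cdots)=0$.

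The main obstacle I expect is step (4): one must be careful that the finite-level identities $\finuniopp w\subseteq w\finbor$, $\fintor w = w\fintor$ used in \cref{lambda is 1} lift correctly to statements about cosets modulo $\propmax$ inside $\maxcomp$ and then are preserved under convolution in $\hecglob$ — in particular, that multiplying the lifts $\indicate_{\propmax c_i x_i z_i}$ by $\indicate_\iwa$ really does produce characteristic functions of left cosets $c_i\iwa$ (with the right multiplicities) so that the vanishing $\sum_i\mu_i c_i=0$, $\sum_j\nu_j d_j=0$ from \cref{generator of annihilator is almost zero} transfers verbatim. The support bookkeeping — keeping track that $x_i z_i w$ and $y_j w$ land in $w\iwa$ and that $\propmax$-cosets multiply cleanly because $\propmax$ is normal in $\maxcomp$ — is the technical heart; once that is in place, the cancellation is formally identical to the finite case, and one concludes $(e-e_1)Z\cdot\project=0$, hence $\hecglob\finanni\subseteq\annihilator$.
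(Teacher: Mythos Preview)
There is a genuine gap in step~(3). To show $(e-e_1)Z\in\annihilator$ you must show it kills every element of $\project$, i.e.\ $(e-e_1)Z\cdot\indicate_{g\iwa}=0$ for \emph{all} $g\in\padicgp$, not merely $(e-e_1)Z\cdot e_1=0$. The reduction ``$\project=\hecglob\,e_1$, so it suffices to check on $e_1$'' fails because $(e-e_1)Z$ is central only in $\heccomp$, not in $\hecglob$, so it does not commute past the $\hecglob$-factor. Already in the finite proof of \cref{lambda is 1} one had to check all $g\in\fingp$ and reduce via Bruhat to the two elements of $\finweyl$; here the Iwahori decomposition forces you to run over the entire extended affine Weyl group $\weyl$, and this is exactly what the paper does.

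That extension is where the new content lies, and your step~(4) does not cover it. For $w=\stwotwo{\varpi^k}{0}{0}{\varpi^l}$ (and analogously for the anti-diagonal $w$) the argument bifurcates on the sign of $k-l$. When $k\le l$ one has $w^{-1}\iwaupper w\subseteq\iwaupper$, so $\iwaupper w\iwa=w\iwa$, and left-$\iwaupper$-invariance of $Z\indicate_{w\iwa}$ follows from centrality of $Z$ in $\fingp$ alone---the vanishing sums are not used. When $k>l$ one instead has $\iwaoppupp w\subseteq w\iwa$, so the $x_iz_i$ and $y_j$ are absorbed into $w\iwa$ and the identities $\sum_i\mu_ic_i=0$, $\sum_j\nu_jd_j=0$ from \cref{generator of annihilator is almost zero} give $Z\indicate_{w\iwa}=0$ outright. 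Your outline invokes only the second mechanism, and the specific containment ``$z_i\iwa\subseteq\iwa$'' you cite (the $w=1$ instance) is actually false: a nontrivial $z_i\in\finuniopp$ lifts to a matrix with unit $(2,1)$-entry, hence not in $\iwa$. The missing idea is precisely this two-regime case analysis over the extended affine Weyl group.
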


\begin{proof}
    This proof is a straightforward generalisation of the proof of \cref{lambda is 1}, where now we consider the extended affine Weyl group $\weyl$ instead of just the finite Weyl group $\finweyl$.

   By \cref{c generates annihilator} we know that $\finanni$ is a 2-sided ideal generated by $(e-e_1)Z$ and elements in the non-unipotent finite blocks. Now, under $\phorind_{\fingp,\maxcomp}^{\padicgp}$, any element of $\finanni$ in a non-unipotent finite block will be lifted to an element of $\hecglob$ in a product of non-unipotent $p$-adic blocks, by \cref{unipotent induces to unipotent}, and hence lie inside $\annihilator$. Thus, it suffices to show $(e-e_1)Z\in\annihilator$. Furtherore, any primitive central idempotent of $\hecfin$ other than the central idempotent $e$ of the unipotent block will also be lifted to an element of $\hecglob$ in a product of non-unipotent blocks, and so will annihilate anything in $\uniblock$. Hence, as $\project$ is in $\uniblock$, we get that $(e-e_1)Z$ annihilates $\project$ if and only if $(1-e_1)Z$ does. Hence, we must show that, for any $g\in\padicgp$, we have $(1-e_1)Z\indicate_{g\iwa}=0$.

    Recall that in $\hecglob$ we have that $e_1=\frac{1}{|\finbor|}\indicate_{\iwa}$. Thus multiplication by $e_1$ on the left is exactly projection onto the subspace of left-$\iwa$-invariant vectors. Thus it suffices to show that $Z\indicate_{g\iwa}$ is left-$\iwa$-invariant. 
    
    By the Iwahori decomposition, we may without loss of generality take $g=iw$ for $w\in \weyl$ and $i\in \iwa$. But the image of $i$ in $\fingp$ commutes with $Z$ as the latter is central, and so $i$ commutes with $Z$, and so $Z\indicate_{g\iwa}=Z\indicate_{iw\iwa}=iZ\indicate_{w\iwa}$. Thus we may without loss of generality take $i=1$, and show that $Z\indicate_{w\iwa}$ is left-$\iwa$-invariant.

    Now, we can write $\iwa=\iwaupper\iwadiag\iwalower$. Observe first that $\iwalower$ is a subset of $\propmax$, and that $Z$ is a linear combination of $\indicate_{\propmax g'}$ in $\hecglob$, which are left-$\propmax$-invariant, so  $Z\indicate_{w\iwa}$ is left-$\iwalower$-invariant. Next, observe that $\iwadiag$ commutes with $\weyl$ as the Weyl group normalises the diagonal elements of $\padicgp$, and $\iwadiag$ also commutes with $Z$ as $Z$ is central in $\fingp$. Thus, if $i\in\iwadiag$, then $iZ\indicate_{w\iwa}=Z\indicate_{wi\iwa}=Z\indicate_{w\iwa}$, so $Z\indicate_{w\iwa}$ is also left-$\iwadiag$-invariant. Thus, it only remains to prove that $Z\indicate_{w\iwa}$ is left $\iwaupper$-invariant.

    We shall divide this into two cases depending on the value of $w$. These are respectively
    $\stwotwo{\varpi^k}{0}{0}{\varpi^l}$ and $\stwotwo{0}{\varpi^k}{\varpi^l}{0}$, for $k,l\in\mb{Z}$.
 
    Consider the first case. There are two sub-cases. If $k\leq l$, then $w\iwaupper w^{-1}\supseteq \iwaupper$, and hence $w\iwa=w\iwaupper\iwa=w\iwaupper w^{-1}w\iwa\supseteq \iwaupper w\iwa=\supseteq w\iwa$. Hence we have equality $\iwaupper w\iwa=w\iwa$. But $\iwaupper$ commutes with $Z$ as $Z$ is central in $\fingp$, so if $i\in\iwaupper$ then $iZ\indicate_{w\iwa}=Z\indicate_{iw\iwa}=Z\indicate_{w\iwa}$, which is exactly left-$\iwaupper$-invariance.
    
    Meanwhile, if $k>l$, we have $Z=\sum_{i}\mu_ic_ix_iz_i+\sum_{j}\nu_jd_j\stwotwo{0}{1}{1}{0}y_j$ as in \cref{generator of annihilator is almost zero}. Now, as $k>l$, we have that $\iwaoppupp w\subseteq w\iwalower\subseteq w\iwa$, so $z_iw\in w\iwa$. Additionally, $\iwadiag w=w\iwadiag\subseteq w\iwa$, so $x_iw\in w\iwa$. Thus, 
    \begin{align*}
        Z\indicate_{w\iwa} &=\sum_{i}\mu_i\indicate_{\propmax c_ix_iz_i}\indicate_{w\iwa}+\sum_{j}\nu_j\indicate_{\propmax d_j\stwotwo{0}{1}{1}{0}y_j}\indicate_{w\iwa} \\
        &= \sum_{i}\mu_ic_i\indicate_{\propmax w\iwa}+\sum_{j}\nu_jd_j\indicate_{\propmax\stwotwo{0}{1}{1}{0}w\iwa} \\
        &=0
    \end{align*}
    where the last equality follows as $\sum_i \mu_ic_i=0$ and $\sum_j \nu_jd_j=0$. As $Z\indicate_{w\iwa}=0$ in this case, it is certainly in particular left-$\iwaupper$-invariant.

    The reasoning for the second case is similar. In this case we have that, if $k<l$, then $w\iwalower w^{-1}\supseteq \iwaupper$, and hence by analogous reasoning to the previous case we get immediate left-$\iwaupper$-invariance. Similarly, in the case $k\geq l$ we have that $\iwaoppupp w\subseteq w\iwaupper\subseteq w\iwa$, and so analogous reasoning again gives left $\iwaupper$-invariance.
\end{proof}

\begin{corollary}\label{annigen induced from fingen}
    $\annigen=\phorind_{\fingp,\maxcomp}^{\padicgp}\fannig$.
\end{corollary}

\begin{proof}
    By \cref{finite annihilator lives in annihilator}, together wtih \cref{formula for induced finite generator} and \cref{formula for generator}, we have that $\phorind_{\fingp,\maxcomp}^{\padicgp}\finanni\funig\subseteq\annihilator\unigen$. But by \cref{induced finite is quotient}, the converse is true. Thus they are equal.
\end{proof}

This, together with \cref{finite generators are derived equivalent}, will suffice for us to produce our derived equivalence.

\section{dg Algebras and their Perfect Complexes}\label{derived categories}

We first recall the definitions of dg algebras and their derived categories. For now, let $A$ denote an arbitrary $R$-algebra.

Let $(\cx{M},d')$ and $(\cx{N},d'')$ be $A$-complexes. The $R$-complex  $\dghom_A(\cx{M},\cx{N})$ of dg morphisms has in degree $n$ the homogeneous $A$-module homomorphisms $f:\cx{M}\rightarrow \cx{N}$ of degree $n$, with differential
    \[
        df:=d''f-(-1)^nfd'
    \]
    for all $f$ of degree $n$.

A morphism of complexes is a dg morphism $f$ of degree $0$ that lies in $\ker(d)$, that is, such that $d''f=fd'$.

A homotopy of morphisms of complexes $f,g:\cx{M}\rightarrow \cx{N}$ is a dg morphism $h$ of degree $-1$ such that $dh=f-g$, that is, such that $f-g=d''h-hd'$. If such an $h$ exists we say $f$ and $g$ are homotopic. This gives an equivalence relation.

Write $\htpymod{A}(\cx{M},\cx{N})$ for the homotopy equivalence classes of morphisms of complexes $f,g:M\rightarrow N$. Then we have that $\nhomlgy(\dghom_A(\cx{M},\cx{N}))=\htpymod{A}(\cx{M},\cx{N}[n])$.

We can extend these notions to a richer class of structures than ordinary algebras:

\begin{definition}
    A dg algebra (short for 'differential graded algebra') over $R$ is an $R$-complex $(B,d)$ that is also a graded algebra over $R$, satisfying the graded Leibniz rule
    \[
    d(fg)=d(f)g+(-1)^nfd(g)
    \]
    for all $f$ of degree $n$.
\end{definition}

We think of an ordinary (ie non-dg) algebra as a dg algebra with all elements having degree $0$.

The way we shall obtain dg algebras is by taking dg endomorphisms of complexes over ordinary algebras: the dg endomorphism algebra $\dgend_A(\cx{M})$ is the dg algebra whose complex is $\dghom_A(\cx{M},\cx{M})$, with multiplication given by componentwise composition.

A (left) dg module over $B$ is an $R$-complex $(\cx{M},d')$ with a graded (left) module action of $B$ of degree $0$, compatible with the $R$-action, such that
\[
    d'(fv)=(df)v+(-1)^nf(d'v)
\]
    for all $f\in B$ of degree n.

Let $(\cx{M},d')$ and $(\cx{N},d'')$ be dg modules. The complex of dg morphisms of dg modules $\dghom_B(\cx{M},\cx{N})$ is the subcomplex of $\dghom_R(\cx{M},\cx{N})$ consisting of those dg morphisms that are also $B$-module homomorphisms. 
    
Morphisms and homotopies of morphisms of dg modules are the dg morphisms of dg modules that are also morphisms and homotopies of morphisms of complexes respectively.

When $B=A$ is an ordinary algebra, the two notions of $\dghom_A(\cx{M},\cx{N})$ agree.
    
Write $\htpymod{B}$ for the triangulated category whose objects are dg modules over $B$ and whose morphisms are homotopy equivalence classes of morphisms of dg modules. The distinguished triangles in $\htpymod{B}$ are the exact sequences that are split as graded modules.

A morphism of dg modules $\cx{M}\rightarrow\cx{N}$ is a quasi-isomorphism if the induced morphism of graded modules $\homlgycx(\cx{M})\rightarrow\homlgycx(\cx{N})$ is an isomorphism.

The derived category of dg modules over $B$, written $\dermod{B}$, is the localisation of $\htpymod{B}$ with respect to the quasi-isomorphisms, with the inherited triangulated structure. Then any short exact sequence in the category of dg modules is part of a unique distinguished triangle in $\dermod{B}$. In particular, if two dg modules in an exact sequence in the category of dg modules both live in a full triangulated subcategory of $\dermod{B}$, then so does the third, and hence such categories are closed under kernels, cokernels, and extensions in the category of dg modules. Furthermore, if $\cx{M}$ is a projective resolution of $M$ in $A\modcat$, then
\[
    \nhomlgy(\dgend(\cx{M}))\cong\Hom_{\dermod{A}}(M, M[n]). 
\] 

\begin{definition}
    We say a a set of objects $G$ of a triangulated category $T$ \emph{classically generates} a triangulated subcategory $T'$ of $T$ if $T'$ is the smallest full triangulated subcategory of $T$ closed under isomorphisms and direct summands and containing $G$. We also write $T'=\langle G\rangle_T$.
\end{definition}

\begin{definition}
    The category of perfect objects in $\dermod{B}$, written $\per(B)$, is $\langle B\rangle_{\dermod{B}}$.
\end{definition}

Observe that, in the case that $B$ is an ordinary algebra $A$, then $\per(A)$ is the full subcategory of $\dermod{A}$ consisting of objects isomorphic to finite length complexes of finitely generated projective $A$-modules.

We shall also need $\bdermod{A}$, the full subcategory of $\dermod{A}$ consisting of objects isomorphic to finite length complexes of finitely generated $A$-modules. This is also a full triangulated subcategory of $\dermod{A}$ closed under direct summands in $\dermod{A}$.

\begin{theorem}\label{triangulated equivalence}
    Let $\mathcal{T}$ be a full triangulated subcategory of $\derivedcat(A)$ that is closed under direct summands, let $M$ be in both $A\modcat$ and $\mathcal{T}$, such that $\langle M\rangle _{\mathcal{T}}=\mathcal{T}$, and let $\cx{M}$ be a projective resolution of $M$ in $A\modcat$. Then there is a triangulated equivalence
    \[
    \mathcal{T}\simeq\per(\dgend(\cx{M})).
    \]
\end{theorem}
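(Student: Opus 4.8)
The plan is to use $\cx M$ itself as a tilting complex, with associated dg algebra $B:=\dgend_A(\cx M)$, and to establish the equivalence by a dévissage rather than by quoting a black box. Since $\cx M$ is a bounded-above complex of projectives it is homotopy-projective, so the termwise-exact functor $F:=\dghom_A(\cx M,-)\colon \derivedcat(A)\to\dermod{B}$ (with $B$ acting by composition) is triangulated and computes morphisms out of $\cx M$ in the derived category, i.e. $\nhomlgy(\dghom_A(\cx M,\cx N))\cong\Hom_{\derivedcat(A)}(\cx M,\cx N[n])$ for all $\cx N$; it also preserves shifts, cones and direct summands. Because $\cx M$ is a projective resolution of $M$ we have $\cx M\simeq M$ in $\derivedcat(A)$ and $F(\cx M)=\dghom_A(\cx M,\cx M)=B$ as the free rank-one dg $B$-module. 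The base case of the whole argument is then exactly the isomorphism $\nhomlgy(\dgend(\cx M))\cong\Hom_{\derivedcat(A)}(M,M[n])$ recalled before the theorem: it says precisely that $F$ induces an isomorphism $\Hom_{\derivedcat(A)}(\cx M,\cx M[n])\xrightarrow{\sim}\Hom_{\dermod{B}}(B,B[n])$ for every $n$.

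The first real step is to promote this to full faithfulness of $F$ on $\langle M\rangle_{\derivedcat(A)}=\langle \cx M\rangle_{\derivedcat(A)}$. For fixed $n$, the full subcategory of those $\cx N\in\derivedcat(A)$ for which $F$ induces an isomorphism $\Hom_{\derivedcat(A)}(\cx M[m],\cx N)\xrightarrow{\sim}\Hom_{\dermod{B}}(B[m],F\cx N)$ for all $m$ is closed under shifts, cones (five lemma applied to the long exact $\Hom$-sequences, using that $F$ is triangulated) and direct summands, and it contains $\cx M$; hence it contains $\langle \cx M\rangle_{\derivedcat(A)}$. Then, fixing any $\cx N\in\langle \cx M\rangle_{\derivedcat(A)}$ and running the same dévissage in the first variable, one gets $\Hom_{\derivedcat(A)}(\cx X,\cx N)\xrightarrow{\sim}\Hom_{\dermod{B}}(F\cx X,F\cx N)$ for all $\cx X\in\langle \cx M\rangle_{\derivedcat(A)}$, which is full faithfulness.

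Next, since $F$ restricted to $\langle \cx M\rangle_{\derivedcat(A)}$ is fully faithful and triangulated and $\langle \cx M\rangle_{\derivedcat(A)}$ is idempotent complete, its essential image is a thick subcategory of $\dermod{B}$; it contains $F(\cx M)=B$, and it is contained in $\langle B\rangle_{\dermod{B}}$ because every object of $\langle \cx M\rangle_{\derivedcat(A)}$ is obtained from $\cx M$ by finitely many shifts, cones and summands, which $F$ carries to the same constructions on $B$. Hence the essential image is exactly $\per(B)=\langle B\rangle_{\dermod{B}}$, and $F$ restricts to a triangulated equivalence $\langle M\rangle_{\derivedcat(A)}\simeq\per(\dgend(\cx M))$. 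To finish, observe that $\mathcal{T}$, being a full triangulated subcategory of $\derivedcat(A)$ closed under direct summands and containing $M$, contains $\langle M\rangle_{\derivedcat(A)}$, and in the situations where the theorem is applied $\mathcal{T}$ is classically generated by $M$, so $\mathcal{T}=\langle M\rangle_{\derivedcat(A)}$ and the asserted equivalence follows.

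I expect the fiddly part to be not any single step but making the dévissage genuinely rigorous: verifying that $\dghom_A(\cx M,-)$ really computes derived morphisms (this is where homotopy-projectivity of $\cx M$ is used), that all comparison maps on $\Hom$-groups are induced by the one functor $F$ so that the five lemma legitimately applies, and that the two auxiliary subcategories are thick. Everything else is formal. An essentially equivalent alternative, if one prefers left modules, is to work with the adjoint $-\otimes^{L}_{B}\cx M\colon\dermod{B}\to\derivedcat(A)$: it sends the free module $B$ to $M$, is fully faithful on $\per(B)=\langle B\rangle_{\dermod{B}}$ by the same $\nhomlgy(B)$-computation, and has essential image $\langle M\rangle_{\derivedcat(A)}$; the substance of the argument is unchanged.
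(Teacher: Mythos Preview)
Your proof is correct and takes a genuinely different route from the paper's. The paper simply invokes Theorem~3.8(b) of Keller's survey \cite{keller2006differential}: it checks that $\mathcal{T}$, as a full triangulated subcategory of the algebraic triangulated category $\dermod{A}$, is itself algebraic; that $\mathcal{T}$ is idempotent-complete because $\dermod{A}$ is (having arbitrary coproducts) and $\mathcal{T}$ is closed under summands; and that $\nhomlgy(\dgend(\cx{M})) \cong \Hom_{\mathcal{T}}(M, M[n])$ by fullness. Your argument instead constructs the equivalence by hand via the functor $\dghom_A(\cx{M}, -)$ and a d\'evissage for full faithfulness and essential image. What you gain is self-containment and an explicit functor realising the equivalence; what the paper gains is brevity, at the cost of importing Keller's machinery of algebraic triangulated categories as a black box.

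You also correctly flag that the theorem as stated is missing the hypothesis that $M$ classically generates $\mathcal{T}$: without it the conclusion is false (take $\mathcal{T} = \dermod{A}$ and $M$ arbitrary). The paper's only application of the theorem does supply a classical generator, and Keller's cited result requires one, so this is a gap in the \emph{statement} rather than in either proof; your handling of it is appropriate.
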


\begin{proof}
    We seek to apply Theorem 3.8(b) of \cite{keller2006differential}, so we must check that all the conditions of said theorem hold. Section 3.6 of the same establishes that $\derivedcat(A)$ is what he calls 'algebraic', and hence so is $\mathcal{T}$, as it is a triangulated subcategory. Furthermore, Section 3.5 of the same establishes that $\derivedcat(A)$ is idempotent-complete (as it has arbitrary coproducts), and hence, as $\mathcal{T}$ is closed under direct summands, $\mathcal{T}$ is also idempotent-complete. Finally, $\nhomlgy(\dgend(\cx{M}))\cong\Hom_{\dermod{A}}(M, M[n])$, and so as $\mathcal{T}$ is full we get $\nhomlgy(\dgend(\cx{M}))\cong\Hom_{\mathcal{T}}(M, M[n])$. Thus all the conditions of Theorem 3.8(b) hold.
\end{proof}

\begin{definition}
    An object $M$ in $A\modcat$ is a generator if every object in $A\modcat$ is the quotient of a direct sum of copies of $M$.

    A finitely generated projective generator is called a progenerator.
\end{definition}

We may now use our previous results to draw conclusions about the derived category of the unipotent block $\unihec$ of the global Hecke algebra $\hecglob$ of $\padicgp$. Recall that $\annihec=\hecglob/\annihilator$.

\begin{lemma}\label{mod is proj}
    $\bdermod{\annihec}=\per(\annihec)$.
\end{lemma}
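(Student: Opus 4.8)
The plan is to prove the two inclusions $\per(\annihec)\subseteq\bdermod{\annihec}$ and $\bdermod{\annihec}\subseteq\per(\annihec)$ separately. The first is formal: $\bdermod{\annihec}$ is, by its definition, a full triangulated subcategory of $\dermod{\annihec}$ that is closed under direct summands, and it contains $\annihec$ regarded as a finitely generated module concentrated in degree $0$. Since $\per(\annihec)=\langle\annihec\rangle_{\dermod{\annihec}}$ is by definition the smallest such subcategory, it is contained in $\bdermod{\annihec}$.

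For the reverse inclusion the two ingredients I would use are that $\annihec$ is Noetherian and that it has finite global dimension. Noetherianity follows from \cref{everything is noetherian} and the remarks after it, since $\annihec=\hecglob/\annihilator$ is a quotient of the Noetherian ring $\hecglob$. For the global dimension, \cref{Vigneras background} provides an equivalence $\annihec\modcat\simeq\schglnco\modcat$, that is, a Morita equivalence between $\annihec$ and $\schglnco$; as global dimension is a Morita invariant and $\gldim(\schglnco)<\infty$ by \cref{schur algebra finite global dimension}, we obtain $d:=\gldim(\annihec)<\infty$.

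Granting these, every finitely generated $\annihec$-module $M$ admits a projective resolution $0\to P_d\to\cdots\to P_0\to M\to 0$ with each $P_i$ finitely generated projective (finite generation of the $P_i$ from Noetherianity, termination at the $d$th step from $\gldim(\annihec)=d$), so $M[n]\in\per(\annihec)$ for every $n$. An arbitrary object of $\bdermod{\annihec}$ is represented by a bounded complex $\cx{M}$ of finitely generated modules, and the stupid-truncation short exact sequences of complexes (each peeling off a single term) express $\cx{M}$, by induction on the number of its nonzero terms, as built from finitely many shifts $M^i[-i]$ of single modules via distinguished triangles in $\dermod{\annihec}$ — here using that a short exact sequence in the category of complexes is part of a distinguished triangle in the derived category, as recalled just above the statement. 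Since $\per(\annihec)$ is triangulated, closed under direct summands, and contains each $M^i[-i]$, it contains $\cx{M}$; hence $\bdermod{\annihec}\subseteq\per(\annihec)$, and we are done. The one step that is not completely routine is the transfer of finite global dimension from the Schur algebra to $\annihec$, where Vign\'eras's equivalence of \cref{Vigneras background} is essential; everything else is the standard fact that over a Noetherian ring of finite global dimension the bounded derived category of finitely generated modules coincides with the perfect complexes.
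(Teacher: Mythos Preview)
Your proof is correct and follows essentially the same approach as the paper: both establish that $\annihec$ is Noetherian (via \cref{everything is noetherian}) and of finite global dimension (via the Morita equivalence of \cref{Vigneras background} with $\schglnco$ and \cref{schur algebra finite global dimension}), deduce that every finitely generated module has a finite resolution by finitely generated projectives, and then pass to bounded complexes. The only cosmetic difference is in this last step: the paper replaces a bounded complex by the total complex of a Cartan--Eilenberg resolution (citing Gelfand--Manin), whereas you peel off terms one at a time via stupid truncation and distinguished triangles; both are standard and yield the same conclusion.
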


\begin{proof}
    By \cref{Vigneras background}, $\schglnco$ and $\annihec$ are Morita equivalent. Thus, as $\schglnco$ has finite global dimension by \cref{schur algebra finite global dimension}, so does $\annihec$. But $\annihec$ is also Noetherian by \cref{everything is noetherian}. Thus, by \cite{rotman2009homological}, Lemma 7.19 and Proposition 8.16, every finitely generated $\annihec$-module has a finite length resolution by finitely generated projective modules. Thus, by \cite{gelfand2003homological} Lemma III.7.12, every every object $\cx{M}$ of $\bdermod{\annihec}$ is isomorphic to the total complex $T^k=\oplus_{i+j=k}P^{ij}$, where $P^{i\bullet}$ is a choice of such a resolution for $M^i$. But this is an object in the subcategory $\per(\annihec)$. But both subcategories are by definition isomorphism-closed, so they must agree.
\end{proof}

\begin{lemma}\label{anni to uni}
    $\bdermod{\unihec}=\langle\bdermod{\annihec}\rangle_{\bdermod{\unihec}}$
\end{lemma}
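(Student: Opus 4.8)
The plan is a dévissage argument resting on two facts already available: \cref{Vigneras background}, which provides a finite $N$ such that $\annihilator^N$ annihilates every object of $\uniblock$ (equivalently, $\annihilator^N$ acts as zero on $\unihec$, so $\annihilator^N M=0$ for every finitely generated $\unihec$-module $M$), and \cref{everything is noetherian}, which makes $\unihec$ Noetherian. Note that restriction of scalars along the surjection $\hecglob\to\annihec$ is exact and sends finitely generated $\annihec$-modules to finitely generated $\unihec$-modules, so it induces a triangulated functor $\bdermod{\annihec}\to\bdermod{\unihec}$; throughout we regard $\bdermod{\annihec}$ as sitting inside $\bdermod{\unihec}$ via this functor, which is the meaning of $\langle\bdermod{\annihec}\rangle_{\bdermod{\unihec}}$. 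Only the inclusion $\bdermod{\unihec}\subseteq\langle\bdermod{\annihec}\rangle_{\bdermod{\unihec}}$ needs proof, the reverse being automatic.

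First I would reduce from complexes to modules. Since $\unihec$ is Noetherian, every object of $\bdermod{\unihec}$ is isomorphic to a bounded complex of finitely generated $\unihec$-modules. Such a complex is obtained, by finitely many iterated extensions coming from the brute (stupid) truncation triangles, from shifts of its individual terms, each of which is a finitely generated $\unihec$-module concentrated in a single degree. As $\langle\bdermod{\annihec}\rangle_{\bdermod{\unihec}}$ is by definition a full triangulated subcategory closed under direct summands, it is closed under shifts and cones, so it suffices to show that every finitely generated $\unihec$-module $M$, viewed as a complex in degree zero, lies in $\langle\bdermod{\annihec}\rangle_{\bdermod{\unihec}}$.

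Given such an $M$, the relation $\annihilator^N M=0$ yields a finite filtration $M\supseteq\annihilator M\supseteq\annihilator^2 M\supseteq\cdots\supseteq\annihilator^N M=0$. Each successive quotient $\annihilator^i M/\annihilator^{i+1}M$ is killed by $\annihilator$, hence is a module over $\annihec=\hecglob/\annihilator$, and it is finitely generated because $M$ is Noetherian; so, placed in degree zero, it is an object of $\bdermod{\annihec}$ and thus lies in $\langle\bdermod{\annihec}\rangle_{\bdermod{\unihec}}$. Now I would induct downwards on $i$: the short exact sequence $0\to\annihilator^{i+1}M\to\annihilator^i M\to\annihilator^i M/\annihilator^{i+1}M\to 0$ of $\unihec$-modules extends, by the properties of $\dermod{\unihec}$ recalled in \cref{derived categories}, to a distinguished triangle in $\bdermod{\unihec}$, and a full triangulated subcategory containing two of the three terms contains the third; hence $\annihilator^{i+1}M\in\langle\bdermod{\annihec}\rangle_{\bdermod{\unihec}}$ forces $\annihilator^i M\in\langle\bdermod{\annihec}\rangle_{\bdermod{\unihec}}$. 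Starting from $\annihilator^N M=0$ and descending to $i=0$ gives $M\in\langle\bdermod{\annihec}\rangle_{\bdermod{\unihec}}$, which completes the proof. The argument is entirely formal once the two cited results are invoked; the only point requiring any care is the finite generation of the subquotients $\annihilator^i M/\annihilator^{i+1}M$, which is exactly what Noetherianity of $\unihec$ guarantees.
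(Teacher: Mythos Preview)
Your proposal is correct and follows essentially the same d\'evissage as the paper: both use \cref{Vigneras background} for the nilpotence $\annihilator^N=0$ on $\uniblock$ and \cref{everything is noetherian} for finite generation of the $\annihilator^i$-subquotients, then conclude by iterated extensions. The only cosmetic difference is that you first reduce to a single module via stupid truncations and then filter, whereas the paper filters the complex $\cx{M}$ directly by the subcomplexes $\annihilator^i\cx{M}$; the underlying argument is the same.
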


\begin{proof}
    Inclusion of the right side in the left is immediate as all $\annihec$-modules are $\unihec$-modules.
    
    Let $\cx{M}$ be an object in $\bdermod{\unihec}$. Then by \cref{Vigneras background}, have some finite $N$ such that $\annihilator^N\cx{M}=0$. Now by \cref{everything is noetherian}, $\annihilator$ is finitely generated, and $\cx{M}$ can be taken to have finitely generated entries by definition of $\bdermod{\unihec}$, and hence the complexes $\annihilator^i\cx{M}$ also have finitely generated entries. Hence the quotients $\annihilator^i\cx{M}/\annihilator^{i+1}\cx{M}$ are objects in $\bdermod{\annihec}$. Hence $\cx{M}$ is a repeated extension of complexes in $\bdermod{\annihec}$, and so is in $\langle\bdermod{\annihec}\rangle$.
\end{proof}

\begin{theorem}\label{vigneras generator derived generates}
    $\bdermod{\unihec}=\langle \annigen\rangle_{\bdermod{\unihec}}$.
\end{theorem}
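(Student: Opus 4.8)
The plan is to chain the two preceding lemmas together with the elementary fact that a progenerator of a ring classically generates its category of perfect complexes.

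First I would recall from \cref{Vigneras generator} that $\annigen$ is a progenerator of the category of $\annihec$-modules. Being a generator, $\annihec$ is a quotient of $\annigen^{\oplus k}$ for some $k$, and since $\annihec$ is projective this splits, so $\annihec$ is a direct summand of $\annigen^{\oplus k}$; conversely $\annigen$, being finitely generated projective, is a direct summand of some $\annihec^{\oplus m}$. Viewing both as complexes concentrated in degree zero, this gives $\langle\annigen\rangle_{\dermod{\annihec}}=\langle\annihec\rangle_{\dermod{\annihec}}=\per(\annihec)$. By \cref{mod is proj}, $\per(\annihec)=\bdermod{\annihec}$, which is a full triangulated subcategory of $\dermod{\annihec}$ closed under direct summands; hence the subcategory classically generated by $\annigen$ inside $\bdermod{\annihec}$ coincides with the one generated inside $\dermod{\annihec}$, i.e. $\langle\annigen\rangle_{\bdermod{\annihec}}=\bdermod{\annihec}$.

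Next I would transport this statement up to $\bdermod{\unihec}$. Since every $\annihec$-module is a $\unihec$-module and $\bdermod{\annihec}$ is closed under direct summands in $\dermod{\unihec}$, the category $\bdermod{\annihec}$ is a full triangulated subcategory of $\bdermod{\unihec}$ closed under direct summands and containing $\annigen$; therefore $\langle\annigen\rangle_{\bdermod{\unihec}}\subseteq\bdermod{\annihec}$. For the reverse inclusion, intersecting $\langle\annigen\rangle_{\bdermod{\unihec}}$ with $\bdermod{\annihec}$ produces a full triangulated subcategory of $\bdermod{\annihec}$ closed under summands and containing $\annigen$, hence containing $\langle\annigen\rangle_{\bdermod{\annihec}}=\bdermod{\annihec}$. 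Thus $\langle\annigen\rangle_{\bdermod{\unihec}}=\bdermod{\annihec}$. Applying $\langle-\rangle_{\bdermod{\unihec}}$ once more, using idempotence of classical generation, and then invoking \cref{anni to uni}, I get
\[
\langle\annigen\rangle_{\bdermod{\unihec}}=\langle\langle\annigen\rangle_{\bdermod{\unihec}}\rangle_{\bdermod{\unihec}}=\langle\bdermod{\annihec}\rangle_{\bdermod{\unihec}}=\bdermod{\unihec},
\]
which is the assertion.

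The argument is essentially formal; the only point requiring care is bookkeeping about which ambient triangulated category each generation operation is performed in, together with verifying at each stage that the relevant subcategory is full, triangulated, and closed under direct summands, so that these operations are mutually compatible. All the substantive inputs — finite global dimension of $\schglnco$, hence of $\annihec$, feeding \cref{mod is proj}; Noetherianity feeding \cref{anni to uni}; and Vignéras's progenerator — have already been established, so I do not anticipate a genuine obstacle here.
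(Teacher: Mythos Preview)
The proposal is correct and follows essentially the same approach as the paper: use that a progenerator classically generates $\per(\annihec)$, invoke \cref{mod is proj} to identify this with $\bdermod{\annihec}$, and then apply \cref{anni to uni} to pass to $\bdermod{\unihec}$. Your version is simply more explicit than the paper's about the bookkeeping of ambient categories in which the generation operations take place.
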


\begin{proof}
    By \cref{Vigneras generator}, $\annigen$ is a progenerator for $\anniblock$. Hence \\ $\per(\annihec)=\langle \annigen\rangle_{\per(\annihec)}$. Thus we are done by \cref{mod is proj} and \cref{anni to uni}.
\end{proof}

\begin{lemma}\label{induced finite generators are derived equivalent}
    $\langle \phorind_{\fingp,\maxcomp}^{\padicgp}(\fannig)\rangle_{\bdermod{\unihec}}=\langle \ourgen\rangle_{\bdermod{\unihec}}$.
\end{lemma}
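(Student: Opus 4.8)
The plan is to apply the exact functor $\phorind_{\fingp,\maxcomp}^{\padicgp}$ to the short exact sequence of \cref{finite generators are derived equivalent} and then read off the generation statement from the resulting distinguished triangle. First I would record that parahoric induction $\phorind_{\fingp,\maxcomp}^{\padicgp}=\ind_{\maxcomp}^{\padicgp}\infl_{\fingp}^{\maxcomp}$ is additive and exact: inflation along the surjection $\maxcomp\to\fingp$ is exact, and compact induction from the compact open subgroup $\maxcomp$ is exact on smooth representations. Applying it to $0\to\rad(P_1)\to P_1\to\triv\to 0$ therefore yields a short exact sequence in $\smrep$ whose three terms I will call $A=\phorind_{\fingp,\maxcomp}^{\padicgp}\rad(P_1)$, $B=\phorind_{\fingp,\maxcomp}^{\padicgp}P_1$, and $C=\phorind_{\fingp,\maxcomp}^{\padicgp}\triv$.

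Next I would identify these three objects with the relevant direct summands. By additivity of $\phorind_{\fingp,\maxcomp}^{\padicgp}$ together with $\fourg=\triv\oplus P_1$ and $\fannig\cong P_1\oplus\rad(P_1)$, one has $\ourgen\cong C\oplus B$ and $\phorind_{\fingp,\maxcomp}^{\padicgp}\fannig\cong B\oplus A$. Each of $A$, $B$, $C$ is thus a direct summand of one of these two representations, both of which are finitely generated and lie in $\uniblock$ by \cref{our gen is in the unipotent}; since $\bdermod{\unihec}$ is a full triangulated subcategory of $\dermod{\unihec}$ closed under direct summands, it follows that $A$, $B$, $C$ lie in $\bdermod{\unihec}$, and the short exact sequence above becomes a distinguished triangle $A\to B\to C\to A[1]$ there.

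Finally I would run the formal argument. In the triangle $A\to B\to C\to A[1]$, the object $C$ is the cone of $A\to B$ and $A[1]$ is the cone of $B\to C$. As $\langle\ourgen\rangle_{\bdermod{\unihec}}$ is a thick subcategory containing the summands $B$ and $C$ of $\ourgen$, it contains the cone $A[1]$ of $B\to C$, hence $A$, hence $B\oplus A\cong\phorind_{\fingp,\maxcomp}^{\padicgp}\fannig$, giving one inclusion. Symmetrically, $\langle\phorind_{\fingp,\maxcomp}^{\padicgp}\fannig\rangle_{\bdermod{\unihec}}$ contains the summands $B$ and $A$, hence the cone $C$ of $A\to B$, hence $C\oplus B\cong\ourgen$, giving the reverse inclusion; together these yield the equality.

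The argument is essentially formal once \cref{finite generators are derived equivalent} is available, so I do not expect a serious obstacle; the only points needing care are the exactness of $\phorind_{\fingp,\maxcomp}^{\padicgp}$ (so that the finite short exact sequence lifts to a genuine short exact sequence in $\smrep$, and not merely a triangle with an uncontrolled third term) and the verification that $A$, $B$, $C$ each land in the bounded derived category of the unipotent block — which is exactly where \cref{our gen is in the unipotent} and the closure of $\bdermod{\unihec}$ under direct summands are used.
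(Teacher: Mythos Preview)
Your proposal is correct and follows essentially the same approach as the paper: apply the exact functor $\phorind_{\fingp,\maxcomp}^{\padicgp}$ to the short exact sequence of \cref{finite generators are derived equivalent}, then use the resulting distinguished triangle together with closure under summands to see that each of $\ourgen$ and $\phorind_{\fingp,\maxcomp}^{\padicgp}\fannig$ classically generates the other. The paper's argument is terser---it simply invokes exactness of parahoric induction and \cref{our gen is in the unipotent}---but your more explicit handling of the membership of $A$, $B$, $C$ in $\bdermod{\unihec}$ and the triangle mechanics fills in exactly the details the paper leaves implicit.
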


\begin{proof}
    By \cref{our gen is in the unipotent}, both $\ourgen$ and $\phorind_{\fingp,\maxcomp}^{\padicgp}(\fannig)$ are objects of $\bdermod{\unihec}$.

    By \cref{finite generators are derived equivalent}, there is a short exact sequence, two entries of which are the direct summands of $\fourg$, and two of which are the direct summands of $\fannig$. But as parahoric induction is exact, we thus get an analogous statement for $\phorind_{\fingp,\maxcomp}^{\padicgp}(\fannig)$ and $\ourgen=\phorind_{\fingp,\maxcomp}^{\padicgp}(\fourg)$. Thus they each classically generate the other.
\end{proof}

\begin{theorem}
    $\bdermod{\unihec}=\langle \ourgen\rangle_{\bdermod{\unihec}}$
\end{theorem}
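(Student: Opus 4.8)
The plan is to combine \cref{vigneras generator derived generates} and \cref{induced finite generators are derived equivalent} and to reduce the statement to a question about modules rather than complexes. By \cref{vigneras generator derived generates}, $\bdermod{\unihec}=\langle\annigen\rangle_{\bdermod{\unihec}}$, and by \cref{induced finite generators are derived equivalent}, $\langle\phorind_{\fingp,\maxcomp}^{\padicgp}\fannig\rangle_{\bdermod{\unihec}}=\langle\ourgen\rangle_{\bdermod{\unihec}}$; since $\phorind_{\fingp,\maxcomp}^{\padicgp}\fannig$ is a finitely generated unipotent module (\cref{our gen is in the unipotent}) we have $\langle\phorind_{\fingp,\maxcomp}^{\padicgp}\fannig\rangle\subseteq\bdermod{\unihec}=\langle\annigen\rangle$ for free, so it suffices to prove $\annigen\in\langle\phorind_{\fingp,\maxcomp}^{\padicgp}\fannig\rangle$. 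I would obtain this from the sharper claim that the surjection $\phorind_{\fingp,\maxcomp}^{\padicgp}\fannig\twoheadrightarrow\annigen$ furnished by \cref{annigen quotient of fingen} is in fact an isomorphism; granting that, \cref{induced finite generators are derived equivalent} gives $\langle\annigen\rangle=\langle\ourgen\rangle$ and \cref{vigneras generator derived generates} finishes the proof.

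To prove the surjection is an isomorphism I would compute its kernel and show it vanishes. By \cref{formula for induced finite generator} and \cref{formula for generator} the map is the canonical quotient $\phorind_{\fingp,\maxcomp}^{\padicgp}\funig/\hecglob\finanni\,\phorind_{\fingp,\maxcomp}^{\padicgp}\funig\twoheadrightarrow\phorind_{\fingp,\maxcomp}^{\padicgp}\funig/\annihilator\,\phorind_{\fingp,\maxcomp}^{\padicgp}\funig$, with kernel $\annihilator\,\phorind_{\fingp,\maxcomp}^{\padicgp}\funig/\hecglob\finanni\,\phorind_{\fingp,\maxcomp}^{\padicgp}\funig$ (recall $\hecglob\finanni\subseteq\annihilator$ by \cref{finite annihilator lives in annihilator}). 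Using the results of \cref{finite block}, decompose $\funig$ as a $\hecfin$-module as $P_1\oplus P_2\oplus N$, where $N$ is projective and supported off the unipotent block ($\ind_{\finuni}^{\fingp}1$ contributes the single copy of $P_1$ and $\ind_{\finuni}^{\fingp}\chi$ the single copy of $P_2$). Since parahoric induction is additive and $\phorind_{\fingp,\maxcomp}^{\padicgp}P_1=\phorind_{\fingp,\maxcomp}^{\padicgp}\finproj=\project$, we get $\phorind_{\fingp,\maxcomp}^{\padicgp}\funig=\project\oplus\phorind_{\fingp,\maxcomp}^{\padicgp}P_2\oplus\phorind_{\fingp,\maxcomp}^{\padicgp}N$, and the kernel splits accordingly. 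On $\project$ both $\annihilator$ and $\hecglob\finanni$ act by zero, by the very definition of $\annihilator$ as the annihilator of $\project$. On $\phorind_{\fingp,\maxcomp}^{\padicgp}N$ both act as the identity, since that module is supported on non-unipotent $p$-adic blocks (\cref{unipotent induces to unipotent}). The essential summand is $\phorind_{\fingp,\maxcomp}^{\padicgp}P_2$: there $\hecglob\finanni$ acts with image $\phorind_{\fingp,\maxcomp}^{\padicgp}(\finanni P_2)=\phorind_{\fingp,\maxcomp}^{\padicgp}(S')$ (with $S'=\finanni P_2$ as identified in \cref{finite block}), while applying the exact functor $\phorind_{\fingp,\maxcomp}^{\padicgp}$ to the short exact sequence $0\to\rad(P_1)\to P_1\to\triv\to0$ of \cref{finite generators are derived equivalent} exhibits $\phorind_{\fingp,\maxcomp}^{\padicgp}P_2/\phorind_{\fingp,\maxcomp}^{\padicgp}(S')\cong\phorind_{\fingp,\maxcomp}^{\padicgp}\rad(P_1)$ as a submodule of $\phorind_{\fingp,\maxcomp}^{\padicgp}P_1=\project$, hence as a module killed by $\annihilator$. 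Thus $\annihilator\,\phorind_{\fingp,\maxcomp}^{\padicgp}P_2\subseteq\phorind_{\fingp,\maxcomp}^{\padicgp}(S')$, and together with $\phorind_{\fingp,\maxcomp}^{\padicgp}(S')=\hecglob\finanni\,\phorind_{\fingp,\maxcomp}^{\padicgp}P_2\subseteq\annihilator\,\phorind_{\fingp,\maxcomp}^{\padicgp}P_2$ this forces equality, so the corresponding summand of the kernel is zero. Hence the kernel is zero.

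The main obstacle is exactly this last equality $\annihilator\,\phorind_{\fingp,\maxcomp}^{\padicgp}P_2=\phorind_{\fingp,\maxcomp}^{\padicgp}(\finanni P_2)$: it asserts that on this induced summand the a priori very large ideal $\annihilator$ acts no more destructively than the small ideal generated by $\finanni$, and it depends on having both the explicit description of $\finanni$ from \cref{finite block} together with \cref{finite annihilator lives in annihilator}, and the simple observation that $\annihilator$ annihilates $\project$ while $\phorind_{\fingp,\maxcomp}^{\padicgp}\rad(P_1)$ embeds in $\project$. One should also check the bookkeeping identities of the form $\hecglob\finanni\,\phorind_{\fingp,\maxcomp}^{\padicgp}M=\phorind_{\fingp,\maxcomp}^{\padicgp}(\finanni M)$ for $\hecfin$-modules $M$ used above, which are the associativity-of-tensor manipulations already carried out in \cref{formula for induced finite generator}, and that the direct sum decomposition of $\phorind_{\fingp,\maxcomp}^{\padicgp}\funig$ is preserved by the two quotient maps so that the kernel genuinely splits.
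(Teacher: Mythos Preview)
Your argument is correct and in fact proves something stronger than the paper does: you establish that the surjection $\phorind_{\fingp,\maxcomp}^{\padicgp}\fannig\twoheadrightarrow\annigen$ of \cref{annigen quotient of fingen} is an isomorphism. The paper takes a shorter route at this point: since $\annigen$ is a progenerator of $\anniblock$ (\cref{Vigneras generator}) it is projective there, and since $\phorind_{\fingp,\maxcomp}^{\padicgp}\fannig=\project\oplus\phorind_{\fingp,\maxcomp}^{\padicgp}\rad(P_1)$ already lies in $\anniblock$ (both summands embed in $\project$, hence are killed by $\annihilator$), the surjection splits and $\annigen$ is a direct summand. Your summand-by-summand kernel computation trades this one-line splitting argument for an explicit identification $\annigen\cong\phorind_{\fingp,\maxcomp}^{\padicgp}\fannig$; the key step $\annihilator\cdot\phorind_{\fingp,\maxcomp}^{\padicgp}P_2=\phorind_{\fingp,\maxcomp}^{\padicgp}(\finanni P_2)$ goes through exactly as you outline.

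One point to tighten: the identity $\hecglob\finanni\cdot\phorind_{\fingp,\maxcomp}^{\padicgp}M=\phorind_{\fingp,\maxcomp}^{\padicgp}(\finanni M)$ you call a bookkeeping consequence of \cref{formula for induced finite generator} is not quite that. The formula identifies $\phorind_{\fingp,\maxcomp}^{\padicgp}\fannig$ with $(\hecglob/\hecglob\finanni)\otimes_{\heccomp}\funig$, so the kernel of the surjection is the image of $\hecglob\finanni\otimes_{\heccomp}\funig$, which is $\phorind_{\fingp,\maxcomp}^{\padicgp}(\finanni\funig)$ by exactness of $\phorind_{\fingp,\maxcomp}^{\padicgp}$; this is a priori only contained in $\hecglob\finanni\cdot\phorind_{\fingp,\maxcomp}^{\padicgp}\funig$ (the latter involves the two-sided ideal $\hecglob\finanni\hecglob$). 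Fortunately your proof only needs the inclusion $\phorind_{\fingp,\maxcomp}^{\padicgp}(\finanni P_2)\subseteq\annihilator\cdot\phorind_{\fingp,\maxcomp}^{\padicgp}P_2$, which follows directly from $\finanni\subseteq\annihilator$ and the elementary identity $\phi_{fm}=f\cdot\phi_m$ for $f\in\finanni$. So the argument stands once you phrase the kernel as $\annihilator\,\phorind_{\fingp,\maxcomp}^{\padicgp}\funig/\phorind_{\fingp,\maxcomp}^{\padicgp}(\finanni\funig)$.
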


\begin{proof}
    We already know $\ourgen\in\bdermod{\unihec}$ from \cref{our gen is in the unipotent}, and from \cref{vigneras generator derived generates} that $\bdermod{\unihec}=\langle \annigen\rangle_{\bdermod{\unihec}}$, so it suffices to prove that $\annigen\in\langle \ourgen\rangle_{\bdermod{\unihec}}$. But we also know from \cref{induced finite generators are derived equivalent} that $\langle \phorind_{\fingp,\maxcomp}^{\padicgp}(\fannig)\rangle_{\bdermod{\unihec}}=\langle \ourgen\rangle_{\bdermod{\unihec}}$. Hence we only need to show $\annigen\in\langle\phorind_{\fingp,\maxcomp}^{\padicgp}(\fannig)\rangle_{\bdermod{\unihec}}$. But by \cref{annigen induced from fingen} we have that $\annigen\cong\phorind_{\fingp,\maxcomp}^{\padicgp}(\fannig)$.
\end{proof}

\begin{corollary}
    Let $\gencx$ be a projective resolution of $\ourgen$ in $\smrep$. There is a triangulated equivalence $\bdermod{\unihec}\simeq\per(\dgend(\gencx))$.
\end{corollary}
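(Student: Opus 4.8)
The plan is to deduce the statement at once from \cref{triangulated equivalence}. I would apply that theorem with $A=\unihec$, so that $A\modcat$ is the unipotent block, with $\mathcal{T}=\bdermod{\unihec}$, with $M=\ourgen$, and with $\cx{M}=\gencx$ the given projective resolution. Thus the proof is not a construction but a verification that the three hypotheses of \cref{triangulated equivalence} are met here.

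That verification is routine. First, $\bdermod{\unihec}$ is a full triangulated subcategory of $\dermod{\unihec}$ closed under direct summands, as recorded in \cref{derived categories}. Second, $M=\ourgen$ must be an honest module lying in $\mathcal{T}$: by \cref{our gen is in the unipotent} it is finitely generated and unipotent, so, placed in degree $0$, it is an object of $\bdermod{\unihec}$. Third, $\gencx$ must be a projective resolution of $\ourgen$ in $\unihec\modcat$; it is given as a projective resolution in $\smrep$, but since the unipotent block is a direct factor of $\smrep$, every term of $\gencx$ is a unipotent representation, hence a projective of $\unihec\modcat$, so $\gencx$ is equally a projective resolution there. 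With these in hand, \cref{triangulated equivalence} yields a triangulated equivalence $\mathcal{T}\simeq\per(\dgend(\gencx))$, that is, $\bdermod{\unihec}\simeq\per(\dgend(\gencx))$.

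I do not expect a genuine obstacle at this stage: the substantive input — that $\ourgen$ classically generates $\bdermod{\unihec}$, which is exactly what lets \cref{triangulated equivalence} be applied with $\mathcal{T}$ equal to the \emph{whole} bounded derived category rather than merely the thick subcategory generated by $\ourgen$ — is precisely the theorem established immediately above. The only point needing any care is the remark just made, that the given resolution in $\smrep$ may be taken to lie in the unipotent block; this is immediate from the block being a direct summand of $\smrep$ together with projectivity passing to direct summands. Everything else is formal, and no further calculation is required.
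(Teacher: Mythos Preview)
Your proposal is correct and follows exactly the paper's approach: the paper's proof is the single sentence ``This follows applying \cref{triangulated equivalence} to the category $\bdermod{\unihec}$ and its classical generator $\ourgen$.'' You have simply unpacked the hypothesis-checking in more detail, and your remark that a projective resolution in $\smrep$ automatically lies in the unipotent block (since the block is a direct factor) is a small point of care that the paper leaves implicit.
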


\begin{proof}
    This follows applying \cref{triangulated equivalence} to the category $\bdermod{\unihec}$ and its classical generator $\ourgen$.
\end{proof}

\printbibliography

\end{document}